\documentclass[10pt]{amsart}

\usepackage{amssymb,amsthm,amsfonts,latexsym}
\usepackage{amsmath}
\usepackage{mathrsfs}
\usepackage{stmaryrd}
\usepackage{accents}
\usepackage[utf8]{inputenc}
\usepackage[]{hyperref}
\usepackage{color}
\usepackage{enumitem}
\usepackage{appendix}
\usepackage{tabularx}

\setlength{\textwidth}{15cm}
\setlength{\topmargin}{0cm}
\setlength{\oddsidemargin}{.5cm}
\setlength{\evensidemargin}{.5cm}
\setlength{\textheight}{21.5cm}
\allowdisplaybreaks

 
\theoremstyle{plain}
\newtheorem{theorem}{Theorem}[section]
\newtheorem*{theorem*}{Theorem}
\newtheorem{lemma}[theorem]{Lemma}
\newtheorem{corollary}[theorem]{Corollary}
\newtheorem*{corollary*}{Corollary}
\newtheorem{proposition}[theorem]{Proposition}

\theoremstyle{definition}

\newtheorem{definition}[theorem]{Definition}
\newtheorem{hypothesis}[theorem]{Hypothesis}

\theoremstyle{remark}
\newtheorem{remark}[theorem]{Remark}

\theoremstyle{plain}

\providecommand{\customgenericname}{}
\newcommand{\newcustomtheorem}[2]{%
  \newenvironment{#1}[1]
  {%
   \renewcommand\customgenericname{#2}%
   \renewcommand\theinnercustomgeneric{##1}%
   \innercustomgeneric
  }
  {\endinnercustomgeneric}
}

\newcustomtheorem{theoremNum}{Theorem}
\newcustomtheorem{lemmaNum}{Lemma}
\newcustomtheorem{corollaryNum}{Corollary}

\numberwithin{equation}{section}

 \def\QQ{\mathbb{Q}}
 \renewcommand\AA{{\mathbb{A}}}
 \renewcommand\SS{{\mathbb{S}}}
 \newcommand\ZZ{{\mathbb{Z}}}
 
 \def\A{\mathcal{A}}

 \def\F{{\mathcal F}}

 \def\K{{\mathcal K}}

 \def\N{{\mathcal N}}

 \def\S{{\mathcal S}}

 \def\Id{{\text{Id}}}
 
 \def\id{{\text{id}}}

 \def\PGU{{\text{PGU}}}
 
 \def\McL{{\text{McL}}}
 \def\Fi{{\text{Fi}}}

 \def\Aut{{\text{Aut}}}
 
 \def\Out{{\text{Out}}}
 \def\Inn{{\text{Inn}}}
 \def\Inndiag{{\text{Inn diag}}}
 \def\Outdiag{{\text{Out diag}}}

 \newcommand{\QC}{{\text{($\QQ$-QC)}}}
 \newcommand{\ZQC}{{($\ZZ$-QC)}}
 \newcommand{\OQC}{{(QC)}}

 \def\Sz{\text{Sz}} 
 \def\Ree{\text{Ree}}

\def\groupiso{\cong}
\def\homotequiv{\simeq}

\newcommand{\tq}{\mathrel{{\ensuremath{\: : \: }}}}

\newcommand\gen[1]{\left\langle#1\right\rangle}

   
      \makeatletter
      \def\@setcopyright{}
      \def\serieslogo@{}
      \makeatother
   

\usepackage{environ}

\newcounter{quote}
\renewcommand{\thequote}{(H\arabic{quote})}

\NewEnviron{myquote}{\vspace{1ex}\par
\refstepcounter{quote}%
\hfill\llap{\thequote}\hfill\parbox{\dimexpr \textwidth-2cm}%
{\normalsize{{\BODY}}}%
\hfill\vspace{1ex}\par}

\begin{document}

\title [An approach to Quillen's conjecture via centralizers of simple groups]{An approach to Quillen's conjecture via centralizers of simple groups}
   \author{Kevin Iv\'an Piterman}
   \address{Departamento de Matem\'atica \\IMAS-CONICET\\
 FCEyN, Universidad de Buenos Aires. Buenos Aires, Argentina.}
\email{kpiterman@dm.uba.ar}

\thanks{Supported by a CONICET postdoctoral fellowship and grants PIP 11220170100357, PICT 2017-2997, and UBACYT 20020160100081BA}

   \begin{abstract}
   We show that, for any given subgroup $H$ of a finite group $G$, the Quillen poset $\A_p(G)$ of nontrivial elementary abelian $p$-subgroups, is obtained from $\A_p(H)$ by attaching elements via their centralizers in $H$.
   We use this idea to study Quillen's conjecture, which asserts that if $\A_p(G)$ is contractible then $G$ has a nontrivial normal $p$-subgroup.
   We prove that the original conjecture is equivalent to the $\ZZ$-acyclic version of the conjecture (obtained by replacing contractible by $\ZZ$-acyclic).
   We also work with the $\QQ$-acyclic (strong) version of the conjecture, reducing its study to extensions of direct products of simple groups of order divisible by $p$ and $p$-rank at least $2$.
   This allows us to extend results of Aschbacher-Smith and to establish the strong conjecture for groups of $p$-rank at most $4$.
   \end{abstract}

\subjclass[2010]{20J05, 20D05, 20D25, 20D30, 05E18, 06A11.}

\keywords{$p$-subgroups, Quillen's conjecture, posets, finite groups.}

\maketitle

\section{Introduction}

Given a finite group $G$ and a prime number $p$ dividing its order, let $\A_p(G)$ be the Quillen poset of nontrivial elementary abelian $p$-subgroups of $G$.
We can study the homotopical properties of $\A_p(G)$ by means of its order complex.
In \cite{Qui78}, Quillen proved that $\A_p(G)$ is contractible if $G$ has a nontrivial normal $p$-subgroup.
He also conjectured the converse giving rise to the well-known \textit{Quillen conjecture}.
That is, if $\A_p(G)$ is contractible then $G$ has a nontrivial normal $p$-subgroup.
Equivalently, if $G$ has no nontrivial normal $p$-subgroup, then $\A_p(G)$ is not contractible.
This conjecture has been widely studied during the past decades, but it remains open so far.

In this article, we consider the following versions of the conjecture.
Let $O_p(G)$ be the largest normal $p$-subgroup of $G$, and $\tilde{H}_*(X,R)$ the reduced homology of a finite poset $X$ (which is the homology of its order complex $\K(X)$), with coefficients in the ring $R$.

\vspace{0.2cm}

\begin{tabular}{cl}
\OQC & If $O_p(G) = 1$ then $\A_p(G)$ is not contractible.\\
\ZQC & If $O_p(G) = 1$ then $\tilde{H}_*(\A_p(G),\ZZ) \neq 0$.\\
\QC & If $O_p(G) = 1$ then $\tilde{H}_*(\A_p(G),\QQ) \neq 0$.
\end{tabular}

\vspace{0.2cm}


Note that \QC{} implies \ZQC, which implies the original conjecture \OQC.

The most important advances on the conjecture were achieved on the stronger version \QC.
Quillen established \QC{} for solvable groups, groups of $p$-rank at most $2$ and some families of groups of Lie type \cite{Qui78}. 
Later, various authors dealt with the $p$-solvable case (see \cite{Alperin,Diaz} and \cite[Ch.8]{Smi11}) and in \cite{AK90} Aschbacher and Kleidman showed \QC{} for almost simple groups.
In \cite{AS93}, Aschbacher and Smith proved $\QC$ for $p > 5$, under certain restrictions on the unitary components.
They strongly used the classification of finite simple groups.
Recently, in a collaboration work with Sadofschi Costa and Viruel \cite{PSV}, we proved new cases of the conjecture, not included in the previous results.
In \cite{PSV}, we worked with the integer version of the conjecture \ZQC{} and proved that it holds if $\K(\S_p(G))$ contains a $2$-dimensional and $G$-invariant subcomplex homotopy equivalent to itself.
Recall that $\S_p(G)$ is the Brown poset of nontrivial $p$-subgroups of $G$ and that $\A_p(G)\hookrightarrow \S_p(G)$ is a homotopy equivalence (see \cite[Proposition 2.1]{Qui78}).
In particular, the integer version holds for groups of $p$-rank at most $3$.
Recall that the $p$-rank of $G$ is the dimension of $\K(\A_p(G))$ plus one.

Further applications and results concerning the homotopy type of the $p$-subgroup complexes can be found in \cite{Brown,Gro1,Gro2,PW,Smi11}.
In \cite[(1.4)]{Moller}, the authors considered a version of the conjecture even stronger than the rational one \QC{}: if $O_p(G) = 1$ then the Euler characteristic of $\A_p(G)$ is not $1$.
We will not work with this version.

In this article, we approach the study of Quillen's conjecture via the examination of the centralizers of the elementary abelian $p$-subgroups on suitable subgroups.
Roughly, if $H$ is a nontrivial subgroup of $G$, we show that $\A_p(G)$ can be obtained first by passing from $\A_p(H)$ to a homotopy equivalent superset $\N(H)$, namely members $E\in\A_p(G)$ with $E\cap H\neq 1$, and then from $\N(H)$ to $\A_p(G)$ by attaching the remaining subgroups throughout their link in $\N(H)$.
If $E\in \A_p(G)$ and $E\cap H = 1$, its link in $\N(H)$ is $\A_p(C_H(E))$, where $C_H(E)$ is the centralizer of $E$ in $H$.
We can understand the homotopy type of $\A_p(G)$ from that of $\A_p(H)$ and the structure of these centralizers.
In some cases, we extract points $E\in \A_p(G)$ with contractible link in $\N(H)$, and this is guaranteed precisely when $O_p(C_H(E))\neq 1$.
In this way, we can work with smaller subposets and apply inductive arguments.
This approach has its roots in the previous work with E.G. Minian on the fundamental group of these complexes \cite{MP19}.
Some of these constructions were also considered by Segev and Webb \cite{Segev,SW}.
In our article, we put more emphasis on the attachment process and extraction of points, which seems to have been barely exploited.

We will study \ZQC{} and \QC{} by using the idea described above and working under the following \textit{inductive}
assumption.
Let $R = \ZZ$ or $\QQ$.

\begin{quote}
(H1)$_R$ \quad Proper subgroups and proper central quotients of $G$ satisfy ($R$-QC).
\end{quote}

By a proper central quotient of $G$ we mean a quotient of $G$ by a nontrivial central subgroup $Z\leq Z(G)$ (here $Z(G)$ denotes the center of $G$).
The hypothesis of the central quotients is motivated by the fact that if $Z\leq Z(G)$ is a $p'$-group, then $\A_p(G)$ is naturally isomorphic to $\A_p(G/Z)$ (see Proposition \ref{propCenterReduction}).

The inductive assumption (H1)$_R$ is valid in the context of a counterexample of minimal order to the conjecture.
That is, if $H$ satisfies ($R$-QC) for all $|H| < |G|$, then (H1)$_R$ holds for $G$.
Therefore, the reader may replace the content of (H1)$_R$ by this stronger inductive requirement.

Under certain extra hypothesis (H2) on $G$ which does not depend on the ring $R$, we will establish ($R$-QC) for $G$.
That is, we will show that for suitable (H2), (H1)$_R$ + (H2) implies ($R$-QC).
Theorem \ref{mainTheorem} is the first result with this format.
The motivation for this theorem comes from \cite[Proposition 1.6]{AS93}, which shows that $O_{p'}(G)=1$ if $G$ is a counterexample of minimal order to \QC{} and $p> 5$.
Recall that $O_{p'}(G)$ is the largest normal $p'$-subgroup of $G$.
Theorem \ref{mainTheorem} is intended to be a variation of \cite[Proposition 1.6]{AS93} designated to deal with the same reduction in a more general context, without further restrictions.

\begin{theoremNum}{1}\label{mainTheorem}
Let $G$ be a finite group and $p$ a prime number dividing its order.
Let $R = \ZZ$ or $\QQ$.
Suppose that (H1)$_R$ holds for $G$ and that:
\begin{quote}
(H2) \quad $O_{p'}(G) \neq 1$.
\end{quote}
Then $G$ satisfies ($R$-QC).

In particular, a counterexample of minimal order $G$ to (R-QC) has $O_{p'}(G) = 1$.
\end{theoremNum}

Our theorem has no restriction on the prime $p$ and it uses the classification of simple groups to a much lesser extent than the analogous result \cite[Proposition 1.6]{AS93}, which is stated for $p > 5$.
Moreover, our version is stated for both coefficient rings $\ZZ$ and $\QQ$, while the original result of Aschbacher-Smith is stated only for rational coefficients.
The proof of the integer version of Theorem \ref{mainTheorem},  requires an explicit description of a nontrivial cycle in the homology for the $p$-solvable case of the conjecture.
To that end, we employ the characterizations of D\'iaz Ramos \cite{Diaz}.

In combination with the results on the fundamental group \cite{MP19}, we conclude that the original conjecture \OQC{} and the integer homology version \ZQC{} are equivalent.

\begin{theoremNum}{2}\label{mainEquivalentConjectures}
The original Quillen's conjecture and the integer Quillen's conjecture \ZQC{} are equivalent.
That is, \OQC{} holds for all finite groups if and only if \ZQC{} holds for all finite groups.
\end{theoremNum}

We also show that certain components in the groups allows us to propagate homology to the Quillen poset, hence establishing the conjecture.
We prove the following theorem, which also has the format of (H1)$_R$ plus an extra hypothesis (H2), and concludes ($R$-QC).
We follow the notation of \cite{AS93,GL83} for the simple groups.

\begin{theoremNum}{3}\label{mainComponentsTheorem}
Let $R=\ZZ$ or $\QQ$.
Suppose that (H1)$_R$ holds for $G$ and that:
\begin{quote}
(H2) \quad $G$ has a component $L$ such that either $L\groupiso U_3(2^3)$ and $p = 3$, or else $L/Z(L)$ has $p$-rank $1$.
\end{quote}
Then $G$ satisfies ($R$-QC).
\end{theoremNum}

In the proof of this theorem, we study the structure of the centralizers of outer automorphisms of order $p$ in the simple component $L$ of the group.
Then we pass through the inflated subposet $\N(H)$ with $H = (LA)C_G(LA)$ for suitable $A\in\A_p(G)$ normalizing $L$, and propagate homology from $\N(H)$ to $\A_p(G)$.
On the other hand, Theorem \ref{mainComponentsTheorem} handles the groups containing a component isomorphic to $L_2(2^3)$ ($p = 3$), $U_3(2^3)$ ($p = 3$) or $\Sz(2^5)$ ($p = 5$), which were excluded by Aschbacher-Smith during  the analysis of the conjecture for odd $p$ (see Section \ref{sectionSmallCases} for a more detailed discussion).
Nevertheless, our theorem shows that their proof of Quillen's conjecture works in the same way without need of these restrictions on the components of the groups.
This allows us to extend the main result of Aschbacher-Smith to $p = 5$.
The extension of \cite[Main Theorem]{AS93} to $p = 3$ is not immediate since its proof depends on \cite[Theorem 5.3]{AS93}, which is stated for $p \geq 5$.

\begin{corollaryNum}{4}\label{mainExtensionToP5AS}
Theorem \cite[Main Theorem]{AS93} also holds for $p = 5$.
\end{corollaryNum}

We combine Theorem \ref{mainComponentsTheorem} and the results on the fundamental group \cite{MP19}, with the classification of the simple groups of low $p$-rank, the structure of their centralizers and the classification of groups with a strongly $p$-embedded subgroup (i.e. with disconnected Quillen's complex), to yield the $p$-rank $4$ case of the conjecture.

\begin{theoremNum}{5}\label{mainCoroPRank4}
If $G$ has $p$-rank at most $4$, then it satisfies \QC.
\end{theoremNum}

Finally, we deduce the following corollary in the context of a minimal counterexample to the conjecture, 
by the properties of the generalized Fitting subgroup and the automorphism group of a direct product of simple groups (see Remark \ref{remarkFittingDirectProduct}).
Let $R = \ZZ$ or $\QQ$.

\begin{corollaryNum}{6}\label{mainCorollary}
If $G$ is a minimal counterexample to ($R$-QC), then $G$ has $p$-rank at least $5$ and there exist simple groups $L_1,\ldots, L_r$ of $p$-rank at least $2$ and positive integers $n_1,\ldots, n_r$ such that \[L_1^{n_1}\times \ldots \times L_r^{n_r}\leq G\leq \prod_{i=1}^r \Aut(L_i) \wr \SS_{n_i}.\]
\end{corollaryNum}

Here, $\SS_{n}$ is the symmetric group on $n$ letters and $H\wr \SS_n$ denotes the wreath product of the natural permutation action of $\SS_n$ on the set $\{1,\ldots,n\}$.

\vspace{0.1cm}

\textbf{Acknowledgements.} I would like to thank El\'ias Gabriel Minian for his valuable suggestions that helped me to improve the presentation of this article.
I am also very grateful to Stephen D. Smith for his careful reading of this article and all of his generous and constructive comments which significantly improved the presentation.
In particular, for many suggestions on the format of Section \ref{sectionHomologyPropagation}, Theorem \ref{mainTheorem} and the consequences of these methods.

\section{Preliminary results}

In this section we establish the main definitions and tools that we will use throughout the paper.
We refer to \cite{AscFGT} for more details on finite group theory.

All the groups considered here are finite.
By a simple group we will mean a non-abelian simple group.
We adopt the conventions of \cite{GL83} for the names of the simple groups and their automorphisms.

Denote by $Z(G)$ the center of $G$.
Let $O_p(G)$ be the largest normal $p$-subgroup of $G$ and $O_{p'}(G)$ the largest normal $p'$-subgroup of $G$.
The Fitting subgroup $F(G)$ of $G$ is the largest normal nilpotent subgroup of $G$, and it is the direct product of the subgroups $O_p(G)$, for $p$ dividing the order of $G$.
For a fixed prime $p$, $\Omega_1(G) := \gen{x\in G:x^p = 1}$.
The $p$-rank of $G$ is
\[m_p(G) := 1 + \dim \K(\A_p(G)) = \max\{\log_p(|A|) \tq A\in \A_p(G)\cup \{1\}\}.\]

If $H,K\leq G$ are subgroups of $G$, then $N_H(K)$ denotes the normalizer of $K$ in $H$ and $C_H(K)$ the centralizer of $K$ in $H$.
Denote by $[H,K]$ the subgroup generated by the commutators between elements of $H$ and $K$.
If $g\in G$, write $H^g = g^{-1}Hg$.

A \textit{component} of $G$ is a subnormal quasisimple subgroup of $G$, i.e. a subgroup $L\leq G$ which is subnormal, perfect and $L/Z(L)$ is simple.
Two distinct components of $G$ commute.
The \textit{layer} of $G$, denoted by $E(G)$, is the (central) product of the components of $G$.
The \textit{generalized Fitting subgroup} of $G$ is $F^*(G) = F(G)E(G)$.
It can be shown that $[F(G),E(G)] = 1$ and that $F(G)\cap E(G) = Z(E(G))$, which is the product of the centres of the components of $G$.
The key property of this characteristic subgroup is that it is self-centralizing, i.e. $C_G(F^*(G)) = Z(F^*(G)) = Z(F(G))$.
An \textit{almost simple group} is a finite group $G$ such that $F^*(G)$ is a simple group.
Equivalently, $L\leq G\leq \Aut(L)$, where $L = F^* (G)$ is simple.
See \cite[Chapter 11]{AscFGT} for further details.

Recall that $\Out(H) = \Aut(H)/\Inn(H)$.
If $H$ is a group of Lie-type, $\Inndiag(H)$ denotes the subgroup of inner-diagonal automorphisms of $H$, and $\Outdiag(H) = \Inndiag(H)/\Inn(H)$.

\begin{remark}\label{remarkFittingDirectProduct}
If $O_p(G) = 1 = O_{p'}(G)$, then $F(G) \leq O_p(G)O_{p'}(G) = 1$, so $F^*(G) = E(G)$ and its center is trivial.
Therefore, $F^*(G) = L_1\ldots L_n$ is the direct product of the components $\{L_1,\ldots, L_n\}$ of $G$, which are (non-abelian) simple groups of order divisible by $P$.
By the self-centralizing property, $F^*(G)\leq G\leq \Aut(F^*(G))$.
Moreover, $\Aut(F^*(G))$ can be easily described by using the fact that if $L$ is a simple group, then $\Aut(L^n) \groupiso \Aut(L) \wr \SS_n$ and that $\Aut(L\times K) \groupiso \Aut(L)\times \Aut(K)$ if $L$ and $K$ are non-isomorphic simple groups.
\end{remark}

If $X$ is a finite poset, we can study its homotopy properties by means of its associated order complex $\K(X)$, whose simplices are the nonempty chains of $X$.
If $x\in X$ and $Y\subseteq X$ is a subposet, let $Y_{\geq x}  = \{y\in Y: y\geq x\}$.
Define analogously $Y_{>x}$, $Y_{\leq x}$ and $Y_{<x}$.
The link of $x$ in $Y$ is $Y_{<x}\cup Y_{>x}$.

Recall that if $f,g:X\to Y$ are two order preserving maps between finite posets $X$ and $Y$ such that $f\leq g$ (i.e. $f(x)\leq g(x)$ for all $x\in X$), then $f$ and $g$ are homotopic when regarded as simplicial maps.
Write $X\simeq Y$ if $\K(X)\simeq \K(Y)$.
Note that this is not the usual convention that we employed in the previous articles \cite{MP18,Pit19}.

We recall below Quillen's fiber lemma for finite posets.

\begin{proposition}[{\cite[Proposition 1.6]{Qui78}}]\label{propositionTheoremAPosets}
Let $f:X\to Y$ be an order preserving map between finite posets.
If $f^{-1}(Y_{\leq y})$ is contractible for all $y\in Y$ (resp. $f^{-1}(Y_{\geq y})$ is contractible for all $y\in Y$), then $f$ is a homotopy equivalence.
In particular, if $X \subseteq X_0$ and $X_{>x}$ is contractible for all $x\in X_0-X$ (resp. $X_{<x}$ is contractible for all $x\in X_0-X$) then $X\hookrightarrow X_0$ is a homotopy equivalence.
\end{proposition}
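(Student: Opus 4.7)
This is Quillen's classical fiber lemma for order complexes of finite posets \cite[Proposition 1.6]{Qui78}. My plan is two-step: first establish the ``In particular'' attachment clause by induction on $|X_0 \setminus X|$, then derive the general fiber statement from it via a mapping cylinder poset construction.

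For the attachment clause, in the inductive step I would choose a maximal element $x$ of $X_0 \setminus X$ and set $X_1 = X_0 \setminus \{x\}$. Maximality of $x$ in $X_0 \setminus X$ forces $(X_0)_{>x} \subseteq X$, hence $(X_0)_{>x} = X_{>x}$, which is contractible by hypothesis. The link $\Lk_{\K(X_0)}(x)$ is the simplicial join $\K((X_0)_{<x}) * \K((X_0)_{>x})$, and a join with a contractible factor is contractible. Now $\K(X_0)$ is obtained from $\K(X_1)$ by attaching the closed star of $x$ (a cone on $\Lk_{\K(X_0)}(x)$) along the contractible link; hence the inclusion $\K(X_1) \hookrightarrow \K(X_0)$ is a homotopy equivalence, and induction completes the argument. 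The dual statement, with $X_{<x}$ contractible for $x$ minimal in $X_0 \setminus X$, is symmetric.

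For the general fiber lemma, I would form the mapping cylinder poset $M(f) := X \sqcup Y$, retaining the original orders on $X$ and $Y$ and declaring $x \leq_M y$ (for $x \in X$, $y \in Y$) iff $f(x) \leq y$. The order-preserving retraction $r \co M(f) \to Y$ with $r|_X = f$ and $r|_Y = \id_Y$ satisfies $r \circ i_X = f$ and $r \circ i_Y = \id_Y$, where $i_X, i_Y$ are the inclusions. If both $i_X$ and $i_Y$ are homotopy equivalences, then so is $r$, and hence so is $f = r \circ i_X$. For $i_Y$, I would apply the attachment clause in the ``$>$''-direction to $Y \subseteq M(f)$: for each $x \in X = M(f) \setminus Y$, $M(f)_{>x} = X_{>x} \cup Y_{\geq f(x)}$ admits the order-preserving self-map sending $z \in X_{>x}$ to $f(z) \in Y_{\geq f(x)}$ and fixing $Y_{\geq f(x)}$, which is comparable to the identity from above; composing with the contraction of $Y_{\geq f(x)}$ via its minimum $f(x)$ gives a deformation retraction, so $M(f)_{>x}$ is contractible.

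For $i_X$, apply the attachment clause in the ``$<$''-direction: for each $y \in Y = M(f) \setminus X$, I must show that $M(f)_{<y} = Y_{<y} \cup f^{-1}(Y_{\leq y})$ is contractible. This I verify by a secondary induction on $|Y_{\leq y}|$. Base: if $y$ is minimal in $Y$, then $Y_{<y} = \emptyset$ and $M(f)_{<y} = f^{-1}(\{y\}) = f^{-1}(Y_{\leq y})$, contractible by hypothesis. Inductive step: apply the attachment clause once more to $f^{-1}(Y_{\leq y}) \subseteq M(f)_{<y}$ in the ``$<$''-direction, observing that $(M(f)_{<y})_{<y'} = Y_{<y'} \cup f^{-1}(Y_{\leq y'}) = M(f)_{<y'}$ for each $y' \in Y_{<y}$, which is contractible by the inductive hypothesis; since $f^{-1}(Y_{\leq y})$ is itself contractible, so is $M(f)_{<y}$. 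The main obstacle is correctly threading these nested inductions through the mapping cylinder construction; the remaining arguments are standard manipulations of order complexes and simplicial joins, and the dual ``$f^{-1}(Y_{\geq y})$-contractible'' version is handled by passing to opposite posets.
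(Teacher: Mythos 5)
Your two-step plan — first establishing the attachment clause by removing maximal points of $X_0 \setminus X$ and invoking a gluing argument, then deriving the fiber statement via a mapping cylinder poset — is a legitimate and elementary route to Quillen's Proposition 1.6. The paper itself simply cites Quillen's original proof (which goes via a nerve-of-a-covering argument and obtains the attachment clause as a corollary of the fiber statement), so your proposal inverts the usual logical order; that is a genuinely different and valid approach. Your proof of the attachment clause itself is correct.

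There is, however, a recurring logical slip in the mapping-cylinder step: in each application of the attachment clause you verify contractibility of the \emph{wrong} link. The attachment clause as you stated it requires, for $X \subseteq X_0$, that the subposet links $X_{>x}$ (resp.\ $X_{<x}$) be contractible for $x \in X_0 - X$ — not the ambient links $(X_0)_{>x}$ (resp.\ $(X_0)_{<x}$). Applying it with $X_0 = M(f)$: for $i_Y$ you compute $M(f)_{>x}$ and build a deformation retraction, but what is required is $Y_{>x} = Y_{\geq f(x)}$, which is contractible immediately because it has minimum $f(x)$; no retraction argument is needed. For $i_X$ you write ``I must show that $M(f)_{<y}$ is contractible'' and launch a secondary induction, but what the lemma actually requires is $X_{<y}$ computed in $M(f)$, which is precisely $\{x \in X : f(x) \leq y\} = f^{-1}(Y_{\leq y})$ — literally the hypothesis of the fiber lemma. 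The secondary induction is superfluous, and more to the point, contractibility of $M(f)_{<y}$ does not in general imply contractibility of the subposet $f^{-1}(Y_{\leq y})$, so as written your argument does not actually discharge the hypothesis of the lemma you are invoking. Both conditions happen to hold here, so your conclusion is correct; but you should correct the reading of your own attachment clause, after which the mapping-cylinder step collapses to two one-line observations.
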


The \textit{Brown poset} $\S_p(G)$ is the poset of nontrivial $p$-subgroups of $G$.
The inclusion $\A_p(G)\hookrightarrow \S_p(G)$ is a homotopy equivalence by \cite[Proposition 2.1]{Qui78}, and if $O_p(G)\neq 1$ then $\A_p(G)$ is contractible (see \cite[Proposition 2.4]{Qui78}).

Quillen related the direct product of groups with the join of their $p$-subgroup posets.
The join of two posets $X*Y$ is the poset whose underlying set is the disjoint union of $X$ and $Y$, keeping the given ordering within $X$ and $Y$, and setting $x < y$ for each $x\in X$ and $y\in Y$.
Moreover, $\K(X*Y)$ equals the join of simplicial complexes $\K(X)*\K(Y)$, and this is homeomorphic to the topological join of $\K(X)$ and $\K(Y)$ (see \cite[Proposition 1.9]{Qui78}).
If $Y\subseteq X$ are finite posets and $x\in X$, then note that the link of $x$ in $Y$ is the join $Y_{<x} * Y_{>x}$.

\begin{proposition}[{\cite[Proposition 2.6]{Qui78}}]\label{propJoin}
$\A_p(G_1\times G_2)\simeq \A_p(G_1) * \A_p(G_2)$.
\end{proposition}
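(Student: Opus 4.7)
The plan is to factor the desired equivalence $\A_p(G_1\times G_2)\simeq \A_p(G_1)*\A_p(G_2)$ through the intermediate poset of ``product subgroups,'' using Quillen's fiber lemma (Proposition \ref{propositionTheoremAPosets}) and the poset comparison principle recalled just before it.

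Set $P = \A_p(G_1)$, $Q = \A_p(G_2)$, $\hat{P} = P\cup\{\hat{0}\}$, $\hat{Q} = Q\cup\{\hat{0}\}$, and let $Z\subseteq \A_p(G_1\times G_2)$ be the subposet of subgroups of the form $E_1\times E_2$ with $E_1\in \hat{P}$, $E_2\in \hat{Q}$ and $(E_1,E_2)\neq (\hat{0},\hat{0})$, so that $Z$ is order-isomorphic to $(\hat{P}\times \hat{Q})\setminus\{(\hat{0},\hat{0})\}$. My first step is to show that the inclusion $\iota\co Z\hookrightarrow \A_p(G_1\times G_2)$ is a homotopy equivalence. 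Let $\pi_i$ denote the coordinate projections; since $\pi_i(A)$ is elementary abelian for any $A\in \A_p(G_1\times G_2)$, the assignment $r(A) = \pi_1(A)\times \pi_2(A)$ defines an order-preserving map $r\co \A_p(G_1\times G_2)\to Z$ with $r\circ \iota = \id_Z$ and $\iota\circ r\geq \id$. The poset comparison principle then yields $\iota\circ r\simeq \id$, so $\iota$ is a homotopy equivalence.

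My second step is to show $Z\simeq P*Q$ by constructing a poset map $f\co Z\to P*Q$, namely $f(E_1,E_2) = E_2$ when $E_2\neq \hat{0}$, and $f(E_1,\hat{0}) = E_1$ otherwise; order-preservation is immediate from the convention that in $P*Q$ every element of $P$ lies below every element of $Q$. I would then apply Quillen's fiber lemma to $f$. For $p_0\in P$ the fiber $f^{-1}((P*Q)_{\leq p_0}) = \{(p,\hat{0}):p\leq p_0\}$ has maximum $(p_0,\hat{0})$ and so is contractible. For $q_0\in Q$ the fiber equals $(\hat{P}\times \hat{Q}_{\leq q_0})\setminus \{(\hat{0},\hat{0})\}$; the order-preserving self-map sending $(E_1,E_2)$ with $E_1\neq \hat{0}$ to $(E_1,q_0)$ and fixing the pairs $(\hat{0},E_2)$ lies above the identity, so it deformation retracts this fiber onto the subposet $W = \{(\hat{0},q):q\leq q_0\}\cup \{(p,q_0):p\in P\}$. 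In $W$ every element of the first set lies below every element of the second, so $W$ is the order join $Q_{\leq q_0}*P$; since $Q_{\leq q_0}$ has a maximum its order complex is contractible, and hence so is $\K(W)=\K(Q_{\leq q_0})*\K(P)$. Quillen's fiber lemma then gives $Z\simeq P*Q$, and combining with the first step finishes the proof.

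The main obstacle is the second step, specifically the identification of the correct deformation retract of the fiber over $q_0\in Q$: choosing the ``height'' $(E_1,q_0)$ at which to project, verifying this map is above the identity, and recognizing the image as an order join whose contractibility follows from one factor being cone-like.
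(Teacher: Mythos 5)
The paper does not reprove this statement; it is quoted as \cite[Proposition 2.6]{Qui78}, so there is no in-paper proof to compare against. Your argument is correct and reconstructs essentially Quillen's route: the closure operator $A\mapsto\pi_1(A)\times\pi_2(A)$ retracts $\A_p(G_1\times G_2)$ onto the subposet of product subgroups, which is order-isomorphic to $(\hat{P}\times\hat{Q})\setminus\{(\hat 0,\hat 0)\}$, and the fiber-lemma computation you give (project to the second coordinate when it is nontrivial, to the first otherwise, and retract the fiber over $q_0\in Q$ onto the order join $Q_{\le q_0}*P$, which is contractible because $Q_{\le q_0}$ has a maximum) correctly identifies that poset with $\A_p(G_1)*\A_p(G_2)$.
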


The following proposition shows that, in some sense, it is enough to study the homotopical properties of $\A_p(G)$ when $Z(G)$ is the trivial group.

\begin{proposition}\label{propCenterReduction}
Let $Z\leq Z(G)$.
The following hold.
\begin{enumerate}
\item If $Z$ is a nontrivial $p$-group, then $\A_p(G)$ is contractible.
\item If $Z$ is a $p'$-group then the induced map $\A_p(G) \to \A_p(G/Z)$ is an isomorphism of posets.
Moreover, $O_p(G/Z) \groupiso O_p(G)$.
\item In particular, if $G$ satisfies (H1)$_R$ and $Z\neq 1$ is a $p'$-group, then $G$ satisfies ($R$-QC), where $R=\ZZ$ or $\QQ$.
\end{enumerate}
\end{proposition}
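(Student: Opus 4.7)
The plan is to treat the three items in turn, with (3) being a short consequence of (2) combined with the inductive assumption (H1)$_R$.

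For (1), I would note that $\Omega_1(Z)$ is a nontrivial central elementary abelian $p$-subgroup of $G$ and therefore lies in $\A_p(G)$. For any $A\in\A_p(G)$, the product $A\cdot\Omega_1(Z)$ is again elementary abelian (since $\Omega_1(Z)$ is central), so $f(A):=A\cdot\Omega_1(Z)$ defines an order-preserving self-map of $\A_p(G)$ satisfying $\id_{\A_p(G)}\leq f\geq c_{\Omega_1(Z)}$ pointwise. The standard homotopy argument for comparable order-preserving maps (recalled just before Proposition \ref{propositionTheoremAPosets}) then yields $\id\simeq c_{\Omega_1(Z)}$, giving contractibility of $\A_p(G)$. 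This is essentially the special case of Quillen's \cite[Proposition 2.4]{Qui78}.

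For (2), the key observation is that every $p$-subgroup $P$ of $G$ meets $Z$ trivially, since $\gcd(|P|,|Z|)=1$. Thus the quotient map restricts to an injective homomorphism on each $E\in\A_p(G)$, and $E\mapsto EZ/Z$ is a well-defined, injective, order-preserving map from $\A_p(G)$ to $\A_p(G/Z)$. To construct its inverse, given $\bar E\in\A_p(G/Z)$ let $E_0$ be its full preimage in $G$; then $1\to Z\to E_0\to\bar E\to 1$ is a central extension of coprime order, so Schur--Zassenhaus (together with centrality of $Z$, which kills $\Hom(\bar E,Z)$) yields a \emph{unique} complement $E\cong\bar E$, necessarily elementary abelian and equal to the unique Sylow $p$-subgroup of $E_0$. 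The inclusion $\bar E_1\leq\bar E_2$ forces the complement $E_1$ to sit inside the unique Sylow $p$-subgroup $E_2$ of $ZE_2$, so $\bar E\mapsto E$ is order-preserving; this is inverse to $E\mapsto EZ/Z$. For the second assertion, $O_p(G)Z/Z$ is a normal $p$-subgroup of $G/Z$, giving $|O_p(G)|\leq|O_p(G/Z)|$; conversely, the preimage $Q_0$ of $O_p(G/Z)$ in $G$ decomposes as $Z\times P$ with $P$ the unique (hence characteristic) Sylow $p$-subgroup, and $Q_0\normal G$ forces $P\normal G$, so $|O_p(G/Z)|=|P|\leq|O_p(G)|$. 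Combining these yields $O_p(G)\cong O_p(G/Z)$.

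Part (3) is then immediate: since $Z\neq 1$, $G/Z$ is a proper central quotient of $G$, and by (H1)$_R$ it satisfies ($R$-QC). If $O_p(G)=1$, then by (2) also $O_p(G/Z)=1$, so $\tilde H_*(\A_p(G/Z),R)\neq 0$, and the poset isomorphism from (2) transfers this to $\tilde H_*(\A_p(G),R)\neq 0$. The only genuinely technical input is the coprime-order splitting in part (2); everything else amounts to orderly verification of well-definedness, bijectivity, and order-preservation.
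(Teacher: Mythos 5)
Your proposal is correct and follows essentially the same route as the paper's (very terse) proof: part (1) is Quillen's standard contractibility argument, which the paper invokes by observing $O_p(Z)\leq O_p(G)$ and citing \cite[Proposition 2.4]{Qui78}, and which you reproduce directly; part (2) is the content the paper compresses into ``isomorphism theorems and Sylow's theorems'' plus the observation that $O_p(HZ/Z)\cong O_p(HZ)$, which your nilpotent-preimage/unique-complement argument fills in explicitly; and part (3) is identical. You supply considerably more detail than the paper, but the underlying ideas coincide.
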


\begin{proof}
Part (1) follows easily since $O_p(Z)\leq O_p(G)$.
Part (2) follows directly from the isomorphism theorems and Sylow's theorems.
For the ``Moreover" part of (2), note that if $H\leq G$ then $O_p(HZ/Z)\groupiso O_p(HZ)$ since $Z$ is a central $p'$-subgroup of $G$.

Finally, part (3) is a consequence of the definition of the (H1)$_R$ hypothesis and part (2).
\end{proof}

Below we give an immediate consequence of the (H1)$_R$ hypothesis.
Let $R = \ZZ$ or $\QQ$.

\begin{lemma}\label{lemmaCentralAndOmega1Reduction}
Assume that $G$ satisfies (H1)$_R$ and that
\begin{quote}
(H2) \quad $Z(G) \neq 1$ or $\Omega_1(G) < G$.
\end{quote}
Then $G$ satisfies ($R$-QC).
\end{lemma}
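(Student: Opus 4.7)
The plan is to begin by assuming $O_p(G) = 1$, since otherwise the conclusion of ($R$-QC) is vacuous for $G$, and then to split into the two subcases allowed by (H2), handling each with one of the tools already assembled: Proposition \ref{propCenterReduction}(3) for the central case, and a direct application of (H1)$_R$ to a proper subgroup for the $\Omega_1$ case.

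First I would dispose of the case $Z(G) \neq 1$. Write $Z(G)$ as the internal direct product of its $p$-part and its $p'$-part. Since $O_p(Z(G))$ is a normal $p$-subgroup of $G$, it is contained in $O_p(G) = 1$, so the $p$-part of $Z(G)$ is trivial and $Z := Z(G)$ is itself a nontrivial central $p'$-subgroup. Now Proposition \ref{propCenterReduction}(3) applies verbatim: (H1)$_R$ together with the existence of such a $Z$ yields ($R$-QC) for $G$.

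Next I would handle the case $\Omega_1(G) < G$. Set $H := \Omega_1(G)$. The key observations are: (i) $H$ is a characteristic (hence normal) proper subgroup of $G$; (ii) every $E \in \A_p(G)$ is generated by elements of order $p$, so $E \leq H$, whence $\A_p(G) = \A_p(H)$ as posets; and (iii) $O_p(H)$ is characteristic in $H$ and therefore normal in $G$, so $O_p(H) \leq O_p(G) = 1$. By (H1)$_R$ applied to the proper subgroup $H$, we obtain $\tilde H_*(\A_p(H),R) \neq 0$, and by (ii) this is exactly $\tilde H_*(\A_p(G),R)$.

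There is not really a hard step here; the lemma is essentially a bookkeeping consequence of Proposition \ref{propCenterReduction} and the elementary fact that $\Omega_1(G)$ is characteristic and captures all of $\A_p(G)$. The one point requiring a moment of care is the verification that $O_p(\Omega_1(G)) = 1$, since (H1)$_R$ only gives the conclusion $\tilde H_*(\A_p(H),R) \neq 0$ under the hypothesis $O_p(H) = 1$; this in turn rests on the normality of $\Omega_1(G)$ in $G$.
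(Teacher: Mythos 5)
Your proof is correct and takes essentially the same route as the paper: in the central case the paper inlines the argument of Proposition \ref{propCenterReduction}(2)--(3) (passing to $G/Z(G)$ and invoking (H1)$_R$) whereas you cite part (3) directly, and in the $\Omega_1$ case both arguments use normality of $\Omega_1(G)$ to get $O_p(\Omega_1(G)) = 1$ and then apply (H1)$_R$ to the proper subgroup. Your explicit check that $Z(G)$ is a $p'$-group (via $O_p(Z(G)) \leq O_p(G) = 1$) is a detail the paper leaves implicit, but the substance is identical.
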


\begin{proof}
Suppose that $O_p(G) = 1$.
If $Z(G)\neq 1$, then by the above lemma $\A_p(G) = \A_p(G/Z(G))$.
Since $G/Z(G)$ is a proper central quotient of $G$, it satisfies ($R$-QC) by (H1)$_R$.
Moreover, we also have that $O_p(G/Z(G)) = O_p(G) = 1$, so $\tilde{H}_*(\A_p(G),R) = \tilde{H}_*(\A_p(G/Z(G)),R)\neq 0$.

If $\Omega_1(G) < G$, then $\Omega_1(G)$ satisfies ($R$-QC) by (H1)$_R$.
Note that $O_p(\Omega_1(G)) = 1$ since $\Omega_1(G)$ is normal in $G$.
Since $\A_p(G) = \A_p(\Omega_1(G))$, $\tilde{H}_*(\A_p(G),R) = \tilde{H}_*(\A_p(\Omega_1(G)),R) \neq 0$.
\end{proof}

In the next lemmas we recall some results that will play a fundamental role in the proof of our main theorems.
For a given subgroup $H\leq G$, we ``inflate" the subposet $\A_p(H)$ and then we show that the remaining points of $\A_p(G)$ are attached to this inflated subposet throughout their centralizers in $H$.

\begin{definition}
For $H\leq G$, let
\[\N(H):=\{E\in\A_p(G) \tq E\cap H \neq 1\}.\]
We sometimes abbreviate $\N_H = \N(H)$.
\end{definition}

We can also regard the poset $\N(H)$ as the ``neighbourhood" of $\A_p(H)$, and $\N(H)-\A_p(H)$ as the ``boundary" of this neighbourhood.
We give below some consequences of this definition. See also \cite{Segev,SW}.

\begin{lemma}\label{lemmaInflation}
If $H\leq G$ then $\A_p(H)\hookrightarrow\N(H)$ is a strong deformation retract.
\end{lemma}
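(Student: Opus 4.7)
The plan is to define a retraction $r\co \N(H)\to \A_p(H)$ by the obvious formula
\[r(E) \;:=\; E\cap H,\]
and verify that it exhibits $\A_p(H)$ as a strong deformation retract of $\N(H)$. I would first check $r$ is well-defined: for $E\in\N(H)$, by definition $E\cap H\neq 1$, and $E\cap H$ is an elementary abelian $p$-subgroup of $H$ since it is a subgroup of the elementary abelian $p$-group $E$, so $E\cap H\in\A_p(H)$. Order-preservation is immediate: if $E\leq E'$ in $\N(H)$, then $E\cap H\leq E'\cap H$.

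Next I would check the two defining properties of a (poset-theoretic) strong deformation retract, letting $i\co \A_p(H)\hookrightarrow\N(H)$ denote the inclusion. On the one hand, for $E\in \A_p(H)$ we have $E\leq H$, so $r(i(E))=E\cap H=E$; hence $r\circ i=\id_{\A_p(H)}$. On the other hand, for every $E\in\N(H)$ we have $r(E)=E\cap H\leq E$, so $i\circ r\leq \id_{\N(H)}$ as order-preserving maps $\N(H)\to\N(H)$.

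Now I would invoke the standard fact recalled just before the statement: two order-preserving self-maps $f\leq g$ on a finite poset induce homotopic simplicial maps on the order complex, with the homotopy realised as a ``straight-line'' homotopy through chains $x\leq f(x)\leq g(x)$. Applied to $f=i\circ r$ and $g=\id_{\N(H)}$, this gives a homotopy $i\circ r\simeq \id_{\N(H)}$; moreover, on points of $\A_p(H)$ we already have $i\circ r=\id$, so the homotopy is stationary on $\A_p(H)$. Combined with $r\circ i=\id_{\A_p(H)}$, this is exactly the statement that $\A_p(H)\hookrightarrow \N(H)$ is a strong deformation retract.

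There is essentially no obstacle here; the only point requiring any thought is ensuring $E\cap H$ lies in $\A_p(H)$ rather than merely in $\S_p(H)$, but this is automatic because subgroups of elementary abelian $p$-groups are elementary abelian. The content of the lemma is really just the observation that the correct retraction is ``intersect with $H$''.
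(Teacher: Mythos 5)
Your proof is correct and takes essentially the same approach as the paper: both define the retraction $E\mapsto E\cap H$ and verify $r\circ i=\id_{\A_p(H)}$ and $i\circ r\leq \id_{\N(H)}$, invoking the standard comparison-of-ordered-maps homotopy. You spell out the well-definedness check and the ``stationary on $\A_p(H)$'' point, which the paper leaves implicit, but the argument is the same.
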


\begin{proof}
Let $i:\A_p(H)\hookrightarrow\N(H)$ be the inclusion and $\varphi:\N(H)\to \A_p(H)$ the map defined by $\varphi(E) = E\cap H$.
Then $i$ and $\varphi$ are order preserving maps with $i\varphi \leq \Id_{\N(H)}$ and $\varphi i = \Id_{\A_p(H)}$.
\end{proof}

The following lemma shows that the elements outside $\N(H)$ attach to it via their centralizers in $H$.

\begin{lemma}\label{lemmaLinksInflation}
Let $H\leq G$ be a subgroup and let $E\in \A_p(G)$ be such that $E\cap H = 1$.
Then $\N(H)_{>E}$ is homotopy equivalent to $\A_p(C_H(E))$.
\end{lemma}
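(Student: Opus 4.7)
The plan is to exhibit an explicit pair of order preserving maps between $\N(H)_{>E}$ and $\A_p(C_H(E))$ and verify they are homotopy inverses via the standard order preserving homotopy criterion recalled in the excerpt ($f \leq g \Rightarrow f \simeq g$).

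First, I would set up the candidate maps. Define
\[\varphi \co \N(H)_{>E} \to \A_p(C_H(E)), \qquad \varphi(F) = F\cap H,\]
\[\psi \co \A_p(C_H(E)) \to \N(H)_{>E}, \qquad \psi(B) = BE.\]
Both are clearly order preserving once well-defined, so the real content is in checking well-definedness. For $\varphi$, note that $F$ is elementary abelian and contains $E$, hence every element of $F$ (in particular every element of $F\cap H$) centralizes $E$; therefore $F\cap H \leq C_H(E)$, and $F\cap H \neq 1$ because $F\in \N(H)$. For $\psi$, since $B\leq C_H(E)$ the subgroups $B$ and $E$ commute, so $BE$ is an abelian $p$-group generated by elements of order $p$, hence elementary abelian. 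Moreover $B\cap E \leq H\cap E = 1$, so $BE$ strictly contains $E$, and $B = B\cap H \subseteq BE\cap H$ shows $BE\in \N(H)_{>E}$.

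Next I would check the two compositions. For $F\in \N(H)_{>E}$,
\[\psi\varphi(F) = (F\cap H)\,E \leq F,\]
because $F\cap H\leq F$ and $E\leq F$. For $B\in \A_p(C_H(E))$,
\[\varphi\psi(B) = (BE)\cap H \geq B,\]
since $B\leq BE$ and $B\leq H$. Thus $\psi\varphi \leq \Id_{\N(H)_{>E}}$ and $\varphi\psi \geq \Id_{\A_p(C_H(E))}$, so both compositions are homotopic to the respective identities, and $\varphi$, $\psi$ are mutually inverse homotopy equivalences.

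There is no real obstacle here; the only subtle point, which I would state carefully, is the observation that any elementary abelian $p$-subgroup $F$ of $G$ containing $E$ automatically has $F\cap H$ lying inside $C_H(E)$, not merely inside $H$. This is what makes $\varphi$ land in $\A_p(C_H(E))$ and is the conceptual reason the link of an ``outside'' point $E$ in $\N(H)$ is governed by $C_H(E)$ rather than just $H$. One could alternatively phrase the argument via Quillen's fiber lemma (Proposition \ref{propositionTheoremAPosets}) applied to $\varphi$, but the direct comparison above is shorter and makes the deformation retract structure transparent.
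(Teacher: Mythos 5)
Your proof is correct and essentially identical to the paper's: you exhibit the same pair of maps $F\mapsto F\cap H$ and $B\mapsto BE$ and compare the compositions with the identities via the order-preserving homotopy criterion. The only minor difference is that the paper observes $(BE)\cap H = B$ is an exact equality by the Dedekind modular law (since $B\leq H$), whereas you settle for the inequality $\varphi\psi \geq \Id$; both suffice, and your extra care with well-definedness of $\varphi$ and $\psi$ is a welcome addition to an otherwise terse argument.
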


\begin{proof}
Let $f:\A_p(C_H(E)) \to \N(H)_{>E}$ and $g:\N(H)_{>E}\to \A_p(C_H(E))$ be the maps defined by $f(A) = AE$ and $g(A) = A\cap H$.
Then $fg(A) = (A\cap H)E\leq A$ and $gf(A) = (AE)\cap H = A$ (by modular law).
Hence $fg \leq \Id_{\N(H)_{>E}}$ and $gf = \Id_{\A_p(C_H(E))}$.
\end{proof}

The above lemma shows that the link in $\N(H)\simeq \A_p(H)$ of a point in $\A_p(G)-\N(H)$, which can be thought of as the ``upper" part of its attachment, is the Quillen poset of its centralizer in $H$.
The ``lower" link of a point $E\in\A_p(G)$ is the poset of nontrivial proper subspaces of $E$ (regarded as a vector space over $\mathbb{F}_p$, the finite field of $p$ elements), and it has the homotopy type of a nontrivial bouquet of $p^{m_p(E)(m_p(E)-1)/2}$ spheres of dimension $m_p(E)-2$.
We can rebuild $\A_p(G)$ from $\N(H)$ by attaching points in the following way.
Take a linear extension of the complement $\A_p(G) - \N(H) = \{E_1,\ldots, E_r\}$ such that $E_i\leq E_j$ implies $i\leq j$.
For each $0\leq i\leq r$, consider the subposet $X_i = \N(H) \cup \{E_1,\ldots, E_{i}\}$.
This gives rise to a filtration
\[\N(H) = X_0 \subseteq X_1\subseteq \ldots \subseteq X_r = \A_p(G),\]
where $X_i = X_{i-1}\cup \{E_i\}$ and the link of $E_i$ in $X_{i-1}$ is
$$(\A_p(E_i) - \{E_i\}) * \N(H)_{>E_i} \simeq \left(\bigvee_{l=1}^k \SS^{m_p(E_i)-2}\right) * \A_p(C_H(E_i)),$$ with $k = p^{m_p(E_i)(m_p(E_i)-1)/2}$.

This provides a useful way to understand the homotopy type of $\A_p(G)$ if we choose a convenient subgroup $H\leq G$ for which we know how these centralizers are.
In particular, if they are contractible, the homotopy type of $\A_p(H)$ does not change.

\begin{lemma}[{cf. \cite[Lemma 4.3]{PSV}}]\label{lemmaRetract}
Let $G$ be a finite group and let $H\leq G$.
In addition, suppose that $O_p(C_H(E))\neq 1$ for each $E\in\A_p(G)$ with $E \cap H = 1$.
Then $\A_p(G)\homotequiv \A_p(H)$.

\begin{proof}
Let $E\in \A_p(G) - \N(H)$.
By Lemma \ref{lemmaLinksInflation} $\N(H)_{>E} \simeq \A_p(C_H(E))$, which is contractible by hypothesis.
Finally, by Proposition \ref{propositionTheoremAPosets} and Lemma \ref{lemmaInflation}, $\A_p(G)\simeq \N(H)\simeq \A_p(H)$.
\end{proof}
\end{lemma}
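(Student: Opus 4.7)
The plan is to assemble this as a two-step homotopy equivalence $\A_p(H)\simeq \N(H)\simeq \A_p(G)$, using the two inflation lemmas just proved together with the subposet form of Proposition \ref{propositionTheoremAPosets} and the Quillen fact that $O_p(K)\ne 1$ implies $\A_p(K)$ is contractible.

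First, the inclusion $\A_p(H)\hookrightarrow \N(H)$ is already shown to be a strong deformation retract by Lemma \ref{lemmaInflation}, so the whole task reduces to showing that the inclusion $\N(H)\hookrightarrow \A_p(G)$ is a homotopy equivalence. For this I would apply the second assertion of Proposition \ref{propositionTheoremAPosets}: if $X\subseteq X_0$ and $X_{>x}$ is contractible for every $x\in X_0-X$, then the inclusion is a homotopy equivalence. Here $X=\N(H)$ and $X_0=\A_p(G)$, and the points to check are exactly the $E\in \A_p(G)$ with $E\cap H=1$.

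For such an $E$, Lemma \ref{lemmaLinksInflation} identifies $\N(H)_{>E}$ (up to homotopy) with $\A_p(C_H(E))$. The hypothesis gives $O_p(C_H(E))\ne 1$, and by the classical result of Quillen recalled just above the statement, this forces $\A_p(C_H(E))$ to be contractible. Therefore $\N(H)_{>E}\simeq \ast$ for every $E\in \A_p(G)-\N(H)$, which is exactly what is needed to invoke Proposition \ref{propositionTheoremAPosets}. Concatenating the two equivalences yields $\A_p(G)\simeq \N(H)\simeq \A_p(H)$.

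I do not expect any real obstacle here: once one has Lemmas \ref{lemmaInflation} and \ref{lemmaLinksInflation} in hand, the argument is essentially a formal bookkeeping step. The one place to be careful is to apply the \emph{upper-link} version of the fiber lemma (the one using $X_{>x}$), because the points of $\A_p(G)-\N(H)$ are being attached to $\N(H)$ from below via their subgroups $E\cap H$, so their relevant link in $\N(H)$ is the set of strict overgroups.
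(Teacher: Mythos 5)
Your proof is correct and follows exactly the same route as the paper: Lemma \ref{lemmaInflation} gives $\A_p(H)\simeq\N(H)$, then Proposition \ref{propositionTheoremAPosets} (the $X_{>x}$ version) together with Lemma \ref{lemmaLinksInflation} and the Quillen contractibility criterion $O_p\neq 1$ gives $\N(H)\simeq\A_p(G)$. The only minor slip is in the closing remark: for $E\in\A_p(G)-\N(H)$ one has $E\cap H=1$, so $\N(H)_{<E}$ is actually empty (no subgroup of $E$ can meet $H$ nontrivially); the reason to use the upper-link version is that the empty lower link is \emph{not} contractible, not that $E$ is ``attached from below via $E\cap H$,'' which is trivial -- but the conclusion you draw is the right one.
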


For example, we can take $H$ to be $LC_G(L)$, where $L$ is a simple component of $G$.
Note that $F^*(G)\leq H$.
If $E\in\A_p(N_G(L))$ and $E\cap H = 1$ then $C_H(E) =C_L(E)C_G(LE)$ and $\A_p(C_H(E))\simeq \A_p(C_L(E)) * \A_p(C_G(LE))$.
The group $C_L(E)$ is the centralizer of an elementary abelian $p$-group acting on the simple group $L$, which can be described by using the classification of the finite simple groups.
We may also apply inductive arguments on $C_G(LE)$.

\section{The homology propagation lemma}\label{sectionHomologyPropagation}

The aim of this section is to propose a generalization of \cite[Lemma 0.27]{AS93}, stated in Lemma \ref{generalizedHomologyPropgation}.
Both lemmas allow to propagate nontrivial homology from proper subposets to the whole Quillen poset.
These tools will be very useful to establish Quillen's conjecture when we have an extra inductive assumption such as (H1)$_R$.

Our Lemma \ref{generalizedHomologyPropgation} shares the spirit of \cite[Lemma 0.27]{AS93} but with the extra feature that it can also be applied to proper subposets $X \subset \A_p(G)$.
This subposet $X$ will be typically chosen to be homotopy equivalent to $\A_p(G)$ but better behaved, in certain sense, than the Quillen poset.
In many cases, we will see that $X$ satisfies the hypotheses of Lemma \ref{generalizedHomologyPropgation} while $\A_p(G)$ does not.

Before proceeding with the proof of this lemma, we need some definitions and results of \cite{AS93}.
From now on, we suppress the coefficient notation on the homology and suppose that they are taken in the ring $R = \ZZ$ or $\QQ$.
The definitions given below do not depend on the coefficient ring.

If $X$ is a finite poset, denote by $\tilde{C}_*(X)$ its augmented chain complex with coefficients in $R$.
Recall that $\tilde{C}_n(X)$ is freely generated by the chains $(x_0 < x_1 < \ldots < x_n)$ in $X$.
Write $\tilde{Z}_n(X)$ for the subgroup of $n$-cycles and $\tilde{H}_*(X)$ for the reduced homology of $X$.
Denote by $X'$ the poset of nonempty chains of $X$.
Equivalently, $X'$ is the face poset of $\K(X)$.

\begin{definition}
Let $X$ be a finite poset.
A chain $a\in X'$ is \textit{full} if for every $x\in X$ such that $\{x\}\cup a$ is a chain we have that $x\in a$ or $x\geq \max a$.
A chain $b$ containing $a$ is called \textit{$a$-initial chain} if for every $x\in b-a$ we have that $x > \max a$.
\end{definition}

The following property was introduced by Aschbacher and Smith in \cite{AS93}.

\begin{definition}\label{definitionQDp}
We say that $G$ has the \textit{Quillen's dimension property at $p$}, $(QD)_p$ for short, if $\tilde{H}_{m_p(G)-1}(\A_p(G))\neq 0$.
That is, $\A_p(G)$ has nontrivial homology in the highest possible dimension.
\end{definition}

Observe that the top integer homology group of $\A_p(G)$ is always free, so this definition does not depend on the chosen coefficient ring $\ZZ$ or $\QQ$.
It is worth noting that finite groups may not satisfy this property in general.
This had been already observed by Quillen in \cite{Qui78}.

\begin{definition}\label{definitionExhibitsQDp}
Let $G$ be a finite group with $(QD)_p$ and let $m = m_p(G)-1$.
Take a nontrivial cycle $\alpha\in \tilde{H}_m(\A_p(G)) = \tilde{Z}_m(\A_p(G))$.
If the chain $a = (A_0 < A_1 < \ldots < A_m)$ is a addend of the cycle $\alpha$, we write $a\in \alpha$ and say that $a$ or $A_m$ \textit{exhibits} $(QD)_p$ for $G$.
Note that $a$ is a full chain.
\end{definition}

We state next a special configuration of the $p$-solvable case of the conjecture.
Its proof can be found in \cite[Theorem 8.2.12]{Smi11}.
See also \cite{Alperin,AS93,Diaz}.

\begin{theorem}\label{theoremPSolvableCaseQDp}
If $G = O_{p'}(G)A$, where $A$ is an elementary abelian $p$-group acting faithfully on $O_{p'}(G)$, then $G$ has $(QD)_p$ exhibited by $A$.
\end{theorem}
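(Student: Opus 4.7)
Write $N:=O_{p'}(G)$. Since $A\cap N=1$ and $|G|=|N|\,|A|$, the elementary abelian group $A$ is a Sylow $p$-subgroup of $G$, so $m_p(G)=m:=\dim_{\mathbb{F}_p}A$ and the maximal members of $\A_p(G)$ are precisely the Sylow conjugates $A^g$ ($g\in N$). Exhibiting $(QD)_p$ via $A$ means producing a nonzero class in $\tilde{H}_{m-1}(\A_p(G))$ one of whose chain summands is a full flag $B_1<B_2<\cdots<B_m$ with $B_m=A$.

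My first step is a reduction. I would pass to a minimal $A$-invariant subgroup $V\leq N$ on which $A$ still acts faithfully (obtained by refining a chief series of $NA$ inside $N$). Coprime action forces $V$ to be elementary abelian, hence a faithful $\mathbb{F}_q[A]$-module for some prime $q\neq p$. The inclusion $V{:}A\hookrightarrow G$ induces an embedding $\A_p(V{:}A)\hookrightarrow\A_p(G)$ preserving top chains ending at $A$, so it suffices to establish $(QD)_p$ for $G_0:=V{:}A$.

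My second step exploits the module structure. Maschke gives a decomposition $V=\bigoplus_\chi V_\chi$ into isotypic components for characters $\chi\colon A\to\overline{\mathbb{F}_q}^{\,\times}$, and faithfulness yields $\bigcap_\chi\ker\chi=1$. I would select characters $\chi_1,\dots,\chi_m$ whose kernels $H_i:=\ker\chi_i$ are $m$ hyperplanes of $A$ in general position, together with nonzero vectors $v_i\in V_{\chi_i}$ satisfying $C_A(v_i)=H_i$. A direct computation gives $A\cap A^{v_i}=H_i$, so distinct Sylow conjugates $A,A^{v_1},\dots,A^{v_m}$ glue along the transverse hyperplane arrangement $\{H_i\}$ in $A$. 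The contractibility of $\A_p(A)$ (coned at $A$) is broken in $\A_p(G_0)$: each partial flag $B_1<\cdots<B_{m-1}$ ending at a hyperplane $H$ admits several top extensions $H<A^v$, one for each Sylow containing $H$, parametrized by $C_V(H)/C_V(A)$. An alternating-sum construction over orderings of these top extensions yields a candidate top cycle in $\tilde{Z}_{m-1}(\A_p(G_0))$ whose identity-permutation summand is a flag of $\A_p(A)$ terminating at $A$.

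The main obstacle is verifying that this candidate is genuinely a cycle and not a boundary: the boundary computation requires careful bookkeeping of the hyperplane-arrangement combinatorics, and ruling out boundaries uses the faithfulness of $A$ on $V$ in an essential way. Rather than redo these combinatorics, the cleanest route is to invoke the explicit chain-level formula supplied by the characterization of the $p$-solvable case due to Alperin and D\'iaz Ramos (see \cite{Alperin,Diaz}, or the presentation in \cite[Thm.~8.2.12]{Smi11}); it applies verbatim to our reduced configuration $G_0=V{:}A$ and has $A$ as terminal vertex in at least one nonzero chain coefficient.
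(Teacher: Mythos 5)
The paper gives no proof of this statement: immediately before stating it, it points to \cite[Theorem~8.2.12]{Smi11} together with \cite{Alperin,AS93,Diaz}. Your proposal ends by invoking the same chain-level result of Alperin/D\'iaz Ramos/Smith, so your overall route matches the paper's.

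The preliminary reduction you insert is, however, both unnecessary (the cited result applies directly to $G=O_{p'}(G)A$) and, as written, has a gap. You claim that a minimal $A$-invariant subgroup $V\leq O_{p'}(G)$ on which $A$ acts faithfully must be elementary abelian ``by coprime action.'' This is not automatic: the hypothesis does not make $O_{p'}(G)$ solvable, and chief factors of $O_{p'}(G)A$ inside $O_{p'}(G)$ are characteristically simple $p'$-groups, possibly direct powers of non-abelian simple groups. Coprime action does give $C_A(O_{p'}(G))=\bigcap_i C_A(N_i/N_{i-1})$ across a chief series, so $A$ is faithful on the \emph{family} of chief factors, but faithfulness need not concentrate on a single elementary abelian section. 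Even in the solvable case, the natural descent (through $F(O_{p'}(G))$ and then $Q/\Phi(Q)$ for $A$-invariant Sylow subgroups $Q$) yields an abelian section that is a direct product of elementary abelian $q$-groups over several primes $q$, not a single $\mathbb{F}_q[A]$-module as your Maschke step requires. Since you hand off the actual cycle verification to the literature anyway, this does not invalidate the conclusion, but the reduction should either be justified under the actual hypotheses or simply dropped; the Maschke/hyperplane sketch is a useful heuristic for the single-module case (essentially the apartment picture in Alperin's ``Lie approach''), but as you acknowledge it is not a complete argument on its own.
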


Next, we set up the proper context that we need to culminate with the proof of Lemma \ref{generalizedHomologyPropgation}.
We shall work under \cite[Hypothesis 0.15]{AS93}, which we state below.

\begin{hypothesis}[{Central product}]\label{hypothesisCP}
$H\leq G$ and $K\leq C_G(H)$ with $H\cap K$ a $p'$-group.
\end{hypothesis}

Hypothesis \ref{hypothesisCP} implies that $[H,K] = 1$ and $H\cap K\leq Z(H)\cap Z(K)$.
Moreover, we have that $$\A_p(HK) \simeq \A_p(H/H\cap K) * \A_p(K/H\cap K)$$ since $HK$ is a central product and the shared central subgroup $H \cap K$ is a $p'$-group (see \cite[Lemma 0.11]{AS93}).

Under appropriate circumstances, there will be nonzero cycles $\alpha$ and $\beta$ in the homology of $\A_p(H)$ and $\A_p(K)$ respectively, and they will give rise to a nontrivial cycle $\alpha \times \beta$ (the shuffle product) in the homology of $\A_p(HK)$.
The final goal is to show that this product cycle produces nontrivial homology in $\tilde{H}_*(\A_p(G))$.
In order to do that, we will ask for some subgroup $A\in \A_p(H)$ involved in $\alpha$ to satisfy suitable strong hypotheses.
With these hypotheses, if $\alpha\times\beta$ is the trivial cycle in $\tilde{H}_*(\A_p(G))$, we will reduce to a calculation in $\tilde{H}_*(\A_p(K))$ and then arrive to a contradiction.

The idea of this section is to perform the above homology computations in a typically proper subposet $X$, which, in general, will be constructed to be homotopy equivalent to $\A_p(G)$.
Hence, showing that $\alpha\times \beta$ is a nontrivial cycle in $\tilde{H}_*(X)$ will lead to nontrivial homology in $\A_p(G)$, as desired.

The reduction described above is in fact carried out inside a subposet lying over $\A_p(K)$, namely $\N(K)$.
Therefore, we shall take $X$ satisfying the following property.

\begin{definition}
Under Hypothesis \ref{hypothesisCP}, we call a subposet $X\subseteq \A_p(G)$ an $\N_K$-\textit{superset} if $\N(K)\subseteq X$.
\end{definition}

Moreover, the homotopy equivalence between $\N(K)$ and $\A_p(K)$ given in Lemma \ref{lemmaInflation} will allow us to translate calculations in the homology of $\N(K)$ into the homology of $\A_p(K)$.
This approach also allows us to assume weaker hypotheses on the subgroup $A\in \A_p(H)$ than those originally required in \cite[Lemma 0.27]{AS93}.
For example, in \cite[Lemma 0.27]{AS93} it is required that elements of $\A_p(G)_{>A}$ have the form $AB$ with $B\in \A_p(K)$.
We weaken this hypothesis in Lemma \ref{generalizedHomologyPropgation} by only asking that the smaller subset $X_{>A}$ be contained in $\N(K)$.

We proceed now to generalize the definitions and results coming after \cite[Hypothesis 0.15]{AS93}.

\begin{definition}
Assume Hypothesis \ref{hypothesisCP}.
Let $a = (A_0 < \ldots < A_m)$ be a chain of $\A_p(H)$ and $b = (B_0 < \ldots < B_n)$ be a chain of $\A_p(K)$.
We have the following chain in $\A_p(HK)$: $$a*b :=(A_0 < \ldots < A_m < B_0A_m <\ldots < B_nA_m).$$
\end{definition}

Let $c = (0,1,2,\ldots, m+n+1)$.
A \textit{shuffle} is a permutation $\sigma$ of the set $\{0,1,2, \ldots, m+n+1\}$ such that $\sigma(i) < \sigma(j)$ if $ i < j\leq m$ or $ m+1 \leq i < j$.
Let $\sigma(c) := (\sigma(0),\sigma(1),\ldots, \sigma(m+n+1))$.

With the notation of the above definition, let $C_j = A_j$ if $j\leq m$ or $B_{j-(m+1)}$ if $j\geq m+1$.
For a shuffle $\sigma$, define $(a\times b)_{\sigma}$ to be the chain whose $i$-th element is $C_{\sigma(0)}C_{\sigma(1)}\ldots C_{\sigma(i)}$.

\begin{definition}
[{\cite[Definition 0.21]{AS93}}]
Assume Hypothesis \ref{hypothesisCP}.
The shuffle product of $a$ and $b$ is $$a\times b := \sum_{\sigma \text{ shuffle}} (-1)^{\sigma} (a\times b)_{\sigma} \in \tilde{C}_{m+n+1}(\A_p(HK)).$$
Extend this product by linearity to $\tilde{C}_{*}(\A_p(H))$ and $\tilde{C}_{*}(\A_p(K))$.
\end{definition}

Recall that we are aiming to apply later Lemma \ref{generalizedHomologyPropgation} which, in contrast to \cite[Lemma 0.27]{AS93}, works with a potentially proper $\N_K$-superset $X$ of $\A_p(G)$.
In many situation it will be the case where $\A_p(H)\subseteq X$, hence $\tilde{C}_*(\A_p(H))\subseteq \tilde{C}_*(X)$ and the following lemmas are automatic.
However, our overall arguments do not require this assumption, so we supply the lemmas below to also cover these cases.

\begin{lemma}
\label{lemmaShuffleProductInNSubposet}
Assume Hypothesis \ref{hypothesisCP} and let $X$ be an $\N_K$-superset.
\begin{enumerate}[label=(\roman*)]
\item If $a\in X'\cap \A_p(H)'$, $b\in\A_p(K)'$ and $\sigma$ is a shuffle, then $(a\times b)_\sigma \in X'$.
\item If $\alpha \in \tilde{C}_*(X)\cap\tilde{C}_*(\A_p(H))$ and $\beta \in \tilde{C}_*(\A_p(K))$ then $\alpha\times \beta \in \tilde{C}_*(X)\cap \tilde{C}_*(\A_p(HK))$.
\end{enumerate}
\end{lemma}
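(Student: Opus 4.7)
My plan is to reduce (i) to an element-by-element verification on the entries of $(a\times b)_\sigma$, and then obtain (ii) from (i) by linearity. The driving conceptual point is that the neighbourhood $\N(K)$ is designed to absorb exactly those products $AB$ with $A\in\A_p(H)$ and $1\neq B\in\A_p(K)$ that arise in a shuffled chain, so that every entry of $(a\times b)_\sigma$ is either already a member of the chain $a\subseteq X$ or else meets $K$ nontrivially and hence lies in $\N(K)\subseteq X$.

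To carry out (i), I would write the $i$-th entry of $(a\times b)_\sigma$ as $D_i := C_{\sigma(0)}\cdots C_{\sigma(i)}$ and let $j(i)$ (respectively $k(i)$) denote the largest $A$-block (respectively $B$-block) index occurring among $\sigma(0),\ldots,\sigma(i)$, with the convention $A_{-1} := 1 =: B_{-1}$. The shuffle condition on $\sigma$ then gives $D_i = A_{j(i)} B_{k(i)}$. That $D_0 < D_1 < \cdots$ is a strict chain in $\A_p(HK)$ is already implicit in the definition of the shuffle product \cite[Definition 0.21]{AS93} under Hypothesis \ref{hypothesisCP}, since $[H,K]=1$ and $H\cap K$ is a central $p'$-subgroup; I would cite rather than reprove this (for instance, $A_{j(i)}<A_{j(i+1)}B_{k(i)}$ is strict because any element of $A_{j(i+1)}\setminus A_{j(i)}$ lying in the product would force a nontrivial $p$-element into $H\cap K$). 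What remains is to show $D_i\in X$: if $k(i) \geq 0$, then $1 \neq B_{k(i)} \leq D_i \cap K$, so $D_i \in \N(K) \subseteq X$; and if $k(i) = -1$, then $D_i = A_{j(i)}$ is a member of the chain $a\in X'\cap \A_p(H)'$, hence lies in $X$.

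For (ii), I would note that $\tilde{C}_n(X)\cap \tilde{C}_n(\A_p(H))$ is free on $(X\cap \A_p(H))' = X'\cap \A_p(H)'$, since both chain groups are free submodules of $\tilde{C}_n(\A_p(G))$. Writing $\alpha$ and $\beta$ as $R$-linear combinations of such generating chains and applying (i) termwise, bilinearity of the shuffle product delivers $\alpha\times\beta\in \tilde{C}_*(X)$, while the membership $\alpha\times\beta\in \tilde{C}_*(\A_p(HK))$ is built into the definition. I do not foresee a genuine obstacle: the elementary-abelian and strict-chain structure of the shuffle product is inherited directly from \cite{AS93}, and the only new ingredient is the absorption observation in the previous paragraph, which is essentially the defining property of an $\N_K$-superset.
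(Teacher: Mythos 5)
Your proof is correct and takes essentially the same approach as the paper: the key observation in both is that each entry of $(a\times b)_\sigma$ either coincides with an entry of $a$ (hence lies in $X$) or contains some nontrivial $B_k\leq K$ (hence lies in $\N(K)\subseteq X$), with (ii) following by the identification $\tilde{C}_*(X)\cap\tilde{C}_*(\A_p(H)) = \tilde{C}_*(X\cap\A_p(H))$ and linearity. Your version is somewhat more explicit in writing each entry as $A_{j(i)}B_{k(i)}$, but the argument is the same.
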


\begin{proof}
If $C\in (a\times b)_\sigma$, then either $C\in a\subseteq X$ or else $C$ contains some subgroup $B\in b$.
In the later case, $C\cap K \geq B\neq 1$, so $C\in \N(K) \subseteq X$ since $X$ is an $\N_K$-superset.
This proves part (i).
Part (ii) follows from (i), by $\tilde{C}_*(X)\cap\tilde{C}_*(\A_p(H)) = \tilde{C}_*(X\cap \A_p(H))$ and a linearity argument.
\end{proof}

\begin{proposition}
[{cf. \cite[Corollary 0.23]{AS93}}]
\label{propositionShuffleProductCycles}
Under Hypothesis \ref{hypothesisCP}, if $\alpha\in \tilde{Z}_m(\A_p(H))$ and $\beta\in\tilde{Z}_n(\A_p(K))$ then $\alpha \times \beta\in \tilde{Z}_{m+n+1}(\A_p(HK))$.
In addition, if $X$ is an $\N_K$-superset and $\alpha\in \tilde{C}_*(X)$ then $\alpha\times\beta\in \tilde{Z}_{m+n+1}(X)$.
\end{proposition}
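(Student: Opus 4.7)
The plan is to handle the two assertions in sequence. The first assertion — that $\alpha\times\beta \in \tilde{Z}_{m+n+1}(\A_p(HK))$ whenever $\alpha,\beta$ are cycles — is already present in the literature as \cite[Corollary 0.23]{AS93}, and I would simply cite it. The underlying mechanism is a Leibniz-type boundary formula for the shuffle product,
\[
\partial(\alpha\times\beta) \;=\; (\partial\alpha)\times\beta \;+\; (-1)^{m+1}\alpha\times(\partial\beta),
\]
which is the standard combinatorial identity obtained by pairing each term of $\partial((a\times b)_{\sigma})$ coming from deleting a vertex with the corresponding term from an adjacent shuffle (the fact that the chain $a*b$ is well-formed uses exactly Hypothesis \ref{hypothesisCP}, since $H\cap K$ being a $p'$-group ensures $B_0 A_m > A_m$ strictly in $\A_p(HK)$). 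Plugging $\partial\alpha=0=\partial\beta$ makes the right-hand side vanish.

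The second assertion is the new content of the proposition and extends \cite[Corollary 0.23]{AS93} to a proper $\N_K$-superset $X$. The hypothesis $\alpha\in \tilde{C}_*(X)$, together with $\alpha\in\tilde{C}_*(\A_p(H))$ (which holds by the domain of the shuffle product), places $\alpha$ in $\tilde{C}_*(X)\cap \tilde{C}_*(\A_p(H))$. Applying Lemma \ref{lemmaShuffleProductInNSubposet}(ii) yields
\[
\alpha\times\beta \;\in\; \tilde{C}_*(X)\cap \tilde{C}_*(\A_p(HK)).
\]
Since the boundary operator on $X$ is the restriction of that on $\A_p(G)$, and in particular agrees with the boundary map of $\A_p(HK)$ on chains supported in $\A_p(HK)$, the cycle condition $\partial(\alpha\times\beta)=0$ established in the first part transfers immediately to $X$, giving $\alpha\times\beta\in\tilde{Z}_{m+n+1}(X)$.

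The main obstacle, in principle, is the sign and combinatorial bookkeeping required to verify the Leibniz rule on chain-level representatives; but this is handled in \cite{AS93} and we simply reuse it. The genuine content of the proposition lies rather in its second clause: by arranging $X$ to be an $\N_K$-superset, one may perform homology propagation calculations inside a subposet of $\A_p(G)$ that is often better behaved than the full Quillen poset (typically, homotopy equivalent to $\A_p(G)$ but missing the problematic points outside $\N(K)$), without sacrificing the ability to construct nontrivial cycles via the shuffle product.
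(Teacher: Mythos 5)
Your proof is correct and takes essentially the same route as the paper: the first assertion is cited directly from \cite[Corollary 0.23]{AS93}, and the second follows by applying Lemma \ref{lemmaShuffleProductInNSubposet}(ii) to place $\alpha\times\beta$ in $\tilde{C}_*(X)$ and then noting that the boundary operator of $X$ restricts that of $\A_p(G)$ (and hence of $\A_p(HK)$), so the cycle condition transfers. Your additional remark spelling out why the boundary map is compatible between $X$ and $\A_p(HK)$ is a correct, if routine, elaboration of what the paper's one-line proof leaves implicit.
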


\begin{proof}
The first part is \cite[Corollary 0.23]{AS93}, and the second part follows from Lemma \ref{lemmaShuffleProductInNSubposet}.
\end{proof}

\begin{remark}
\label{remarkaInitialDecomposition}
Let $X$ be a finite poset and $a\in X'$.
Denote by $\tilde{C}_*(X)_a$ the subgroup of $a$-initial chains and by $\tilde{C}_*(X)_{\neg a}$ the subgroup of non-$a$-initial chains.
Clearly we have a decomposition $$\tilde{C}_*(X) = \tilde{C}_*(X)_a \bigoplus \tilde{C}_*(X)_{\neg a}.$$

Moreover, if $\partial$ denotes the boundary map of its chain complex, then $$\partial(\tilde{C}_*(X)_{\neg a}) \subseteq \tilde{C}_*(X)_{\neg a}.$$

If $\gamma\in \tilde{C}_*(X)$ then $\gamma = \gamma_a + \gamma_{\neg a}$, where $\gamma_a$ corresponds to the $a$-initial part of $\gamma$, and $$\partial\gamma = \partial(\gamma_a) + \partial(\gamma_{\neg a}) = (\partial(\gamma_a))_a + (\partial(\gamma_a))_{\neg a} + \partial(\gamma_{\neg a}).$$
\end{remark}

This remark yields the following lemma.

\begin{lemma}
[{cf. \cite[Lemma 0.24]{AS93}}]
\label{lemmaFullChainBorder}
If $a\in X'$ is a full chain then $(\partial\gamma)_a = (\partial\gamma_a)_a$.
\end{lemma}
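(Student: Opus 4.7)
The plan is to reduce the identity to the subcomplex property $\partial(\tilde{C}_*(X)_{\neg a}) \subseteq \tilde{C}_*(X)_{\neg a}$ asserted in Remark \ref{remarkaInitialDecomposition}, and to prove that property from the fullness of $a$. The first step is the following characterization of $a$-initial chains in the full case: I claim that a chain $c$ of $X$ is $a$-initial if and only if $a \subseteq c$. The nontrivial direction is that $a \subseteq c$ forces every $x \in c - a$ to satisfy $x > \max a$. This is where fullness enters: since $c$ is a chain containing both $\{x\}$ and $a$, the set $\{x\} \cup a$ is itself a chain, so fullness yields $x \in a$ or $x \geq \max a$; using $x \notin a$ and $\max a \in a$ one obtains $x > \max a$. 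Equivalently, the non-$a$-initial chains are exactly those that do not contain $a$ as a subset.

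This characterization makes the subcomplex property transparent. If $c$ is a non-$a$-initial chain then $a \not\subseteq c$, so any subchain of $c$ obtained by deleting a single element still fails to contain $a$, and is therefore non-$a$-initial. By linearity this gives $\partial(\tilde{C}_*(X)_{\neg a}) \subseteq \tilde{C}_*(X)_{\neg a}$, as asserted in Remark \ref{remarkaInitialDecomposition}.

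With these two steps in hand, the identity drops out from the decomposition of the remark: writing $\gamma = \gamma_a + \gamma_{\neg a}$ and expanding,
\[
\partial \gamma \;=\; (\partial \gamma_a)_a + (\partial \gamma_a)_{\neg a} + \partial \gamma_{\neg a},
\]
where the last summand lies entirely in $\tilde{C}_*(X)_{\neg a}$ by the preceding paragraph. Taking $a$-initial components kills both $(\partial \gamma_a)_{\neg a}$ and $\partial \gamma_{\neg a}$, leaving $(\partial \gamma)_a = (\partial \gamma_a)_a$. The only truly nontrivial ingredient is the first step, where fullness is used to identify $a$-initial chains with chains containing $a$; once that equivalence is in place, everything is bookkeeping and no further obstacle arises.
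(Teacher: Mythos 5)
Your proof is correct and follows the same route the paper intends: decompose $\gamma = \gamma_a + \gamma_{\neg a}$, observe the $\neg a$-part is a subcomplex, and take $a$-initial components. In fact you have supplied a justification that the paper elides: Remark \ref{remarkaInitialDecomposition} asserts $\partial(\tilde{C}_*(X)_{\neg a}) \subseteq \tilde{C}_*(X)_{\neg a}$ without qualification, but that containment genuinely requires $a$ to be full (if $a \subseteq c$ but $c$ has an extra element below $\max a$, then $c$ is non-$a$-initial while deleting that element can produce an $a$-initial face), and your first step is exactly the observation that fullness rules this out by making ``$a$-initial'' synonymous with ``contains $a$.''
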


The following lemma generalizes \cite[Lemma 0.25(i)]{AS93} to $\N_K$-supersets.

\begin{lemma}
[{cf. \cite[Lemma 0.25]{AS93}}]
\label{lemmaShuffleProductAndaInitial}
Assume Hypothesis \ref{hypothesisCP}.
\begin{enumerate}[label=(\roman*)]
\item If $a\in \A_p(H)'$ and $b\in \A_p(K)'$ then $(a\times b)_a = (a\times b)_{\sigma = \id} = a * b$ in $\tilde{C}_*(\A_p(HK))\subseteq \tilde{C}_*(\A_p(G))$;
\item In addition, if $X$ is an $\N_K$-superset and $a\in X'$, then the conclusion of (i) remains true in $\tilde{C}_*(X)$.
\end{enumerate}
\end{lemma}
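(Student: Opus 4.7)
The plan is to handle part (i) by direct analysis of shuffles, essentially reproducing \cite[Lemma 0.25]{AS93}, and then observe that the conclusion of part (ii) is essentially automatic once one notes that $a$-initiality is an intrinsic property of a chain (it depends only on $a$ and the chain itself, not on the ambient poset in which the chain lives), and combines this with Lemma \ref{lemmaShuffleProductInNSubposet}.

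For part (i), first I would compute $(a\times b)_{\sigma = \id}$ directly. Since $A_0 < A_1 < \ldots < A_m$ we have $A_0 A_1 \cdots A_i = A_i$, and likewise $B_0 \cdots B_j = B_j$, so the identity-shuffle chain reads $(A_0 < \ldots < A_m < A_m B_0 < \ldots < A_m B_n) = a*b$, which is an $a$-initial chain. The main content is then to show that every other shuffle $\sigma \neq \id$ produces a chain $(a\times b)_\sigma$ which is not $a$-initial, so its contribution to $(a\times b)_a$ is zero. To do so, let $i$ be the smallest index with $\sigma(i) \geq m+1$; since $\sigma$ preserves order inside $\{0,\ldots,m\}$ and picks the smallest available values first, we have $\sigma(j) = j$ for $j < i$, so $C_{\sigma(0)}\cdots C_{\sigma(i-1)} = A_{i-1}$ (or the trivial subgroup if $i=0$). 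Writing $B_k := C_{\sigma(i)}$, the $i$-th entry of $(a\times b)_\sigma$ is $A_{i-1}B_k$. Using Hypothesis \ref{hypothesisCP}, $B_k \cap H \leq H\cap K$ is a $p'$-group, hence trivial (since $B_k$ is elementary abelian $p$), so $(A_{i-1}B_k)\cap H = A_{i-1}$. Consequently $A_{i-1}B_k \not\in a$ (it is not contained in $H$), and $A_{i-1}B_k \not\geq A_m$ (else $A_{i-1} \geq A_m$, contradicting $i \leq m$). This breaks $a$-initiality, as required.

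For part (ii), I would argue that the identity $(a\times b)_a = a*b$ already established in the ambient chain complex $\tilde{C}_*(\A_p(HK))$ transfers verbatim to $\tilde{C}_*(X)$. Indeed, by Lemma \ref{lemmaShuffleProductInNSubposet} we have $a\times b \in \tilde{C}_*(X)$, and each chain of $a*b$ lies in $X'$: the elements $A_0,\ldots,A_m$ belong to $X$ because $a\in X'$, while each $A_m B_j$ satisfies $(A_m B_j)\cap K \geq B_j \neq 1$, so $A_m B_j \in \N(K) \subseteq X$. Because the decomposition of $\tilde{C}_*(\A_p(HK))$ into $a$-initial and non-$a$-initial chains restricts compatibly to the subcomplex $\tilde{C}_*(X)$, the calculation of the $a$-initial part of $a\times b$ is identical in either ambient complex.

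I do not anticipate a genuine obstacle here: the only mildly delicate point is the shuffle argument in part (i), where one must correctly account for the constraint that shuffles are order-preserving on each half so that ``smallest $i$ with $\sigma(i)\geq m+1$'' forces $\sigma(j)=j$ for $j<i$. Everything in part (ii) is formal, sitting on top of Lemma \ref{lemmaShuffleProductInNSubposet} together with the fact that $\N(K)\subseteq X$ captures exactly the subgroups $A_m B_j$ needed to keep $a*b$ inside $X'$.
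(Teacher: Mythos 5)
Your proof follows the same overall route as the paper. Part (ii) matches exactly: you combine Lemma \ref{lemmaShuffleProductInNSubposet} with the observation that $a$-initiality is an intrinsic property of a chain. For part (i), where the paper is terse (it observes that any entry of $(a\times b)_\sigma$ outside $a$ contains some $B\in b$, hence lies in $\N(K)$ and not in $\A_p(H)$, and then declares the equivalence with $\sigma=\id$), you unpack the combinatorics by exhibiting an explicit violating entry at the first position $i$ at which a $B$-factor enters, using the same Hypothesis \ref{hypothesisCP} computation $B_k\cap H=1$.

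The one soft spot is the assertion $\sigma(j)=j$ for $j<i$, justified by ``$\sigma$ \ldots{} picks the smallest available values first.'' As stated, the shuffle condition only makes $\sigma$ order-preserving separately on $\{0,\ldots,m\}$ and on $\{m+1,\ldots,m+n+1\}$; it does not by itself force $\sigma(j)=j$ for $j<i$. For instance, with $m=n=1$ the shuffle $\sigma$ given by $0\mapsto1,\ 1\mapsto2,\ 2\mapsto0,\ 3\mapsto3$ has $i=1$ yet $\sigma(0)=1$. In such cases the resulting chain $(a\times b)_\sigma$ has a repeated entry and hence is zero in $\tilde{C}_*$, so contributes nothing to $(a\times b)_a$; but your argument does not make this observation, and for these shuffles the claimed description of the $i$-th entry as $A_{i-1}B_k$ is simply wrong. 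The cleanest repair, which bypasses the assertion entirely and lines up with the paper's route, is: if $(a\times b)_\sigma$ were $a$-initial it would contain $a$, so its $m+1$ smallest entries would have to be exactly $A_0<\cdots<A_m$, forcing positions $0,\ldots,m$ to carry no $B$-factor; but for $\sigma\neq\id$ some position $i\leq m$ does, by the $H\cap K$ computation. With that detail supplied your argument is correct and essentially identical to the paper's.
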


\begin{proof}
Let $\sigma$ be a shuffle and $C\in (a\times b)_{\sigma}$ with $C\notin a$.
Then $C\geq B$ for some $B\in b$, so $C\in \N(K)$ (see the proof of Lemma \ref{lemmaShuffleProductInNSubposet}).
Since $H\cap K$ is a $p'$-group, $C\notin \A_p(H)$.
Therefore, $(a\times b)_{\sigma}$ is $a$-initial if and only if $\sigma = \id$, proving item (i).
Item (ii) follows from Lemma \ref{lemmaShuffleProductInNSubposet}.
\end{proof}

We prove now the mentioned generalization of \cite[Lemma 0.27]{AS93}.
The crucial point in our lemma is that it can be applied to $\N_K$-supersets, which in general will be homotopy equivalent to $\A_p(G)$.
Recall that we are working with coefficients in $R = \ZZ$ or $\QQ$.

\begin{lemma}
\label{generalizedHomologyPropgation}
Let $G$ be a finite group.
Let $H,K\leq G$ and $X\subseteq \A_p(G)$ be such that:
\begin{enumerate}[label=(\roman*)]
\item $H$ and $K$ satisfy Hypothesis \ref{hypothesisCP};
\item $X$ is an $\N_K$-superset;
\item There exist a chain $a\in\A_p(H)'\cap X'$ and a cycle $\alpha \in \tilde{C}_m(\A_p(H))\cap \tilde{C}_m(X)$ such that the coefficient of $a$ in $\alpha$ is invertible and $\alpha\neq 0$ in $\tilde{H}_m(\A_p(H))$ (for some $m\geq -1$);
\item In addition, such $a$ is a full chain in $X$ and $X_{>\max a}\subseteq \N(K)$;
\item $\tilde{H}_*(\A_p(K))\neq 0$.
\end{enumerate}
Then $\tilde{H}_*(X)\neq 0$.

In particular, under (i), hypotheses (ii), (iii) and (iv) hold if coefficients are taken in $\QQ$, $X =\A_p(G)$ and $H$ has $(QD)_p$ exhibited by $A\in \A_p(H)$ such that $\A_p(G)_{>A}\subseteq A\times K$ (this is the hypothesis in \cite[Lemma 0.27]{AS93}, so the present result is indeed a generalization).
\end{lemma}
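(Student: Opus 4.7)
The plan is to construct a nontrivial class in $\tilde{H}_{m+n+1}(X)$ by shuffling $\alpha$ with a cycle from $\A_p(K)$, and then obstruct this class from being a boundary by analyzing the $a$-initial part. Pick a nontrivial $\beta \in \tilde{Z}_n(\A_p(K))$ via hypothesis (v), and form $\omega := \alpha \times \beta$. By Lemma \ref{lemmaShuffleProductInNSubposet} and Proposition \ref{propositionShuffleProductCycles}, $\omega \in \tilde{Z}_{m+n+1}(X) \cap \tilde{Z}_{m+n+1}(\A_p(HK))$. Assume for contradiction that $\omega = \partial\gamma$ for some $\gamma \in \tilde{C}_{m+n+2}(X)$; the goal is to show that this forces $\beta$ to bound in $\tilde{C}_*(\A_p(K))$, contradicting (v).

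The first key step is to compute $\omega_a$ in two ways. Writing $\alpha = c_a \cdot a + \alpha'$ with $\alpha'$ having no $a$-term, I claim $(\alpha' \times \beta)_a = 0$: for any $a' \neq a$ in the support of $\alpha'$, any $b$ in the support of $\beta$, and any shuffle $\sigma$, the elements of $(a' \times b)_\sigma$ lying in $\A_p(H)$ form a subset of $a'$, since any product $C_{\sigma(0)} \cdots C_{\sigma(i)}$ containing a nontrivial $B_j \in b$ meets $K$ nontrivially and hence lies outside $\A_p(H)$ (because $H \cap K$ is a $p'$-group). Thus $a \subseteq (a' \times b)_\sigma$ would force $a \subseteq a'$, and then $a = a'$ since both have $m+1$ elements, a contradiction. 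Combined with Lemma \ref{lemmaShuffleProductAndaInitial}(ii), this yields $\omega_a = c_a \cdot a * \beta$. On the other hand, Lemma \ref{lemmaFullChainBorder} (using that $a$ is full in $X$) gives $\omega_a = (\partial \gamma_a)_a$, and by hypothesis (iv) every $a$-initial chain of $X$ has the form $a * \eta$ with $\eta$ a chain in $\N(K)_{>\max a}$; hence $\gamma_a = a * \delta$ for some $\delta \in \tilde{C}_*(\N(K)_{>\max a})$. The standard boundary formula for a simplicial join gives $(\partial(a * \delta))_a = (-1)^{m+1}\, a * \partial\delta$, and the injectivity of $\eta \mapsto a * \eta$ allows us to strip the prefix $a$, producing the identity
\[
c_a \cdot \beta' \;=\; (-1)^{m+1}\, \partial\delta \quad \text{in } \tilde{C}_*(\N(K)_{>\max a}),
\]
where $\beta' := \sum_b d_b \cdot (B_0 A_m < \cdots < B_n A_m)$ is the upper slice of $a * \beta$.

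The final step applies the retract $r \colon \N(K) \to \A_p(K)$, $r(E) = E \cap K$, from Lemma \ref{lemmaInflation}. Since $A_m \leq H$ and $H \cap K$ is a $p'$-group, $A_m \cap K = 1$; hence $r(B_i A_m) = B_i$, so the induced chain map satisfies $r_*(\beta') = \beta$. Applying $r_*$ to the displayed identity and using that $c_a$ is invertible in $R$ yields $\beta = (-1)^{m+1} c_a^{-1}\, \partial r_*(\delta)$ in $\tilde{C}_*(\A_p(K))$, contradicting $[\beta] \neq 0$. Therefore $[\omega] \neq 0$ in $\tilde{H}_{m+n+1}(X)$. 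For the ``in particular'' statement, setting $X = \A_p(G)$ makes (ii) automatic; the $(QD)_p$-hypothesis on $H$ over $\QQ$ provides a nontrivial cycle $\alpha$ with a full exhibitor chain $a$ ending at $A$, and any nonzero coefficient is invertible, giving (iii); being a complete flag in $A$, $a$ remains full in $\A_p(G)$; finally, $\A_p(G)_{>A} \subseteq A \times K$ together with $A \cap K = 1$ and the modular law shows that any $E > A$ in $A \times K$ has $E \cap K \neq 1$, so $\A_p(G)_{>A} \subseteq \N(K)$, verifying the second half of (iv).

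The main obstacle is the combinatorial bookkeeping of the $a$-initial part: pinning down that exactly the pair $(a' = a, \sigma = \id)$ contributes to $\omega_a$ (which crucially exploits the $p'$-hypothesis on $H \cap K$), together with correct sign tracking in the join boundary formula. Once these identities are settled, the retract $r$ does the conceptual work by transferring the boundary relation from $X$ down to $\A_p(K)$, where the assumed nonvanishing $[\beta] \neq 0$ produces the contradiction.
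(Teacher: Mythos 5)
Your proof is correct and follows essentially the same route as the paper: form the shuffle cycle $\alpha\times\beta$ in $X$, assume it bounds, extract the $a$-initial part using fullness of $a$ and hypothesis (iv), strip the prefix $a$, and push the resulting relation through the retraction $\N(K)\to\A_p(K)$ to contradict the non-bounding of $\beta$. If anything you are slightly more explicit than the paper at one point, namely in directly verifying that the terms $(a'\times b)_\sigma$ with $a'\neq a$ contribute nothing to the $a$-initial part, which the paper handles somewhat implicitly via Lemma \ref{lemmaShuffleProductAndaInitial}.
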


We also mention that Lemma \ref{generalizedHomologyPropgation} does not require $H$ to have $(QD)_p$, which by contrast was fundamental in \cite[Lemma 0.27]{AS93}.
This assumption is relaxed within the statements of (iii) and (iv).

\begin{proof}[Proof of Lemma \ref{generalizedHomologyPropgation}]
We essentially carry out the original proof of \cite[Lemma 0.27]{AS93} inside $\tilde{C}_*(X)$ since $X$ is an $\N_K$-superset.

By hypothesis (v), there exists a cycle $\beta\in \tilde{C}_{n}(\A_p(K))$ which is not a boundary in $\tilde{C}_*(\A_p(K))$.
Choose a chain $a$ and a cycle $\alpha$ as in the hypothesis (iii).
Then $\alpha\times\beta \in \tilde{Z}_{m+n+1}(X)$ by hypotheses (i), (ii), (iii) and Proposition \ref{propositionShuffleProductCycles}.
We show that $\alpha\times\beta$ is a nontrivial cycle in the homology of $X$.

Suppose by way of contradiction that for some chain $\gamma\in \tilde{C}_{m+n+2}(X)$ we have that
\begin{equation}\label{equationBoundary}
\alpha\times\beta = \partial\gamma.
\end{equation}
Write $\beta = \sum_i q_i (B_0^i<\ldots <B^i_n)$ and $\gamma = \sum_{j\in J} p_j(C_0^j < \ldots < C_{m+n+2}^j)$.
Now take $a$-initial parts in both sides of the expression of (\ref{equationBoundary}).
\begin{equation}\label{equationainitial}
(\alpha\times\beta)_a = (\partial\gamma)_a.
\end{equation}
Note that no intermediate group lying in $X$ can be added within $a$ due to hypothesis (iv).
Let $A = \max a$.
By item (ii) of Lemma \ref{lemmaShuffleProductAndaInitial}, the left-hand-side of (\ref{equationainitial}) becomes
\begin{equation}\label{equationAlphaBeta}
(\alpha\times\beta)_{a} = q(a \times \beta) = q\sum_i q_i\,\, a\cup (AB_0^i < \ldots < AB_n^i),
\end{equation}
where by hypothesis (iii), $q\neq 0$ is the coefficient of $a$ in $\alpha$, and it is invertible.
The expression in (\ref{equationAlphaBeta}) is then equal to the right-hand-side of (\ref{equationainitial}), which using Lemma \ref{lemmaFullChainBorder} is
\begin{align}\label{equationGammaInitial}
(\partial \gamma)_{a} & = \sum_{j\in J'} p_j \sum_{k = m+1}^{m+n+2} (-1)^k \,\, a \cup (C_{m+1}^j<\ldots <\hat{C}_k^j <\ldots <C_{m+n+2})\nonumber \\
& = \sum_{j\in J'} p_j(-1)^{m+1} \sum_{k=0}^{n+1} (-1)^k a\cup (C_{m+1}^j<\ldots <\hat{C}_{k+m+1}^j <\ldots <C_{m+n+2}).
\end{align}
The hat notation $\hat{C}^j_k$ means that this term does not appear in the chain, and $J' = \{j\in J\tq a\subseteq (C_0^j < \ldots < C_{m+n+2}^j)\}$.
For $0 \leq k\leq n+1$, set $D_k^j := C_{k+m+1}^j$.
Since $C_{k+m+1}^j > A$, we have $D_k^j \in \N(K)$ by hypothesis (iv), so that $D_k^j\cap K \neq 1$.

We use now the $\sim$ operation, which maps an $a$-initial chain to its subchain beginning just after $\max a$.
Apply the $\sim$ operation on both sides of the equation of $a$-initial chains (\ref{equationainitial}) using the expressions of (\ref{equationAlphaBeta}) and (\ref{equationGammaInitial}) respectively.
The left-hand-side of (\ref{equationainitial}) becomes $q\tilde{\beta}$, where $$\tilde{\beta} = \sum_i q_i (AB_0^i < \ldots < AB_n^i) \in \tilde{C}_*(\A_p(\N(K))).$$
The right-hand-side of (\ref{equationainitial}) becomes $\partial\tilde{\gamma}$,  with $$\tilde{\gamma} = \sum_{j\in J'} p_j(-1)^{m+1}(D_0^j < \ldots < D^j_{n+1})\in \tilde{C}_*(\A_p(\N(K))).$$

Now we reduce the above homology computation in $\N(K)$ to a calculation in $\tilde{H}_*(\A_p(K))$.
Consider the homotopy equivalence given by the poset map $\varphi:\N(K)\to \A_p(K)$ of Lemma \ref{lemmaInflation}.
Denoting by $\varphi_*$ the induced chain map, we get the following equalities in $\tilde{C}_*(\A_p(K))$:
\begin{equation}\label{equationTorsion}
q\beta = \varphi_*(q\tilde{\beta}) = \varphi_*(\partial(\tilde{\gamma})) = \partial(\varphi_*(\tilde{\gamma})),
\end{equation}
where $\varphi_*(\tilde{\gamma})\in \tilde{C}_{n+1}(\A_p(K))$.
Since $q$ is invertible, we have found that $\beta$ is a boundary in the chain complex $\tilde{C}_*(\A_p(K))$, contradicting our initial assumption on $\beta$.
\end{proof}

\begin{remark}\label{remarkExtensionIntegers}
If coefficients are taken in $\QQ$, then the coefficient requirement in hypothesis (iii) is automatically guaranteed if $a\in\alpha$. 
If they are taken in $\ZZ$, then hypothesis (iii) implies that the coefficient of $a\in\alpha$ is $\pm 1$.
We may eliminate this restriction in hypothesis (iii) if we can take $\beta\in \tilde{H}_*(\A_p(K),\ZZ)$ of order  prime to the coefficient $q$ of $a$ in $\alpha$ by (\ref{equationTorsion}) (or if it is not a torsion element).
\end{remark}

\begin{remark}\label{remarkHypv}
Indeed, getting hypothesis (v) in the above lemma is in general the hard part.
In \cite{AS93}, this hypothesis is frequently obtained by applying \cite[Theorem 2.4]{AS93}, which has certain restrictions on the prime $p$.
One of our goals is to try to avoid restrictions on $p$, so we investigate for other methods to get this hypothesis.
\end{remark}

\section{The reduction \texorpdfstring{$O_{p'}(G) = 1$}{Op'(G) = 1}}\label{sectionNewCasesRationalHomology}

In this section we prove Theorem \ref{mainTheorem}, reducing the study of Quillen's conjecture to finite groups $G$ with $O_{p'}(G) = 1$.
Then we use this result to conclude that the original Quillen's conjecture \OQC{} is equivalent to the integer homology version \ZQC{} (see Theorem \ref{mainEquivalentConjectures}).

The idea of Theorem \ref{mainTheorem} is to establish a variant of \cite[Proposition 1.6]{AS93} but giving an easier proof by using Lemma \ref{generalizedHomologyPropgation}.
During the proof, we will construct a subposet $X$ of $\A_p(G)$ satisfying the hypotheses of Lemma \ref{generalizedHomologyPropgation}.
This route is comparatively elementary in contrast with \cite[Propositio 1.6]{AS93}, which quotes the strongly CFSG-dependent result \cite[Theorem 2.4]{AS93}.
Here, in our proof of Theorem \ref{mainTheorem} we will be able to avoid dependence on that result.
Instead, in the proof below, we will only need to quote Theorem \ref{theoremPSolvableCaseQDp} which depends less deeply on the CFSG.

\begin{proof}[Proof of Theorem 1]
Suppose that $O_p(G) = 1$.
Our goal is to show that $\tilde{H}_*(\A_p(G))\neq 0$, with coefficients in $R = \ZZ$ or $\QQ$.


First, note that if $H < G$ is a proper normal subgroup such that $\A_p(H)\simeq \A_p(G)$, then $O_p(H)\leq O_p(G) = 1$ and hence, by (H1)$_R$, $0\neq \tilde{H}_*(\A_p(H))\groupiso \tilde{H}_*(\A_p(G))$.
Therefore, we can suppose that no such subgroup exists:

\vspace{0.1cm}
(H3) If $H < G$ is a proper normal subgroup then $\A_p(H)\not\simeq \A_p(G)$.
\vspace{0.1cm}

Now we head to the construction of a homotopy equivalent subposet $X$ of $\A_p(G)$ to apply Lemma \ref{generalizedHomologyPropgation} and get our goal. 
To achieve this, we are going to deduce a series of properties on our group $G$ which will lead to the definition of $X$ and the choice of convenient subgroups $H$ and $K$ satisfying the hypotheses of Lemma \ref{generalizedHomologyPropgation}.
In view of Lemma \ref{lemmaCentralAndOmega1Reduction}, we can suppose that:

\vspace{0.1cm}
(1) $Z(G) = 1$ and $\Omega_1(G) = G$.
\vspace{0.1cm}

Let $L:=O_{p'}(G)$, which is nontrivial by (H2).
The claim below holds by (1) above.

\vspace{0.1cm}
(2) $C_G(L) < G$, and hence some $A\in \A_p(G)$ acts faithfully on $L$.
\vspace{0.1cm}

Recall that $A\in \A_p(G)$ acts faithfully on $L$ if and only if $C_A(L) = 1$, and $O_p(LA) = C_A(L)$.
Let 
\[\F = \{A\in\A_p(G) \tq A \text{ acts faithfully on }L\},\]
and
\[\N = \{A\in\A_p(G) \tq A \text{ acts non-faithfully on }L\}.\]
From this we deduce the following assertion.

\vspace{0.1cm}
(3) $\F$ and $\N$ are disjoint, $\A_p(G) = \F \cup \N$, $\F$ is nonempty by (2) and $\N = \N(C_G(L))$.
\vspace{0.1cm}

With an eye on the notation of Lemma \ref{generalizedHomologyPropgation}, if $A\in \F$, let $H_A:=LA$ and $K_A := C_G(LA) = C_G(H_A)$.
Note that $C_G(LA) = C_{C_G(L)}(A)$ and that $H_A\cap K_A \leq Z(H_A) = Z(LA)$ is a $p'$-group since $O_p(LA) = C_A(L) = 1$.
Therefore, we have that:

\vspace{0.1cm}
(4) $H_A$ and $K_A$ satisfy Hypothesis \ref{hypothesisCP}.
\vspace{0.1cm}

\vspace{0.1cm}
(5) If $A\in \F$, then $K_A = C_{C_G(L)}(A)$ and $\N_{>A} = \N(K_A)_{>A}$.
\vspace{0.1cm}

Since $C_G(LA) \leq C_G(L)$, we have that $\N(K_A) \subseteq \N(C_G(L)) = \N$.
This proves that $\N(K_A)_{>A}\subseteq \N_{>A}$.
For the other containment, if $B\in \N_{>A}$ then $B\leq C_G(A)$ and $C_B(L)\neq 1$.
Hence, $1\neq C_B(L) = C_G(A)\cap B \cap C_G(L) = B\cap C_G(LA) = B\cap K_A$.

Now we will see how the configuration of Lemma \ref{generalizedHomologyPropgation} brings new ideas beyond the analogous result of \cite{AS93}.
We show next how to get hypothesis (v) of this lemma (see Remark \ref{remarkHypv}).

If for all $A\in \F$ we have $O_p(C_{C_G(L)}(A)) = O_p(K_A) \neq 1$, by (3) and Lemma \ref{lemmaRetract}, then $\A_p(G) \simeq \A_p(C_G(L))$.
This contradicts (H3) since $C_G(L)$ is normal in $G$.
In consequence, we get:

\vspace{0.1cm}
(6) There is $A\in \F$ with $O_p(K_A) = 1$.
\vspace{0.1cm}

Now we are going to build the subposet $X$ by removing points of $\A_p(G)$ with contractible link, so that we preserve the homotopy type.
By (6), we can take $A\in \F$ of maximal $p$-rank subject to $O_p(K_A) = 1$.
Let $X = \A_p(G) - \F_{>A}$.

\vspace{0.1cm}
(7) If $B\in \A_p(G) - X$ then $X_{>B} = \N_{>B}$ is contractible.
In particular, $X\simeq \A_p(G)$.
\vspace{0.1cm}

If $B\in \A_p(G)-X = \F_{>A}$ then $X_{>B} = \N_{>B}$ since $\F_{>B}\subseteq \F_{>A}$ and $\A_p(G)_{>B} = \F_{>B} \cup \N_{>B}$.
It follows from Lemma \ref{lemmaLinksInflation} that $$X_{> B} = \N_{ > B} \simeq \A_p(C_{C_G(L)}(B)) = \A_p(K_{B}) \simeq *$$ since $O_p(K_B)\neq 1$.
Finally, by Proposition \ref{propositionTheoremAPosets}, $X\simeq \A_p(G)$.


\vspace{0.1cm}
(8) $\A_p(LA)\subseteq X$.
\vspace{0.1cm}

Let $B\in \A_p(LA)$.
Since $A$ is a Sylow $p$-subgroup of $LA$, there exists $g\in L$ such that $B\leq A^g$, so $C_B(L) \leq C_{A^g}(L) = (C_A(L))^g = 1$.
Therefore $B$ is faithful on $L$, that is, $B\in \F$, and $|B|\leq |A|$.
Hence $B\notin \F_{>A}$, which means that $B\in X$.

Now we check the hypotheses of Lemma \ref{generalizedHomologyPropgation} with $H = H_A = LA$ and $K = K_A = C_G(LA)$.
\begin{enumerate}[label=(\roman*)]
\item It holds by (4).
\item If $B\notin X$, then $B$ acts faithfully on $L$, so $1 = C_B(L) \geq C_B(LA) = B\cap K$.
In consequence, $\N(K)\subseteq X$ and $X$ is an $\N_K$-superset.
\item By Theorem \ref{theoremPSolvableCaseQDp} applied to $H = LA$, we can pick a nonzero element $\alpha\in \tilde{H}_m(\A_p(H))$, where $m = m_p(A)-1$.
Since $\tilde{Z}_m(\A_p(H)) = \tilde{H}_m(\A_p(H))$, $\alpha$ is actually a cycle, and by a dimension argument, it involves a full chain $a$.
Since $A$ is a Sylow $p$-subgroup of $H$, after conjugating $\alpha$, we may suppose that $A\in a$.
Moreover, by (8) $\tilde{C}_*(\A_p(H)) \subseteq \tilde{C}_*(X)$.\\
The coefficient of $a$ in $\alpha$ clearly is invertible if $R = \QQ$.
For $R = \ZZ$, this is also true, but it less immediate and depends on the results of \cite{Diaz}.
See below for further details.
\item By (iii) $a$ is a full chain, $A = \max a$ and $X_{>A} = \N(K_A)_{>A}$ by (3) and (5).
\item It holds by (H1)$_R$ since $O_p(K_A) = 1$ by the choice of $A$, and $K_A = C_G(LA) \leq C_G(L) < G$ by (2).
\end{enumerate}
By (7) and Lemma \ref{generalizedHomologyPropgation}, $\tilde{H}_*(\A_p(G))\groupiso \tilde{H}_*(X)\neq 0$.

We explain now how to obtain the invertible coefficient for $a$ in $\alpha$ if $R=\ZZ$, in order to fulfil hypothesis (iii) of Lemma \ref{generalizedHomologyPropgation}.
In the proof of (iii) above, we fix first $\alpha$, and then we choose $a\in\alpha$ a full chain.
Hence, it remains to show that some $a\in\alpha$ has coefficient equals to $\pm 1$ (see Remark \ref{remarkExtensionIntegers}).
This is possible by using the explicit description of a nontrivial cycle that D{\'i}az Ramos gave for the $p$-solvable case in \cite{Diaz}.
It follows from the proofs of \cite[Theorems 5.1, 5.3 \& 6.6]{Diaz}.

This concludes the proof of Theorem \ref{mainTheorem} for both version of the conjecture.
\end{proof}

Now we prove Theorem \ref{mainEquivalentConjectures}.
We recall first a result on the fundamental group of the $p$-subgroup posets and the almost simple case of the conjecture.

\begin{theorem}[{cf. \cite[Theorem 5.2]{MP19}}]\label{theoremFundamentalGroup}
If $G$ is not an almost simple group and $O_{p'}(G) = 1$, then $\pi_1(\A_p(G))$ is a free group.
\end{theorem}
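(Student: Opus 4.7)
The plan is to reduce to a configuration where $F^*(G)$ decomposes as a direct product of at least two non-abelian simple groups and then exploit the resulting join structure of $\A_p(F^*(G))$, controlling the remaining attachments via the inflation/centralizer technique. For the reduction: if $O_p(G)\neq 1$ then $\A_p(G)$ is contractible by \cite[Proposition 2.4]{Qui78} and $\pi_1(\A_p(G))=1$ is trivially free. So assume $O_p(G)=1$; together with $O_{p'}(G)=1$ this gives $F(G)=1$, whence $F^*(G)=E(G)=L_1\cdots L_r$ is a central product of quasisimple components. Each $Z(L_i)$ is normal in $G$ and nilpotent, hence lies in $F(G)=1$; thus every $L_i$ is non-abelian simple and $F^*(G)=L_1\times\cdots\times L_r$. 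Since $C_G(F^*(G))=Z(F^*(G))=1$, conjugation embeds $G$ into $\Aut(F^*(G))$; if $r=1$ this makes $G$ almost simple, contradicting the hypothesis, so $r\geq 2$.

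By Proposition \ref{propJoin}, $\A_p(F^*(G))\simeq \A_p(L_1)*\cdots *\A_p(L_r)$, a join of $r\geq 2$ non-empty finite posets. A general topological fact is that the fundamental group of any join of finite simplicial complexes is free: one has $X*Y\simeq \Sigma(X\wedge Y)$ (after adding base points), and the unreduced suspension of a finite complex with $n$ path components is homotopy equivalent to a wedge of $n$ simply-connected suspensions together with $n-1$ circles, so its $\pi_1$ is free. Hence $\pi_1(\A_p(F^*(G)))$ is free, and by Lemma \ref{lemmaInflation} so is $\pi_1(\N(F^*(G)))$.

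It remains to build $\A_p(G)$ from $\N(F^*(G))$ by adjoining the elements $E\in\A_p(G)$ with $E\cap F^*(G)=1$, ordered by size. By Lemma \ref{lemmaLinksInflation} and the attachment description following it, each such $E$ is attached along the join of its lower link (a wedge of spheres of dimension $m_p(E)-2$) with its upper link $\A_p(C_{F^*(G)}(E))$. When $E$ stabilizes each $L_i$ setwise, $C_{F^*(G)}(E)=\prod_i C_{L_i}(E_i)$ and its Quillen poset is again a join, with free $\pi_1$; when $E$ permutes components, an orbit decomposition expresses the centralizer as a product of diagonal-fixed subgroups, to which induction on $|G|$ applies. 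Iterating Seifert--van Kampen over the attachment filtration, each cone attachment kills a free factor of the current $\pi_1$, preserving freeness.

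The main obstacle will be the low-rank case $m_p(E)=1$, where the lower link is empty and the attachment of $E$ simply cones off its upper link, potentially identifying previously independent loops: the relations introduced are the generators of $\pi_1(\A_p(C_{F^*(G)}(E)))$ as embedded in the current $\pi_1$, and one must verify that this embedded subgroup is a free factor so that the quotient stays free. A careful orbit-and-centralizer analysis, combined with induction on $|G|$ applied to the smaller diagonal subgroups arising when $E$ permutes components, is required to secure this free-factor property at every step and complete the argument.
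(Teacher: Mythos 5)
This theorem is stated in the paper as a quotation of \cite[Theorem 5.2]{MP19}; the paper gives no proof of its own, so there is no internal proof to compare against. Evaluating your proposal on its own terms, the reduction is sound: $O_p(G)=1$ may be assumed, then $F(G)=1$ forces $F^*(G)=E(G)=L_1\times\cdots\times L_r$ with each $L_i$ simple (and of order divisible by $p$, since otherwise a subnormal $p'$-subgroup would lie in $O_{p'}(G)=1$), and $r\geq 2$ since $G$ is not almost simple. The observation that $\pi_1$ of a join of two nonempty finite complexes is free (of rank $(a-1)(b-1)$, where $a,b$ are the numbers of components) correctly gives that $\pi_1(\N(F^*(G)))\cong\pi_1(\A_p(F^*(G)))$ is free.

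However, the closing step has a genuine gap, and you acknowledge it yourself. When a point $E$ with $E\cap F^*(G)=1$ is attached, Seifert--van Kampen gives $\pi_1(X_i)\cong\pi_1(X_{i-1})/\langle\!\langle\,\mathrm{im}\,\pi_1(\mathrm{Lk}_{X_{i-1}}(E))\,\rangle\!\rangle$, and the quotient of a free group by the normal closure of an arbitrary subgroup (even one arising from a subcomplex inclusion) is not free in general. Your claim that ``each cone attachment kills a free factor of the current $\pi_1$'' is precisely what must be proved, and it is not automatic: it requires controlling, at every stage of the filtration, the image of $\pi_1$ of the link inside the running fundamental group. You flag the case $m_p(E)=1$, but the cases $m_p(E)=2$ with disconnected upper link, and $m_p(E)\in\{2,3\}$ with empty upper link, are equally delicate (the lower link alone is not simply connected there). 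Your final paragraph explicitly defers this verification --- the ``free-factor property at every step'' --- so what you have is a plausible programme rather than a proof; the actual content of \cite[Theorem 5.2]{MP19} is exactly the resolution of that verification, which the present paper does not reprove.
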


\begin{theorem}[{\cite{AK90}}]\label{theoremAlmostSimpleCase}
If $G$ is an almost simple group, then $\tilde{H}_*(\A_p(G),\QQ)\neq 0$.
\end{theorem}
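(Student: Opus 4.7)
The plan is to reduce to the Classification of Finite Simple Groups applied to the socle $L := F^*(G)$. First I would observe that, since $G$ is almost simple, $C_G(L) = Z(L) = 1$; this forces $O_p(G) \cap L \trianglelefteq L$ to be trivial (as $L$ is non-abelian simple and hence not a $p$-group), and hence $O_p(G) \leq C_G(L) = 1$, so the hypothesis $O_p(G) = 1$ of the conjecture is automatic. It remains to exhibit a nontrivial rational cycle in $\tilde{H}_*(\A_p(G),\QQ)$.

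The strategy splits into two steps: first prove the result for the simple group $L$ itself, and then promote the nonvanishing homology from $\A_p(L)$ to $\A_p(G)$. For the promotion step I would use the inclusion $\A_p(L) \subseteq \A_p(G)$ together with the conjugation action of $G/L \leq \Out(L)$ on $\A_p(L)$; averaging a nonzero class over the $G/L$-action (possible precisely because coefficients are in $\QQ$, so $[G:L]$ is invertible) either already lives in the $G/L$-invariants, or can be replaced by a summand in a rational isotypic decomposition. One must then verify that the resulting class, viewed inside $\A_p(G)$, does not bound, which typically requires knowing how the new $p$-subgroups of $G$ outside $L$ interact with the support of the cycle; for most almost simple extensions $G/L$ is small enough that this is controllable.

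To produce the cycle in $\A_p(L)$ I would split by the type of $L$ dictated by the classification. If $L$ is of Lie type in the defining characteristic $p$, then $\A_p(L)$ is homotopy equivalent to the Tits building of $L$, which by Solomon-Tits is a bouquet of top-dimensional spheres, yielding $(QD)_p$ at once. If $L$ is alternating or sporadic one proceeds with the known local structure: for $A_n$ I would use the elementary abelian $p$-subgroups generated by disjoint $p$-cycles and the join decomposition of $\A_p(A_n)$ they produce; for the sporadic cases one treats each simple group individually, exhibiting $(QD)_p$ through an explicit maximal $p$-local computation. The remaining---and hardest---case is $L$ of Lie type in characteristic $\ell \neq p$: here I would pick a maximal torus $T$ of $L$ whose order is divisible by $p$, take $T_p \in \Syl_p(T)$ and form a $p$-solvable subgroup $H_0 = T_{p'} \rtimes A$ with $A \leq N_L(T_p)$ elementary abelian acting faithfully on $T_{p'}$, apply Theorem \ref{theoremPSolvableCaseQDp} to $H_0$ to obtain a cycle exhibiting $(QD)_p$ there, and finally invoke Lemma \ref{generalizedHomologyPropgation} with $H = H_0$ and $K$ a centralizer complement inside $L$ to propagate the nontrivial homology up to $\A_p(L)$.

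The main obstacle will be this cross-characteristic Lie-type case, because it requires a careful choice of torus and of the elementary abelian $A$ so that both the fullness hypothesis (iv) and the nonvanishing hypothesis (v) of Lemma \ref{generalizedHomologyPropgation} are satisfied simultaneously, and then a verification (running through each classical and exceptional family) that the resulting cycle survives inflation into $\A_p(L)$ rather than bounding against cells coming from other semisimple classes. This is precisely where the Aschbacher-Kleidman argument invokes the classification most heavily and leverages the detailed theory of centralizers of semisimple elements in groups of Lie type.
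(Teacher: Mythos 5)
The paper does not prove this statement itself: Theorem \ref{theoremAlmostSimpleCase} is quoted directly from Aschbacher--Kleidman \cite{AK90} as a known input, so there is no internal proof to compare against. Your sketch has the broad shape one would expect for reproving it (CFSG case analysis, Tits buildings in defining characteristic, explicit $(QD)_p$ for alternating and sporadic groups, propagation out of $p$-solvable local subgroups in cross characteristic), and your initial observation that $O_p(G)=1$ for almost simple $G$ is correct. However, the ``promotion step'' from $\A_p(L)$ to $\A_p(G)$ contains a genuine gap. Averaging a nonzero class over the $G/L$-action buys you nothing: the inclusion $\A_p(L)\hookrightarrow\A_p(G)$ is a subposet inclusion, not a covering or a quotient, so there is no transfer and the induced map on rational homology need not be injective. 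When $G$ contains an outer automorphism $f$ of order $p$ of $L$, new elementary abelian groups such as $\gen f\times A$ appear and provide cells above cycles of $\A_p(L)$ that can make them bound. This is exactly the phenomenon that forces the paper to do real work in Theorems \ref{theoremExtensionOfExcludedCases} and \ref{theoremPRank1Components}, extracting the offending elements $E$ with $O_p(C_L(E))\neq 1$ before invoking Lemma \ref{generalizedHomologyPropgation}. The valid promotion principle is Remark \ref{remarkQDpPropagationBySubgroups}, which requires $m_p(L)=m_p(G)$; the case $m_p(G)>m_p(L)$ (precisely when $G$ carries outer $p$-elements) needs a different argument, and your appeal to ``$G/L$ small enough that this is controllable'' is precisely the part that has to be proved, not assumed.

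A secondary omission: you implicitly assume $p\mid|L|$ so that $\A_p(L)\neq\emptyset$. Since the Sylow $p$-subgroups of $G$ could in principle map isomorphically into $G/L\leq\Out(L)$, you must either verify that $p\mid|G|$ forces $p\mid|L|$ for simple $L$, or supply a separate argument for the degenerate case $\A_p(L)=\emptyset$.
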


\begin{proof}[Proof of Theorem \ref{mainEquivalentConjectures}]
We only need to prove that if the original conjecture \OQC{} holds for all finite groups, then the integer homology version \ZQC{} holds.

Let $G$ be a group with $O_p(G) = 1$.
We shall prove that $\tilde{H}_*(\A_p(G),\ZZ)\neq 0$.
By induction, we can assume that \ZQC{} holds for every group $H$ with $|H|<|G|$, so $G$ satisfies (H1)$_\ZZ$.

If $O_{p'}(G)\neq 1$, then we are done by Theorem \ref{mainTheorem}.
Suppose that $O_{p'}(G) = 1$.
Further, by Theorem \ref{theoremAlmostSimpleCase}, we can suppose that $G$ is not an almost simple group.
In view of Theorem \ref{theoremFundamentalGroup}, we conclude that $\pi_1(\A_p(G))$ is a free group.
In this case, note that $H_1(\A_p(G),\ZZ) = 0$ if and only if $\pi_1(\A_p(G)) = 1$.
Since $G$ satisfies \OQC{}, some of its homotopy groups are nontrivial, so by the Hurewicz theorem $\tilde{H}_*(\A_p(G),\ZZ)\neq 0$.
\end{proof}

We use the rational version of Theorem \ref{mainTheorem} to extend some results of \cite{PSV} on the integer conjecture \ZQC{} to \QC.
Below we recall one of the main results of \cite{PSV}.

\begin{theorem}
[{\cite[Corollary 3.3]{PSV}}]\label{theoremQuillenDimension2}
Suppose that $\K(\S_p(G))$ admits a $2$-dimensional and $G$-invariant subcomplex homotopy equivalent to itself.
Then $G$ satisfies \ZQC.
\end{theorem}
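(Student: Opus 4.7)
The plan is to argue by contradiction, combining the hypothesis on the 2-dimensional subcomplex $K$ with the structural information on $\pi_1(\A_p(G))$ provided by Theorem \ref{theoremFundamentalGroup}. Assume $O_p(G) = 1$; via the Quillen equivalence $\A_p(G) \hookrightarrow \S_p(G)$ and the hypothesis that $K \hookrightarrow \K(\S_p(G))$ is a homotopy equivalence, showing $\tilde{H}_*(\A_p(G), \ZZ) \neq 0$ amounts to showing $\tilde{H}_*(K, \ZZ) \neq 0$. Suppose for contradiction that $K$ is $\ZZ$-acyclic.

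First I would reduce to a minimal counterexample, so that $G$ satisfies (H1)$_\ZZ$. By Theorem \ref{mainTheorem} we may then assume $O_{p'}(G) = 1$. If $G$ were almost simple, Theorem \ref{theoremAlmostSimpleCase} (Aschbacher--Kleidman) would give $\tilde{H}_*(\A_p(G), \QQ) \neq 0$; since the top-dimensional integer homology of a finite complex is always torsion-free, this would upgrade to $\tilde{H}_*(\A_p(G), \ZZ) \neq 0$, contrary to assumption. Hence $G$ is not almost simple.

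Under these reductions Theorem \ref{theoremFundamentalGroup} applies and $\pi_1(K) \groupiso \pi_1(\A_p(G))$ is a free group. Since $H_1(K, \ZZ) = \pi_1(K)^{ab} = 0$, this free group must itself be trivial, so $K$ is simply connected. Being then a simply connected, 2-dimensional and $\ZZ$-acyclic CW-complex, the Hurewicz and Whitehead theorems force $K \homotequiv *$.

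The final step is the crux, and the place where I expect the main obstacle: to derive a contradiction from the existence of a finite group $G$ with $O_p(G) = 1$ acting simplicially on a contractible 2-dimensional complex $K$ whose cell stabilizers are $p$-local subgroups of $G$. I would approach this by combining Smith fixed-point theory (since $K$ is also $\mathbb{F}_p$-acyclic, for every non-trivial $p$-subgroup $P \leq G$ the subcomplex $K^P$ is $\mathbb{F}_p$-acyclic, hence non-empty) with Oliver-style constraints on finite group actions on low-dimensional acyclic complexes. The goal is to propagate these $p$-fixed subcomplexes through the $G$-action to produce a normal nontrivial $p$-subgroup, contradicting $O_p(G) = 1$. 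Bridging the homotopy-theoretic conclusion $K \homotequiv *$ with the algebraic hypothesis $O_p(G) = 1$ is the delicate point that needs a careful analysis of the cell stabilizers.
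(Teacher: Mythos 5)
The statement is quoted from the external reference [PSV, Corollary 3.3]; the present paper does not reprove it. Your proposal attempts a self-contained proof, and it has two genuine gaps.

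First, the inductive set-up does not go through. You appeal to ``a minimal counterexample, so that $G$ satisfies (H1)$_\ZZ$,'' and then invoke Theorem \ref{mainTheorem} to obtain $O_{p'}(G)=1$ and Theorem \ref{theoremFundamentalGroup} to obtain that $\pi_1(\A_p(G))$ is free. But (H1)$_\ZZ$ is the assertion that every proper subgroup and every proper central quotient of $G$ satisfies \ZQC, and a minimal counterexample to Theorem \ref{theoremQuillenDimension2} gives you much less: it only tells you that smaller groups \emph{which also admit a $2$-dimensional $G$-invariant homotopy-equivalent subcomplex} satisfy \ZQC. That extra hypothesis is not inherited by subgroups or central quotients, so (H1)$_\ZZ$ does not follow. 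In other words, what you would actually be proving is a conditional version with (H1)$_\ZZ$ added as a hypothesis --- which is essentially the paper's Corollary \ref{coroExtension2Dimensional} in $\ZZ$-flavour, not Theorem \ref{theoremQuillenDimension2} itself. Indeed the paper \emph{needs} Theorem \ref{theoremQuillenDimension2} unconditionally in order to derive Corollary \ref{coroExtension2Dimensional} from it, so the dependence cannot run in the direction your argument requires.

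Second, and more decisively, the step you label as ``the crux'' is simply not carried out. After correctly deducing (modulo the first gap) that $K$ would be a contractible $2$-dimensional complex with a simplicial $G$-action, you describe a plan --- Smith theory plus ``Oliver-style constraints'' --- but provide no argument that such an action must have a $G$-fixed point, nor how the exceptional groups that \emph{can} act fixed-point-freely on $\ZZ$-acyclic $2$-complexes (the Oliver--Segev list) are to be excluded in the present setting. This is precisely where the content of [PSV, Corollary 3.3] lies: their argument applies the Oliver--Segev classification directly to the $\ZZ$-acyclic case and rules out the exceptional simple groups by examining the structure of the $p$-subgroup poset, without any detour through $\pi_1$, Hurewicz, or contractibility. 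Your reduction to the contractible case is thus both unnecessary for the source's argument and, as presented, not a substitute for it. A minor additional remark: the passage from $\tilde{H}_*(\A_p(G),\QQ)\neq 0$ to $\tilde{H}_*(\A_p(G),\ZZ)\neq 0$ in the almost-simple case is better justified by universal coefficients (tensoring with $\QQ$ kills nothing but torsion) than by torsion-freeness of the top homology group, since Aschbacher--Kleidman need not produce a class in top degree.
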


\begin{corollary}
[{\cite[Corollary 3.4]{PSV}}]\label{coroPRank3}
The integer Quillen's conjecture \ZQC{} holds for groups of $p$-rank at most $3$.
\end{corollary}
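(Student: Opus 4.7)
The plan is to apply Theorem \ref{theoremQuillenDimension2} with the subcomplex $\K(\A_p(G))$ itself playing the role of the $2$-dimensional $G$-invariant subcomplex of $\K(\S_p(G))$. Since $\A_p(G)\subseteq \S_p(G)$ is a subposet, the order complex $\K(\A_p(G))$ is naturally a subcomplex of $\K(\S_p(G))$, and it is $G$-invariant because the conjugation action of $G$ on $\S_p(G)$ preserves the property of being a nontrivial elementary abelian $p$-subgroup.

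The dimension count is immediate from the definition of $p$-rank recalled in Section~2: one has
\[
\dim \K(\A_p(G)) \;=\; m_p(G) - 1 \;\leq\; 2,
\]
so $\K(\A_p(G))$ is indeed at most $2$-dimensional whenever $m_p(G)\leq 3$. The homotopy equivalence $\K(\A_p(G))\homotequiv \K(\S_p(G))$ is exactly the content of Quillen's result \cite[Proposition 2.1]{Qui78}, already quoted in the preliminaries. Thus the three hypotheses of Theorem \ref{theoremQuillenDimension2} are satisfied by $\K(\A_p(G))$, and the conclusion $G\models{\ZQC}$ follows directly.

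There is no real obstacle here: the corollary is essentially a tautological specialization of Theorem \ref{theoremQuillenDimension2}, where the ``difficult'' hypothesis of exhibiting a low-dimensional invariant subcomplex is handed to us for free by the dimension bound on $\K(\A_p(G))$. The only thing worth verifying carefully is that one genuinely uses $\K(\A_p(G))$ as a subcomplex of $\K(\S_p(G))$ (and not, say, as an abstract homotopy-equivalent object), but this is automatic from the inclusion of posets.
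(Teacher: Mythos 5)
Your proof is correct and is the straightforward specialization of Theorem \ref{theoremQuillenDimension2}, which is the route the paper takes (the paper itself gives no proof, simply citing \cite[Corollary 3.4]{PSV} as following from \cite[Corollary 3.3]{PSV} in exactly this way). The only cosmetic remark is that for $m_p(G)\le 2$ the subcomplex $\K(\A_p(G))$ has dimension strictly less than $2$, but ``$2$-dimensional'' in the hypothesis of Theorem \ref{theoremQuillenDimension2} is understood as ``of dimension at most~$2$,'' so this poses no obstruction.
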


\begin{corollary}\label{coroExtension2Dimensional}
Suppose that $G$ satisfies (H1)$_\QQ$ and that:
\begin{quote}
(H2) \quad $\K(\S_p(G))$ admits a $2$-dimensional and $G$-invariant subcomplex homotopy equivalent to itself.
\end{quote}
Then $G$ satisfies \QC.
\end{corollary}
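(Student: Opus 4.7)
The plan is to obtain \QC{} under (H1)$_\QQ$ and the $G$-invariant $2$-dimensional subcomplex hypothesis by chaining together three results already in hand: the $O_{p'}(G)$ reduction (Theorem \ref{mainTheorem}), the freeness of the fundamental group (Theorem \ref{theoremFundamentalGroup}), and the integer version of Quillen's conjecture in this setup (Theorem \ref{theoremQuillenDimension2}). Assume $O_p(G)=1$; the goal is $\tilde{H}_*(\A_p(G),\QQ)\neq 0$. First I would perform the standard reductions: under (H1)$_\QQ$, Theorem \ref{mainTheorem} with $R=\QQ$ lets one assume $O_{p'}(G)=1$, and Theorem \ref{theoremAlmostSimpleCase} lets one further assume that $G$ is not almost simple.

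With these reductions available, Theorem \ref{theoremFundamentalGroup} applies and forces $\pi_1(\A_p(G))$ to be a free group. The key observation is then that this, combined with (H2), makes every reduced integer homology group of $\A_p(G)$ torsion-free. Indeed, since $\pi_1(\A_p(G))$ is free, $H_1(\A_p(G),\ZZ)$ is its abelianization and hence free abelian; since $\K(\S_p(G))$ admits a $2$-dimensional subcomplex homotopy equivalent to itself and $\A_p(G)\homotequiv \S_p(G)$, the integer homology of $\A_p(G)$ vanishes above dimension $2$ and $H_2$ is free as the top-dimensional homology of a $2$-complex; reduced $H_0$ is free as always.

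Finally, Theorem \ref{theoremQuillenDimension2} supplies $\tilde{H}_*(\A_p(G),\ZZ)\neq 0$, and the torsion-freeness just established promotes this, via the universal coefficient theorem, to $\tilde{H}_*(\A_p(G),\QQ)\neq 0$, which is \QC. The main (and essentially only) nontrivial point in this plan is the torsion-freeness of $H_1$; that is precisely what Theorem \ref{theoremFundamentalGroup} is tailor-made to yield, and the rest is a direct assembly of pieces already developed in the paper. Note that the $G$-invariance assumption in (H2) is used only indirectly, through its role in invoking Theorem \ref{theoremQuillenDimension2}.
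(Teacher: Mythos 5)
Your proposal is correct and is essentially identical to the paper's own proof: the same reductions via Theorem \ref{mainTheorem} and Theorem \ref{theoremAlmostSimpleCase}, the same use of Theorem \ref{theoremQuillenDimension2} to get nontrivial $\ZZ$-homology, and the same torsion-freeness argument combining the $2$-dimensionality from (H2) with the freeness of $\pi_1(\A_p(G))$ from Theorem \ref{theoremFundamentalGroup}. The only cosmetic difference is that the paper also explicitly invokes Lemma \ref{lemmaCentralAndOmega1Reduction} to assume $Z(G)=1$, but this is automatic once $O_p(G)=O_{p'}(G)=1$, so your omission is harmless.
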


\begin{proof}
Suppose that $O_p(G) = 1$.
We show that $\A_p(G)$ is not $\QQ$-acyclic.
By Lemma \ref{lemmaCentralAndOmega1Reduction} and Theorems \ref{mainTheorem} and \ref{theoremAlmostSimpleCase}, we may further assume that $Z(G) = 1$, $O_{p'}(G) = 1$ and that $G$ is not an almost simple group.

On the other hand, by Theorem \ref{theoremQuillenDimension2}, $\A_p(G)$ is not $\ZZ$-acyclic.
In order to prove that it is not $\QQ$-acyclic, we show that $\A_p(G)$ has free abelian homology.
Since it has the homotopy type of a $2$-dimensional complex, we only need to verify that $H_n(\A_p(G),\ZZ)$ is a free group for $n = 0,1,2$.

Clearly $H_2(K,\ZZ)$ and $H_0(K,\ZZ)$ are free abelian groups.
Finally, by Theorem \ref{theoremFundamentalGroup}, $\pi_1(\A_p(G))$ is a free group, so its abelianization $H_1(K,\ZZ)$ is a free abelian group.
This completes the proof.
\end{proof}

We can extend Corollary \ref{coroPRank3} of the $p$-rank $3$ case to the rational version \QC.

\begin{corollary}\label{coroStrongConjecturePRank3}
The rational Quillen's conjecture \QC{} holds for groups of $p$-rank at most $3$.
\end{corollary}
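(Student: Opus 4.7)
The plan is to derive Corollary \ref{coroStrongConjecturePRank3} directly from Corollary \ref{coroExtension2Dimensional} by an easy induction on $|G|$. Take $G$ with $m_p(G) \leq 3$ and suppose, inductively, that \QC{} holds for every finite group of strictly smaller order. If $O_p(G) \neq 1$ there is nothing to prove, so I may assume $O_p(G) = 1$ and it suffices to verify the two hypotheses of Corollary \ref{coroExtension2Dimensional}.

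Hypothesis (H2) of that corollary is automatic in our setting: since $m_p(G) \leq 3$, the order complex $\K(\A_p(G))$ has dimension at most $2$; it is a subcomplex of $\K(\S_p(G))$, it is $G$-invariant under conjugation, and $\A_p(G) \hookrightarrow \S_p(G)$ is a $G$-equivariant homotopy equivalence by \cite[Proposition 2.1]{Qui78}. So $\K(\A_p(G))$ itself is the desired $2$-dimensional $G$-invariant subcomplex of $\K(\S_p(G))$ homotopy equivalent to it.

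For hypothesis (H1)$_\QQ$, a proper subgroup $H < G$ has $m_p(H) \leq m_p(G) \leq 3$ and $|H| < |G|$, so by induction $H$ satisfies \QC. The central quotient case is the only place that needs a small observation: since $O_p(G) = 1$, any nontrivial $Z \leq Z(G)$ must be a $p'$-group (otherwise a central $p$-element would lie in $O_p(G)$). Then Proposition \ref{propCenterReduction}(2) gives $\A_p(G/Z) \cong \A_p(G)$ as posets and $O_p(G/Z) \cong O_p(G)$, so $m_p(G/Z) = m_p(G) \leq 3$ and $|G/Z| < |G|$; by induction $G/Z$ satisfies \QC. Hence (H1)$_\QQ$ holds for $G$, Corollary \ref{coroExtension2Dimensional} applies, and $G$ satisfies \QC, closing the induction.

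I do not foresee a genuine obstacle: the real content has already been extracted in Corollary \ref{coroExtension2Dimensional}, which bundles the integer result (Corollary \ref{coroPRank3}) with the freeness of $\pi_1(\A_p(G))$ from Theorem \ref{theoremFundamentalGroup} and the reductions from Theorem \ref{mainTheorem} and Lemma \ref{lemmaCentralAndOmega1Reduction}. The only point requiring momentary care is the $p'$-ness of $Z \leq Z(G)$ that makes the central-quotient clause of (H1)$_\QQ$ tractable via the inductive hypothesis.
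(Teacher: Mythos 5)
Your argument is correct and matches the paper's (implicit) intent: the paper gives no explicit proof of this corollary, treating it as immediate from Corollary~\ref{coroExtension2Dimensional} in exactly the way you fill in. Your two observations — that $\K(\A_p(G))$ itself serves as the required $2$-dimensional $G$-invariant subcomplex when $m_p(G)\leq 3$, and that under $O_p(G)=1$ every nontrivial $Z\leq Z(G)$ is a $p'$-group so central quotients stay within the inductive scope — are precisely the points that need to be spelled out to verify (H2) and (H1)$_\QQ$, and your induction on $|G|$ restricted to groups of $p$-rank at most $3$ is sound.
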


\section{Particular cases}\label{sectionSmallCases}

In this section we prove Theorem \ref{theoremExtensionOfExcludedCases}, which allows to eliminate components isomorphic to $L_2(2^3)$ ($p = 3$), $U_3(2^3)$ ($p = 3$) and $\Sz(2^5)$ ($p = 5$) for the study of Quillen's conjecture.
For that purpose, we will use the structure of the centralizers of outer automorphisms of order $p$ in these simple groups, which can be found in \cite{GL83}.
Recall that $R = \ZZ$ or $\QQ$.

\begin{theorem}\label{theoremExtensionOfExcludedCases}
Suppose that $G$ satisfies (H1)$_R$ and that:
\begin{quote}
(H2) \quad $G$ has a component $L$ such that $L/Z(L)$ is isomorphic to $L_2(2^3)$ ($p = 3$), $U_3(2^3)$ ($p = 3$) or $\Sz(2^5)$ ($p = 5$).
\end{quote}
Then $G$ satisfies ($R$-QC).
\end{theorem}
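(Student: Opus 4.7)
The plan is to apply the homology propagation Lemma \ref{generalizedHomologyPropgation} with $H = LA$ and $K = C_G(LA)$, where $A \in \A_p(G)$ is chosen to act on $L$ as a field automorphism of order $p$ over $\mathbb{F}_2$. This follows the scheme described in the introduction for Theorem \ref{mainComponentsTheorem}, specialized to the three exceptional simple groups.

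First, I would carry out the usual reductions. Assuming $O_p(G) = 1$, Theorem \ref{mainTheorem} permits us to assume $O_{p'}(G) = 1$; Lemma \ref{lemmaCentralAndOmega1Reduction} permits us to assume $Z(G) = 1$ and $\Omega_1(G) = G$; and Theorem \ref{theoremAlmostSimpleCase} permits us to assume that $G$ is not almost simple. Then $F^*(G) = E(G)$ is a direct product of nonabelian simple components, $L$ is one of them (and $Z(L) = 1$ since $Z(G) = 1$), and $C_G(L)$ contains the remaining components, hence $C_G(L) \neq 1$. In each of the three exceptional cases $\Out(L)$ contains a cyclic subgroup of order $p$ generated by a field automorphism $\phi$ of $L$ over $\mathbb{F}_2$; moreover $C_L(\phi)$ equals $L_2(2)$, $U_3(2)$, or $\Sz(2)$ respectively (see \cite{GL83}), and so has order divisible by $p$. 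Since $\Omega_1(G) = G$, I can lift $\phi$ to an order-$p$ element $t \in N_G(L)$ and set $A = \langle t \rangle \in \A_p(G)$.

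Since $A$ acts faithfully on $L$, the pair $H = LA$ and $K = C_G(LA)$ satisfies Hypothesis \ref{hypothesisCP}: indeed $H \cap K \leq Z(H) = Z(LA)$, and $O_p(LA) = C_A(L) = 1$, so $Z(LA)$ is a $p'$-group. Following the strategy of the proof of Theorem \ref{mainTheorem}, I would choose $A$ of maximal $p$-rank subject to $O_p(K) = 1$ and form the subposet $X \subseteq \A_p(G)$ obtained by deleting those faithful elementary abelian $p$-subgroups $E \geq A$ for which $O_p(C_G(LE)) \neq 1$. Lemma \ref{lemmaRetract} then yields $X \simeq \A_p(G)$, while the choice of $A$ guarantees that $X$ is an $\N_K$-superset satisfying hypothesis (iv) of Lemma \ref{generalizedHomologyPropgation}. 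Hypothesis (v) follows from (H1)$_R$ applied to the proper subgroup $K < G$, using $O_p(K) = 1$.

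The main obstacle is verifying hypothesis (iii): producing a cycle $\alpha \in \tilde{Z}_*(\A_p(LA))$ supported on a full chain $a$ containing $A$, with the coefficient of $a$ invertible in $R$. To construct such $\alpha$, I would analyze the Sylow $p$-subgroup of $LA$ using the explicit structure of $L$ and of the field automorphism $\phi$, and invoke Theorem \ref{theoremPSolvableCaseQDp} applied to a suitable $p$-solvable subquotient of $LA$ (for instance built from $A$ together with a $p'$-subgroup of $C_L(\phi)$ acted upon faithfully by $A$). When $R = \ZZ$, the explicit cycle description of D\'iaz Ramos \cite{Diaz} secures the coefficient $\pm 1$ (cf. Remark \ref{remarkExtensionIntegers}). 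The case $L \cong U_3(2^3)$ is the most delicate since $L$ has $p$-rank $> 1$ and $\A_p(LA)$ has correspondingly richer structure than in the $L_2(2^3)$ and $\Sz(2^5)$ cases, so there one must work with more than a single field element in $A$ and examine the Sylow $3$-subgroup of $LA$ explicitly.
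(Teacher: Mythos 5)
Your proposal diverges from the paper's argument at the crucial point, and the divergence creates a gap. You take $H = LA$ with $A$ containing a field automorphism, and you then need hypothesis (iii) of Lemma \ref{generalizedHomologyPropgation}: a cycle $\alpha \neq 0$ in $\tilde{H}_m(\A_p(LA))$ with an invertible coefficient on some full chain $a$. But establishing nontrivial homology for $\A_p(LA)$ in the right degree is precisely the $(QD)_p$ obstruction that caused Aschbacher and Smith to exclude these components in the first place: the whole point of these three exceptional groups is that $(QD)_p$ fails to propagate nicely to extensions $L\gen{\phi}$. Your suggestion to invoke Theorem \ref{theoremPSolvableCaseQDp} on a $p$-solvable subquotient $NA$ of $LA$ does not close this; to lift $(QD)_p$ from $NA$ to $LA$ via Remark \ref{remarkQDpPropagationBySubgroups} you need $m_p(NA) = m_p(LA)$, which forces $A$ to have rank $m_p(LA) > 1$, contradicting your earlier choice of $A = \gen{t}$ of order $p$; and if you enlarge $A$ so that $A \cap L \neq 1$, the clean central-product decomposition $H = LA$, $K = C_G(LA)$ with $A$ "outer'' breaks down and you would have to re-verify all of (i)--(iv) from scratch. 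None of this is carried out.

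The paper avoids the issue entirely by taking $H = L$, not $H = LA$. Then $(QD)_p$ for $L$ is immediate: $m_p(L) = 1$ for $L_2(2^3)$ and $\Sz(2^5)$, and for $U_3(2^3)$ one checks $\A_3(L)$ is connected of dimension $1$. The real work is then to make a $(QD)_p$-exhibiting chain $a \in \A_p(L)'$ \emph{full} inside a suitable subposet $X \simeq \A_p(G)$. This is done by deleting from $\A_p(G)$ the elementary abelian $p$-subgroups $E$ acting faithfully on $L$ that contain a field automorphism; the key structural fact $O_p(C_L(\phi)) \neq 1$ (e.g.\ $C_L(\phi) \cong L_2(2), \PGU_3(2), \Sz(2)$) makes the links of these $E$ contractible, so Proposition \ref{propositionTheoremAPosets} preserves the homotopy type. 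Your proposal does mention removing points based on a contractibility criterion, but you extract points $E \geq A$ with $O_p(C_G(LE)) \neq 1$, which is a different and less relevant condition; the criterion that actually does the work is $O_p(C_L(\phi)) \neq 1$, and it is used to clear out field-automorphism subgroups sitting \emph{above} elements of $L$ so that the chain $a$ in $\A_p(L)$ becomes full in $X$. Also, your appeal to Lemma \ref{lemmaRetract} to get $X \simeq \A_p(G)$ is a misquote: that lemma yields $\A_p(G) \simeq \A_p(H)$ under a global condition on centralizers, whereas removing individual points requires Proposition \ref{propositionTheoremAPosets} applied to each extracted element. Finally, your instinct that $U_3(2^3)$ is the "most delicate'' case is backwards under the paper's approach; there $\A_3(L)$ is connected and $(QD)_3$ holds outright, so it is in fact the smoothest case.
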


The groups with these components were excluded in the analysis made by Aschbacher and Smith \cite{AS93}.
We summarize next the scheme of the proof of \cite[Main Theorem]{AS93} to see why these cases were excluded.
Consider a counterexample of minimal order $G$ subject to failing \QC, with $p > 5$ and the restriction on the unitary components.
Note that in particular $G$ satisfies (H1)$_\QQ$.
The proof splits in three steps.

The first step consists on showing \cite[Proposition 1.6]{AS93}, i.e. that $O_{p'}(G) = 1$.
They proved this invoking \cite[Theorems 2.3 \& 2.4]{AS93}.
These theorems, stated for $p$ odd, require that $G$ does not contain components isomorphic to $L_2(2^3)$, $U_3(2^3)$ or $\Sz(2^5)$ with $p = 3,3,5$ respectively.
By Theorem \ref{mainTheorem}, we can get the same reduction over $G$ because of (H1)$_\QQ$, without further restrictions on $p$ and the components.

The second step of the proof is \cite[Proposition 1.7]{AS93}, which eliminates the components of $G$ for which all their extensions satisfy $(QD)_p$.
In this step, \cite[Theorems 2.3 \& 2.4]{AS93} are invoked again.
By Theorem \ref{theoremExtensionOfExcludedCases}, we can extend this step of their proof to $p\geq 3$.

Finally, the third step consists on deriving a contradiction by computing the Euler characteristic of the fixed point subposet $\S_p(G)^Q$ in two different ways, where $Q$ is a $2$-hyperelementary $p'$-subgroup of $G$ (see \cite[p.490]{AS93}).
Here, \cite[Theorem 5.3]{AS93} is applied, which is stated for $p\geq 5$.
Theorem \ref{theoremExtensionOfExcludedCases} enables to extend this step to $p = 5$ and, consequently, \cite[Main Theorem]{AS93} to $p\geq 5$, but not to $p\geq 3$.
This proves Corollary \ref{mainExtensionToP5AS}.
See also the comment below \cite[Main Theorem]{AS93} and the remark at \cite[p.493]{AS93}.

These kind of components $L$ are the obstruction to showing $(QD)_p$ in \cite[Section 3]{AS93} because of the subgroups of the form $E = \gen{f}\times A$, where $f$ is a field automorphism of $L$ and $A = O_p(C_L(f))\neq 1$.
Here, $A$ exhibit $(QD)_p$ for $L$ but it is not maximal-faithful in $LE > L$ since $E$ is above $A$.
Nevertheless, the special feature $O_p(C_L(f)) \neq 1$ is exactly the ingredient we want to construct a proper subposet $X\subset \A_p(G)$.
This key feature allows us to extract these kind of subgroups $E$ from the poset $\A_p(G)$ and apply Lemma \ref{generalizedHomologyPropgation} with $H = L$.

Before we proceed with the proof of Theorem \ref{theoremExtensionOfExcludedCases}, we recall some elementary and useful facts.

\begin{remark}\label{remarkQDpPropagationBySubgroups}
Suppose that $H\leq G$ and $m_p(H) = m_p(G) = m$.
Then we have an inclusion in the top dimensional homology group $\tilde{H}_{m-1}(\A_p(H))\subseteq \tilde{H}_{m-1}(\A_p(G))$.
In particular, if $H$ has $(QD)_p$ then so does $G$.

On the other hand, if $L = L_1\times \ldots \times L_n$ is a direct product and each $L_i$ has $(QD)_p$ then $L$ has $(QD)_p$.
This follows from the homotopy equivalence $\A_p(L) \simeq \A_p(L_1) * \ldots * \A_p(L_n)$ of Proposition \ref{propJoin} and the homology decomposition of a join.
\end{remark}

Denote by $\Inn(G)$ the subgroup of $\Aut(G)$ of inner automorphisms of $G$.
If $\phi\in\Aut(G) - \Inn(G)$, we say that $\phi$ \textit{induces an outer automorphism} on $G$.
If $L\leq G$ and $E \leq N_G(L)$, then we can describe the types of automorphisms (inner or outer) induced by the action of $E$ on $L$ via the map $E\to \Aut(L)$.

\begin{lemma}\label{lemmaNoIntersectionIsOuter}
Let $L\leq G$ and $E\leq N_G(L)$.
Then
\[E\cap (L C_G(L)) = \{x\in E \tq x \text{ induces an inner automorphism on }L\}.\]
In particular, $E\cap (L C_G(L)) = 1$ if and only if $E$ acts by outer automorphisms on $L$.
\end{lemma}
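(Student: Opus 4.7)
The plan is to exploit the conjugation homomorphism $\phi\co N_G(L)\to\Aut(L)$, $\phi(x)(l)=x^{-1}lx$, and identify the preimage of $\Inn(L)$. Since $\Inn(L)$ is the image of $L$ under the canonical map $L\to\Aut(L)$, an element $x\in N_G(L)$ induces an inner automorphism on $L$ exactly when there exists $\ell\in L$ with $\phi(x)=\phi(\ell)$, i.e. $\phi(\ell^{-1}x)=\id_L$. But $\ker(\phi)=C_G(L)$ by definition of the centralizer, so $\ell^{-1}x\in C_G(L)$, that is, $x\in \ell C_G(L)\subseteq LC_G(L)$.

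Conversely, if $x=\ell c$ with $\ell\in L$ and $c\in C_G(L)$, then for every $m\in L$ we have $x^{-1}mx=c^{-1}\ell^{-1}m\ell c=\ell^{-1}m\ell$ since $c$ centralizes $L$. Thus $\phi(x)=\phi(\ell)\in\Inn(L)$, so $x$ does induce an inner automorphism. These two implications give $\phi^{-1}(\Inn(L))\cap N_G(L)=LC_G(L)$ (using that $LC_G(L)\subseteq N_G(L)$ automatically, since $L$ normalizes itself and $C_G(L)$ normalizes $L$). Intersecting with $E\le N_G(L)$ yields the first displayed equality of the lemma.

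For the ``in particular'' clause, recall that in our convention ``$E$ acts by outer automorphisms on $L$'' means every nontrivial element of $E$ maps to a nontrivial element of $\Out(L)$, equivalently the composition $E\hookrightarrow N_G(L)\xrightarrow{\phi}\Aut(L)\to\Out(L)$ is injective; its kernel is $E\cap\phi^{-1}(\Inn(L))=E\cap LC_G(L)$ by the first part, so injectivity is equivalent to $E\cap LC_G(L)=1$.

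I do not expect any genuine obstacle here; the only mild subtlety is being explicit that $LC_G(L)\subseteq N_G(L)$ so that the intersection with $E$ is literally the set of $x\in E$ inducing inner automorphisms, and taking care to state the ``outer action'' hypothesis in the right form (faithful image in $\Out(L)$) so that the equivalence with $E\cap LC_G(L)=1$ is immediate rather than requiring a separate triviality argument about the identity automorphism.
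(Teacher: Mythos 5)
Your proof is correct and takes essentially the same approach as the paper's: both identify $E\cap LC_G(L)$ with the preimage under conjugation of $\Inn(L)$, using $C_G(L)=\ker(N_G(L)\to\Aut(L))$ for the nontrivial inclusion. You have merely phrased the paper's brief argument more explicitly in terms of the homomorphism $\phi$.
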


\begin{proof}
Clearly $E\cap (L C_G(L))$ acts by inner automorphisms on $L$.
If $x\in E$ induces an inner automorphism on $L$,
then there exists $y\in L$ such that $z=y^{-1}x$ acts trivially on $L$.
Therefore, $z\in C_G(L)$ and $x = yz\in LC_G(L)$.
\end{proof}

\begin{remark}\label{remarkPropertiesComponents}
Suppose that $C_G(F^*(G)) = 1$ and $F^*(G) = E(G)$.
In particular, $Z(E(G)) = 1$.
Let $L$ be a component of $G$.
If $B\in \A_p(G)$ is such that $B\cap L\neq 1$, then $B\leq N_G(L)$.
This holds because if $b\in B$ then $L^b\cap L\geq B\cap L$ is nontrivial, and it forces to $L^b = L$ (see \cite[(31.7)]{AscFGT}).

On the other hand, if $N = O_p(C_G(L))$ and $K$ is a component of $G$ distinct to $L$, then $K$ is a component of $C_G(L)$ and hence, $[N,K] = 1$.
In consequence, $N$ commutes with every component of $G$, that is, $1 = [N,E(G)] = [N,F^*(G)]$.
Since $C_G(F^*(G)) = 1$, $O_p(C_G(L)) = N = 1$.
\end{remark}

Now we prove Theorem \ref{theoremExtensionOfExcludedCases}.

\begin{proof}[Proof of Theorem \ref{theoremExtensionOfExcludedCases}]
Suppose that $O_p(G) = 1$.
We prove the rational version \QC, that is $\tilde{H}_*(\A_p(G),\QQ)\neq 0$.
The same proof works for the integer version \ZQC{} (see Remark \ref{remarkExtensionIntegersComponents}).

Let $L$ be a component of $G$ as in (H2).
By Lemma \ref{lemmaCentralAndOmega1Reduction} and Theorem \ref{mainTheorem} we can suppose that $Z(G) = 1$ and $O_{p'}(G) = 1$.
Thus, $L\groupiso L_2(2^3)$, $U_3(2^3)$ or $\Sz(2^5)$, with $p = 3$, $3$ or $5$ respectively.
We use the structure of the centralizers of the automorphisms of $L$.
We refer to (7-2), (9-1), (9-3) of \cite[Part I]{GL83} for further details.

\vspace{0.2cm}

\begin{itemize}
\setlength\itemsep{0.7em}
\item If $L = L_2(2^3)$ then $\Aut(L)\groupiso L\rtimes \gen{\phi}$, where $\phi$ induces a field automorphism of order $3$ on $L$, and $C_L(\phi)\groupiso L_2(2) \groupiso \SS_3 \groupiso C_3\rtimes C_2$.

Since $m_p(L) = 1$, $L$ has $(QD)_p$.

\item If $L = U_3(2^3)$ then $\Out(L) = \Outdiag(L)\rtimes C_6$ and $\Outdiag(L) \groupiso C_3$.
If $\phi\in \Aut(L)$ is a field automorphism of order $3$, $C_{\Inndiag(L)}(\phi) = C_L(\phi) \groupiso \PGU_3(2) \groupiso ((C_3\times C_3) \rtimes Q_8)\rtimes C_3$.
In particular, field automorphisms and diagonal automorphisms do not commute.

Since $m_p(L) = 2$ and $\A_p(L)$ is connected by Theorem \ref{disconnectedCasesTheorem}, $L$ has $(QD)_p$.

\item If $L = \Sz(2^5)$ then $\Aut(L) \groupiso L\rtimes \gen{\phi}$, where $\phi$ induces a field automorphism of order $5$ on $L$, and $C_L(\phi)\groupiso \Sz(2) \groupiso C_5\rtimes C_4$.

Since $m_p(L) = 1$, $L$ has $(QD)_p$.
\end{itemize}

\vspace{0.2cm}

In any case, $L$ has $(QD)_p$ and if $\phi\in N_G(L)$ induces a field automorphism on $L$ then $O_p(C_L(\phi))\neq 1$.
Let $\N = \{E\in\A_p(N_G(L)) \tq E\cap (LC_G(L))\neq 1\}$.
Note that $O_p(C_G(L)) = 1$ by Remark \ref{remarkPropertiesComponents}.

\vspace{0.2cm}

\textbf{Case 1:} $\A_p(N_G(L)) = \N$.
This case follows exactly as in Theorem \ref{theoremPRank1Components}, except if $L \groupiso U_3(2^3)$.
In that case, take a chain $a\in \A_p(L)'$ exhibiting $(QD)_p$ for $L$ and apply Lemma \ref{generalizedHomologyPropgation} with $H = L$, $K = C_G(L)$ and $X = \A_p(G)$.

\vspace{0.2cm}

\textbf{Case 2:} $\A_p(N_G(L))\neq \N$.
Here, every $E\in \A_p(N_G(L)) - \N$ induces outer automorphisms on $L$ and $|E| = p$ (see Lemma \ref{lemmaNoIntersectionIsOuter}).
Suppose that $E$ induces field automorphisms on $L$.
In particular $O_p(C_L(E))\neq 1$.
We show that $\A_p(G)_{>E}$ is contractible.

Let $Y = \{B\in \A_p(G) : N_B(L) > E\} \subseteq \A_p(G)_{>E}$ and let $M = C_L(E)C_G(LE)$.
Take $B \in \A_p(G)_{>E} - Y$.
We show first that $Y_{>B}\simeq \A_p(C_M(B))$.
Let $C\in Y_{>B}$.
We prove that $N_C(L)\cap M\neq 1$, and in particular $C\cap M\neq 1$.
If $N_C(L)$ contains inner automorphisms of $L$ or acts non-faithfully, then $N_C(L)\cap M\neq 1$ by Lemma \ref{lemmaNoIntersectionIsOuter}.
If $N_C(L)$ acts faithfully on $L$ and without inner automorphisms, then $N_C(L)$ embeds into both $\Aut(L)$ and $\Out(L)$, and it has $p$-rank at least $2$ since $E < N_C(L)$.
However, $N_C(L)$ can only contain field automorphisms of $L$ since diagonal and field automorphisms (of order $p$) do not commute (see (9-3) of \cite{GL83}).
On the other hand, a subgroup of $\Out(G)$ containing only field automorphisms of $L$ is cyclic of order $p$, a contradiction.
We have a well-defined homotopy equivalence $C \in{Y}_{>B}\mapsto C\cap M\in \A_p(C_M(B))$, with inverse $C \mapsto CB$.
Therefore, ${Y}_{>B}\simeq \A_p(C_M(B))$.

Now we prove that ${Y}_{>B}$ is contractible by showing that $O_p(C_M(B))\neq 1$.
Decompose $B = E\tilde{B}$, where $\tilde{B}$ is a complement to $E$ in $B$.
Since $N_B(L) = E$, $\tilde{B}$ acts regularly on the set $\{L^b:b\in \tilde{B}\}$.
Let $K = \langle L^b\tq b\in \tilde{B}\rangle$.
It is not hard to see that $C_K(B) \groupiso C_L(E)$.
Finally, observe that $C_K(B)$ is a normal subgroup of $C_M(B)$, so $O_p(C_M(B))\neq 1$.
Therefore, $Y_{>B} \simeq \A_p(C_M(B))$ is contractible.
By Proposition \ref{propositionTheoremAPosets}, $Y \hookrightarrow \A_p(G)_{>E}$ is a homotopy equivalence.

By taking $B = 1$ in the above reasoning, $Y \simeq \A_p(M)$ is contractible since $1\neq O_p(C_L(E))\leq O_p(C_L(E)C_G(LE)) = O_p(M)$.
In consequence, $\A_p(G)_{>E}\simeq Y$ is contractible.

In combination with Proposition \ref{propositionTheoremAPosets}, we have shown that the subposet
$$X_0 = \{ E\in \A_p(G) \tq \text{ if  } |E| = p \text{ then it does not induce field automorphisms on } L\}$$
is homotopy equivalent to $\A_p(G)$.
Now we extract the remaining elementary abelian $p$-subgroups acting faithfully on $L$ and containing field automorphisms.
Let
\[X = \A_p(G) - \{E\in\A_p(N_G(L))\tq C_E(L) = 1 \text{ and $E$ contains some field automorphism of $L$}\}.\]
If $F\in X_0 - X$ then it must contain inner automorphisms of $L$ by the same reasoning above.
That is, $1\neq F\cap (LC_G(L))$ by Lemma \ref{lemmaNoIntersectionIsOuter}, and, moreover, $|F:F\cap (LC_G(L))| = p$.
Hence, ${X}_{< F} =  \A_p(F\cap (LC_G(L))) \simeq *$.
By Proposition \ref{propositionTheoremAPosets}, $X \simeq X_0 \simeq \A_p(G)$.

To conclude the proof, apply Lemma \ref{generalizedHomologyPropgation} with the subposet $X$, $H = L$, $K = C_G(L)$, and $a\in\A_p(L)'$ any chain exhibiting $(QD)_p$ for $L$.
\end{proof}

\section{Components of \texorpdfstring{$p$}{p}-rank \texorpdfstring{$1$}{1}}

In this section we show that, under (H1)$_R$, if $G$ has a component of $p$-rank $1$ then $G$ satisfies ($R$-QC), with $R =\ZZ$ or $\QQ$ (see Theorem \ref{theoremPRank1Components} below).
This proves Theorem \ref{mainComponentsTheorem} when combined with Theorem \ref{theoremExtensionOfExcludedCases} and Remark \ref{remarkExtensionIntegersComponents}.
We refer to (7-13) of \cite[Part I]{GL83} for the main properties on simple groups of $p$-rank $1$.
Recall that there are no simple groups of $2$-rank $1$.

\begin{theorem}\label{theoremPRank1Components}
Suppose that $G$ satisfies (H1)$_R$ and that:
\begin{quote}
(H2)\quad $G$ has a component $L$ such that $L/Z(L)$ has $p$-rank $1$.
\end{quote}
Then $G$ satisfies ($R$-QC).
\end{theorem}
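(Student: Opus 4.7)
The plan is to apply Lemma \ref{generalizedHomologyPropgation} with $H=L$ and $K=C_G(L)$, propagating the nontrivial $\tilde{H}_0$ of $\A_p(L)$ through the nontrivial reduced homology of $\A_p(C_G(L))$.

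I would first perform the standard reductions. By Lemma \ref{lemmaCentralAndOmega1Reduction}, Theorem \ref{mainTheorem}, and Theorem \ref{theoremAlmostSimpleCase} together with (H1)$_R$, we may assume $Z(G)=1$, $\Omega_1(G)=G$, $O_{p'}(G)=O_p(G)=1$, and $G$ is not almost simple. Under these, $F^*(G)=E(G)$ has trivial centre, $L$ is a simple non-abelian component with $m_p(L)=1$, and $C_G(L)<G$ is a proper subgroup with $O_p(C_G(L))=1$ by Remark \ref{remarkPropertiesComponents}, so $\tilde{H}_*(\A_p(C_G(L)))\neq 0$ by (H1)$_R$. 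Hypothesis \ref{hypothesisCP} holds because $L\cap C_G(L)=Z(L)=1$. Moreover, since $L$ is simple with cyclic (non-normal) Sylow $p$-subgroup, $\A_p(L)$ is a discrete poset with at least two elements, so $L$ has $(QD)_p$; choosing distinct $A,A'\in\A_p(L)$, the element $\alpha:=[A]-[A']\in\tilde{Z}_0(\A_p(L))$ is a nonzero cycle in which the chain $a:=(A)$ appears with coefficient $1$, and $a$ is automatically full in any subposet of $\A_p(G)$ containing $A$ because $|A|=p$.

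Consider the subposet $X:=\A_p(L)\cup\N(C_G(L))\subseteq\A_p(G)$. Then $\N(C_G(L))\subseteq X$, and for any $F>A$ with $F\in X$, the condition $m_p(L)=1$ prevents $F\leq L$, so $F\in\N(C_G(L))$; hence hypotheses (ii) and (iv) of Lemma \ref{generalizedHomologyPropgation} are satisfied, while (i), (iii), and (v) were verified above. The lemma then yields $\tilde{H}_*(X)\neq 0$.

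The main obstacle is promoting this to $\tilde{H}_*(\A_p(G))\neq 0$, i.e., establishing $X\homotequiv\A_p(G)$. The elements $F\in\A_p(G)\setminus X$ are those with $F\not\leq L$ and $F\cap C_G(L)=1$, namely elementary abelian $p$-subgroups acting faithfully on $L$ with a nontrivial outer $p$-component; by Lemma \ref{lemmaLinksInflation}, the link of such an $F$ in $X$ is homotopy equivalent to $\A_p(C_G(LF))$. A case analysis, based on the classification of simple groups of $p$-rank $1$ (see \cite[(7-13)]{GL83}) together with the structure of the fixed subgroup $C_L(\phi)$ of an outer $p$-automorphism $\phi$ of $L$, should yield that each such link is contractible (using that $A\leq C_L(F)$ sits inside a cyclic Sylow $p$-subgroup of $L$), so that each $F$ may be extracted via Proposition \ref{propositionTheoremAPosets} without altering the homotopy type. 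Once $X\homotequiv\A_p(G)$ is obtained, Lemma \ref{generalizedHomologyPropgation} yields the conclusion $\tilde{H}_*(\A_p(G))\neq 0$.
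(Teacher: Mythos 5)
Your reductions and the verification of hypotheses (i)--(iii) and (v) of Lemma \ref{generalizedHomologyPropgation} are fine, as is the observation that the single-element chain $a = (A)$ is automatically full and that $X_{>A}\subseteq\N(C_G(L))$. The genuine gap is the claimed homotopy equivalence $X\simeq\A_p(G)$ for $X = \A_p(L)\cup\N(C_G(L))$, which you correctly identify as the main obstacle but then dismiss too quickly. Take $E\in\A_p(G)\setminus X$ of order $p$ inducing a pure outer automorphism on $L$ (so $E\cap LC_G(L)=1$). Its upper link in $X$ is, by Lemma \ref{lemmaLinksInflation}, $\A_p(C_{C_G(L)}(E)) = \A_p(C_G(LE))$ -- this is a centralizer inside $C_G(L)$, and your parenthetical appeal to the structure of $C_L(\phi)$ has no bearing on it. There is no reason for $O_p(C_G(LE))$ to be nonzero; indeed when $O_p(C_G(LE)) = 1$, applying (H1)$_R$ to $C_G(LE)<G$ shows $\A_p(C_G(LE))$ has nontrivial homology, so the link is provably \emph{not} contractible and $E$ cannot be removed via Proposition \ref{propositionTheoremAPosets}. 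So the case analysis you wave at does not close: it produces exactly the bad case.

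The paper's proof handles this by changing the pair $(H,K)$ rather than forcing the extraction. When some order-$p$ outer $E$ has $O_p(C_G(LE)) = 1$, it takes $H = LE$, $K = C_G(LE)$, uses that $\A_p(LE)$ is connected of dimension $1$ (hence $LE$ has $(QD)_p$), and applies Lemma \ref{generalizedHomologyPropgation} with $X=\A_p(G)$. Only when $O_p(C_G(LE))\neq 1$ for every such $E$ does it keep $H=L$, and even then it removes only the rank-$2$ mixed subgroups $\F_1 = \{E : |E|=p^2,\ C_E(L)=1,\ E\cap L\neq 1\}$ (whose links are then contractible precisely because of that hypothesis), not all of $\A_p(G)\setminus(\A_p(L)\cup\N(C_G(L)))$. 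You also omit the exceptional components $L\cong L_2(2^3)$ ($p=3$) and $\Sz(2^5)$ ($p=5$), for which $\A_p(LE)$ is disconnected (Table \ref{tableStronglypEmbedded}) and the $(QD)_p$-for-$LE$ argument breaks down; the paper disposes of these first via Theorem \ref{theoremExtensionOfExcludedCases}. To repair your proof you would need to split into these cases rather than insist on $H=L$ with a single fixed $X$.
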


\begin{proof}
Similarly to Theorem \ref{theoremExtensionOfExcludedCases}, we can suppose that $Z(G) = 1$ and $O_p(G) = 1 = O_{p'}(G)$.
Hence $L$ is a simple group of $p$-rank $1$.
Recall that $p$ must be odd.
By Remark \ref{remarkPropertiesComponents}, $O_p(C_G(L)) = 1$.
Let $\N = \{E\in\A_p(N_G(L))\tq E\cap (L C_G(L))\neq 1\}$.
We split the proof in two cases.

\vspace{0.2cm}

\textbf{Case 1:} $\A_p(N_G(L))=\N$.
In this case, there are no outer automorphisms of order $p$ of $L$ inside $G$, and $\Omega_1(N_G(L)) \groupiso L\times \Omega_1(C_G(L))$.
Let $A\in \A_p(L)$.
If $B\in \A_p(G)_{>A}$, then $B\leq N_G(L)$ by Remark \ref{remarkPropertiesComponents} and hence, $B = AC_B(L)$.
Since $L$ has $p$-rank $1$, $A$ is a connected component of $\A_p(L)$ and it exhibits $(QD)_p$ for $L$.
The hypotheses of Lemma \ref{generalizedHomologyPropgation} are verified with $H = L$, $K = C_G(L)$, $a = (A)\in \alpha \in \tilde{C}_0(\A_p(L))$ and $X = \A_p(G)$.

\vspace{0.2cm}

\textbf{Case 2:} $\A_p(N_G(L))\neq \N$.
Here, every $E\in \A_p(N_G(L)) - \N$ induces outer automorphisms on $L$ by Lemma \ref{lemmaNoIntersectionIsOuter}, and has order $p$ since $m_p(L) = 1$ (see Table \ref{tableStronglypEmbedded}).
By Theorem \ref{theoremExtensionOfExcludedCases}, we can suppose that $L$ is not isomorphic to $L_2(2^3)$ ($p = 3$) nor to $\Sz(2^5)$ ($p = 5$).
Therefore, $m_p(LE) = 2$ and $LE$ has $(QD)_p$ (i.e. it is connected, see Table \ref{tableStronglypEmbedded}).
Let $K = C_G(LE)$ and suppose that $O_p(K) = 1$.
Take $A\in \A_p(LE)$ of order $p^2$ exhibiting $(QD)_p$ for $LE$, and observe that $LA = LE$ and $|A\cap L| = p$.
If $B\in \A_p(G)_{>A}$ then $B\cap L\neq 1$ by Remark \ref{remarkPropertiesComponents} and hence, $B \in \A_p(N_G(L))$.
Moreover, $C_B(L)\neq 1$ since $A\leq B/C_B(L)\leq \Aut(L)$, and the later has $p$-rank $2$.
In consequence, $B = AC_B(L)$.
Now apply Lemma \ref{generalizedHomologyPropgation} with $H = LE$, $K = C_G(LE)$, $A\in a\in \alpha\in \tilde{C}_1(\A_p(LA))$ exhibiting $(QD)_p$, and $X = \A_p(G)$.

If $O_p(C_G(LE))\neq 1$ for every $E \in \A_p(N_G(L)) - \N$,
consider the subposets $\F_0 = \A_p(N_G(L)) - \N$, $\F_1 = \{E\in \A_p(N_G(L)) \tq |E| = p^2, C_E(L) = 1, E\cap L\neq 1\}$ and $X = \A_p(G) - \F_1$.
We show that $X\simeq \A_p(G)$.
If $E\in \F_1$, then $E = (E\cap L)E_0$, where $E_0 \cap LC_G(L) = 1$.
Hence $E_0\in \F_0$ and $O_p(C_G(LE)) = O_p(C_G(LE_0)) \neq 1$.
Let $B\in {X}_{>E}$.
Since $m_p(\Aut(L)) = 2$ and $E$ acts faithfully on $L$ with $B\cap L\geq E\cap L\neq 1$, we have that $B\leq N_G(L)$, $C_B(L)\neq 1$ and $B = EC_B(L)$.
Therefore, ${X}_{>E}\simeq \A_p(C_G(LE))$, where the homotopy equivalence is given by $B\mapsto C_B(L)$ with inverse $C\mapsto CE$.
By Proposition \ref{propositionTheoremAPosets}, $X\simeq \A_p(G)$.
Finally, appeal to Lemma \ref{generalizedHomologyPropgation} on this subposet $X$, with $H = L$, $K = C_G(L)$ and $a = (A)\in \alpha \in \tilde{C}_0(\A_p(L))$, where $A\in \A_p(L)$.
This shows that $G$ satisfiers \QC.
For the integer version \ZQC, see Remark \ref{remarkExtensionIntegersComponents} below.
\end{proof}

\begin{remark}\label{remarkExtensionIntegersComponents}
In the proofs of Theorems \ref{theoremExtensionOfExcludedCases} and \ref{theoremPRank1Components}, we invoked Lemma \ref{generalizedHomologyPropgation} with some cycle $\alpha$ containing an arbitrary full chain $a$.
Note that we could have chosen first $\alpha$ and then $a\in \alpha$ in these proofs.
Since $\alpha$ is contained either in $\tilde{C}_1(X)$ or in $\tilde{C}_0(X)$, it can be taken to have coefficients equal to $\pm 1$.
For example, if $\alpha\in \tilde{C}_1(X)$, then pick $\alpha$ to be a simple cycle (that is, a cycle in the $1$-skeleton of the simplicial complex that does not self-intersect).
By Remark \ref{remarkExtensionIntegers}, Theorems \ref{theoremExtensionOfExcludedCases} and \ref{theoremPRank1Components} extend to the integer version of the conjecture \ZQC, and therefore, so does Theorem \ref{mainComponentsTheorem}.
\end{remark}

\section{The \texorpdfstring{$p$}{p}-rank \texorpdfstring{$4$}{4} case of Quillen's conjecture}\label{sectionsPRank4}

In this section we prove Theorem \ref{mainCoroPRank4}, establishing \QC{} for groups of $p$-rank at most $4$.
We use the results of the previous sections together with the classification of groups with a strongly $p$-embedded subgroup.
We provide in Appendix \ref{sectionAppendix} further details of this classification, as well as some properties of these groups.
We will see that the structure of the centralizers of the simple groups of low $p$-rank plays a fundamental role in the proof of this theorem.

The following elementary remark will be useful in the proof of Theorem \ref{mainCoroPRank4}.

\begin{remark}\label{remarkNonTrivialOuter}
Suppose that $L$ is a normal subgroup of $G$ such that $Z(L)$ is a $p'$-group.
If every order $p$ element of $G$ induces an inner automorphism on $L$, then $\Omega_1(G) \leq LC_G(L)$ by Lemma \ref{lemmaNoIntersectionIsOuter}. 
In particular, $\A_p(G)=\A_p(\Omega_1(G)) \simeq \A_p(L) * \A_p(C_G(L))$ by Proposition \ref{propJoin} and Lemma \ref{propCenterReduction}.
\end{remark}

\begin{proof}[Proof of Theorem \ref{mainCoroPRank4}]
Let $G$ of $p$-rank at most $4$ and suppose that $O_p(G) = 1$.
We prove that $\tilde{H}_*(\A_p(G),\QQ) \neq 0$.
Without loss of generality, we can assume that $G$ satisfies (H1)$_\QQ$, and hence that the following conditions holds:
\begin{enumerate}
\item $G = \Omega_1(G)$ and $Z(G) = 1$, by Lemma \ref{lemmaCentralAndOmega1Reduction};
\item $m_p(G) = 4$, by Corollary \ref{coroStrongConjecturePRank3}.
\item $O_{p'}(G) = 1$, by Theorem \ref{mainTheorem}.
\item $F^*(G) = L_1 \ldots L_n$ is the direct product of simple components $L_i$ of order divisible by $p$, by Remark \ref{remarkFittingDirectProduct}.
\item $G$ is not an almost simple group, by Theorem \ref{theoremAlmostSimpleCase}.
\item Every component of $G$ has $p$-rank at least $2$, by Theorem \ref{theoremPRank1Components}.
\end{enumerate}
Since $G$ is not almost simple, $n\geq 2$.
By (6), $m_p(L_i)\geq 2$ for all $i$, and since $4\geq m_p(F^*(G))\geq 2n$, we conclude that $n = 2$, $m_p(L_1) = 2 = m_p(L_2)$ and $m_p(F^*(G)) = 4$.

If both $\A_p(L_1)$ and $\A_p(L_2)$ are connected, then $L_1$ and $L_2$ have $(QD)_p$, and so does $F^*(G)$ and $G$ by Remark \ref{remarkQDpPropagationBySubgroups}.
Indeed, more generally, this argument shows the following case.

\vspace{0.2cm}

\textbf{Case 0:} if $G$ contains a direct product of distinct subgroups $G_1$, $G_2$ of $p$-rank $2$ for which $\A_p(G_1)$ and $\A_p(G_2)$ are connected with $O_p(G_i) = 1$, $i=1,2$, then $\tilde{H}_*(\A_p(G),\QQ)\neq 0$.

\vspace{0.2cm}

In consequence, we can suppose that $\A_p(L_1)$ is disconnected, i.e. that $L_1$ has a strongly $p$-embedded subgroup.
The possibilities for such $L_1$ are described in Theorem \ref{disconnectedCasesTheorem}.
In particular, if $p = 2$ then $L_1$ is isomorphic either to $L_2(2^2) \groupiso \AA_5$ or $U_3(2^2)$ by a $p$-rank argument (see Table \ref{tableStronglypEmbedded}).

On the other hand, by Remark \ref{remarkNonTrivialOuter}, if $G$ has a normal component $L_i$ then we can suppose that $G$ contains an outer automorphism of $L_i$ of order $p$, so $p\mid |\Out(L_i)|$.
Therefore, if $p$ is odd, both $L_i$ are normal in $G$ and it has to be that $p = 3$ and $L_1\groupiso L_3(2^2)$ by Table \ref{tableStronglypEmbedded}.

\vspace{0.2cm}

\textbf{Case 1:} $p = 2$ and $L_1 \groupiso \AA_5$ or $U_3(2^2)$.

\vspace{0.2cm}

If $f$ is an outer involutions of $\AA_5$ (resp. $U_3(2^2)$), then $C_{\AA_5}(f) \groupiso \SS_3$ of $2$-rank $1$, (resp. $C_{U_3(2^2)}(f)\groupiso\AA_5$ of $2$-rank $2$).
Both centralizers have disconnected Quillen's poset at $p=2$.
Moreover, $\Aut(\AA_5) =\SS_5 \groupiso \AA_5 \rtimes C_2$ and $\Aut(U_3(2^2)) = U_3(2^2)\rtimes C_4$, with $C_4$ inducing field automorphisms on $U_3(2^2)$.

We split the proof in two cases: when $L_1,L_2$ are permuted, and when they are normal in $G$.

\vspace{0.2cm}

\textbf{Case 1a:} some involution $x\in G$ permutes $L_1$ with $L_2$.

Then $N_G(L_1) = N_G(L_2)$ and it is a normal subgroup of $G$ of $p$-rank $4$ with $G = N_G(L_1)X$, where $X = \gen{x}$.
If $N_G(L_1)$ induces no outer automorphism of order $2$ on $L_1$, then $N_G(L_1) = L_1\times L_2$ and hence, $G\groupiso L_1\wr C_2$.
In this case, $\pi_1(\A_2(G))$ is a nontrivial free group by \cite[Theorem 5.6]{MP19} and therefore $\tilde{H}_1(\A_2(G),\QQ)\neq 0$.

Now assume that $N_G(L_1)$ induces some outer automorphism, say $f$, of order $2$ on $L_1$.
This eliminates $L_1\groupiso U_3(2^2)$ since $C_{L_1}(f)$ has $2$-rank $2$, which implies $m_2(N_G(L_1)) \geq 5$.
Hence $L_1\groupiso \AA_5$.
Now, $\AA_5\gen{f} \groupiso \SS_5$, which has $2$-rank $2$ and $(QD)_2$ (since $\A_2(\SS_5)$ is connected).
By Case 0 above, $N_G(L_1) = (L_1\times L_2) \gen{f}$ and $f$ induces an outer automorphism on both $L_1$ and $L_2$.
Then the subposet
\[\mathfrak{i}(\A_2(G)) := \{E\in \A_2(G) \tq E \text{ is the intersection of maximal elements of }\A_2(G)\}\]
has dimension $2$ (rather than dimension $3$ of $\A_2(G)$ itself).
This can be proved by using a similar argument to that of \cite[Examples 4.10 \& 4.11]{PSV}.
Finally, Corollary \ref{coroExtension2Dimensional} applies since $\K(\mathfrak{i}(\A_2(G)))$ is a $G$-invariant subcomplex of $\K(\S_2(G))$ and homotopy equivalent to $\K(\A_2(G))$.

\vspace{0.2cm}

\textbf{Case 1b:} $L_1$ is normal in $G$ (hence $L_2$ is also normal in $G$).

By Remark \ref{remarkNonTrivialOuter}, we may assume that:

\vspace{0.2cm}
\textbf{(H3)} Both components $L_1$ and $L_2$ admit nontrivial outer automorphisms from $G$.
\vspace{0.2cm}

Let $H = L_1 C_G(L_1)$, $\N := \N(H)$ and $\F:=\A_2(G) - \N$.
By (H3), $\F$ is non-empty, and by Proposition \ref{lemmaRetract}, we can suppose that some $E\in \F$ has $1 = O_2(C_H(E)) = O_2(C_{L_1}(E))O_2(C_G(L_1E))$.
Note that the elements of $\F$ have order $2$.

If $L_1\groupiso \AA_5$ and $E\in \F$, then $L_1E\groupiso \SS_5$, which has $(QD)_2$.
Fix $E\in \F$ with $O_2(C_G(L_1E)) = 1$ and take $A\in\A_2(L_1E)$ exhibiting $(QD)_2$ for $L_1E$.
Then $L_1E = L_1A$ and $O_2(C_G(L_1A)) = O_2(C_G(L_1E)) = 1$.
The hypotheses of Lemma \ref{generalizedHomologyPropgation} can be checked with $H = L_1E$, $K=C_G(L_1E)$ and $A$ exhibiting $(QD)_2$ for $L_1E$, so $\tilde{H}_*(\A_2(G),\QQ)\neq 0$.

Suppose now that $L_1\groupiso U_3(2^2)$ and $L_2\not\groupiso \AA_5$.
By Case 0 and (H3), we may assume that some involution $f\in G$ induces outer automorphisms on $L_1$ and $L_2$ simultaneously.
Since $C_{L_1}(f)$ has $2$-rank $2$, we conclude that $C_{L_2}(f)$ has $2$-rank $1$.
This forces to $L_2\groupiso L_2(q)$, with $q \geq 5$ odd and $f$ inducing diagonal automorphisms on $L_2$,
by the classification of simple groups of $2$-rank $2$ (see \cite[Theorem 48.1]{AscFGT}).
Moreover, if $\phi \in G$ is a field automorphisms of $L_2$ then $C_{L_2}(\phi)\groupiso L_2(q^{1/2})$ has $2$-rank $2$, which leads to $m_2((L_1L_2)\gen{\phi}) = 5$, a contradiction.
In conclusion, $G$ does not contain field automorphisms of $L_2$ and therefore, $G\leq \Aut(L_1) \times \Inndiag(L_2)$.

By Theorem \ref{disconnectedCasesTheorem}, $\A_2(L_2)$ is connected.
That is, $L_2$ has $(QD)_2$ exhibited by some $A\in\A_2(L_2)$ (of $2$-rank $2$).
Then $O_2(C_G(L_2A)) = O_2(C_G(L_2)) = 1$ by Remark \ref{remarkPropertiesComponents}, and if $B \in \A_2(G)_{>A}$ then $B/C_B(L_2)\leq \Inndiag(L_2)$, which has $2$-rank $2$.
Hence $C_B(L_2)\neq 1$ and $B = AC_B(L_2)$.
By Lemma \ref{generalizedHomologyPropgation} applied to $H = L_2$ and $K = C_G(L_2)$, we get $\tilde{H}_*(\A_2(G),\QQ)\neq 0$.

\vspace{0.2cm}

\textbf{Case 2:} $p = 3$ and $L_1\groupiso L_3(2^2)$.

Note that $\Out(L_3(2^2)) \groupiso D_{12} \groupiso C_3 \rtimes (C_2 \times C_2)$ and $\Inndiag(L_3(2^2)) \groupiso L_3(2^2)\rtimes C_3$, so without loss of generality $G\leq \Inndiag(L_3(2^2)) \times \Aut(L_2)$.
By Proposition \ref{propJoin} and the almost simple case of the conjecture, $G$ is not a direct product of almost simple groups.
Hence, there exists $C\in \A_p(G) - \A_p(L_1L_2)$ of order $3$ inducing diagonal automorphisms on $L_1\groupiso L_3(2^2)$.
Note that $L_1C\groupiso(L_1L_2)C / L_2 \groupiso \Inndiag(L_3(2^2))$.
If $C$ does not induce outer automorphisms on $L_2$, then $C\leq L_2C_G(L_2)$ and $G$ contains the normal subgroup $\Inndiag(L_3(2^2))$.
This implies that $C_G(L_2) = \Inndiag(L_3(2^2))$, so $G$ is the direct product of $\Inndiag(L_3(2^2))$ by some almost simple group $T\leq \Aut(L_2)$ with $F^*(T) = L_2$, a contradiction.
Therefore $C$ also induces outer automorphisms on $L_2$.

Recall that $C_{L_3(2^2)}(C)\groupiso \AA_5$ or $C_7\rtimes C_3$, both of $3$-rank $1$.
Since $\A_3(\Inndiag(L_3(2^2)))$ is connected (not simply connected) of dimension $1$ by Table \ref{tableStronglypEmbedded}, there exists $D\in\A_3(L_1C)$ of $3$-rank $2$ exhibiting $(QD)_3$ for $L_1C$.
Note that $L_1C = L_1D$.
If $O_3(C_G(L_1C)) = 1$, then $\tilde{H}_*(\A_3(G),\QQ)\neq 0$ by Lemma \ref{generalizedHomologyPropgation} applied with $X = \A_3(G)$, $H = L_1D$ and $K = C_G(L_1D) = C_G(L_1C)$.
On the other hand, if $O_3(C_G(L_1C))\neq 1$ for any choice of $C$, then let $H = L_1 C_G(L_1)$ and $\N =\N(H)$.
The subposet $\F:=\A_3(G) - \N$ consists of order $3$ subgroups acting by diagonal automorphisms on $L_1\groupiso L_3(2^2)$.
By Lemma \ref{lemmaRetract}, $\A_3(G)\simeq \A_3(H)$, so $\tilde{H}_*(\A_p(G),\QQ) = \tilde{H}_*(\A_p(H),\QQ)\neq 0$ by (H1)$_\QQ$.

This concludes the proof of the $p$-rank $4$ case.
\end{proof}

\appendix
\section{Groups with a strongly \texorpdfstring{$p$}{p}-embedded subgroup}\label{sectionAppendix}

In this appendix, we summarize some of the main results on the classification of the groups with a strongly $p$-embedded subgroup, so that the reader can consult them directly from here.
For further details see \cite{Asc93,GL83}.

Recall that a finite group $G$ has a \textit{strongly $p$-embedded subgroup} if there exists a proper subgroup $M < G$ such that $M$ contains a Sylow $p$-subgroup of $G$ and $M\cap M^g$ is a $p'$-group for all $g\in G-M$.
By \cite[Proposition 5.2]{Qui78}, $G$ has a strongly $p$-embedded subgroup if and only if $\A_p(G)$ is disconnected.
In the following theorem we state the classification of the groups with this property.

\begin{theorem}[{\cite[(6.1)]{Asc93}}]\label{disconnectedCasesTheorem}
The finite group $G$ has a strongly $p$-embedded subgroup (i.e. $\A_p(G)$ is disconnected) if and only if either $O_p(G) = 1$ and $m_p(G) = 1$, or $\Omega_1(G) / O_{p'}(\Omega_1(G))$ is one of the following groups:
\begin{enumerate}
\item Simple of Lie type of Lie rank $1$ and characteristic $p$,
\item $\AA_{2p}$ with $p \geq 5$,
\item $\Aut(L_2(2^3))$, $L_3(2^2)$ or $M_{11}$ with $p = 3$,
\item $\Aut(\Sz(2^5))$, ${}^2F_4(2)'$, $\McL$, or $\Fi_{22}$ with $p = 5$,
\item $J_4$ with $p = 11$.
\end{enumerate}
\end{theorem}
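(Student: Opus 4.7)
The plan is to prove the theorem in two steps: first, perform the standard group-theoretic reductions which peel off the trivial structural layers, and second, reduce to an almost-simple setting where the CFSG-based classification of simple groups with a strongly $p$-embedded subgroup does the remaining work.

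The parenthetical equivalence between the existence of a strongly $p$-embedded subgroup and disconnectedness of $\A_p(G)$ is \cite[Proposition 5.2]{Qui78}, so I focus on classifying $G$ with $\A_p(G)$ disconnected. Two basic reductions: since every $p$-subgroup is generated by its elements of order $p$, $\A_p(G) = \A_p(\Omega_1(G))$, so I may replace $G$ by $\Omega_1(G)$; and if $N := O_{p'}(G) \neq 1$, then the assignment $E \mapsto EN/N$ is a homotopy equivalence $\A_p(G) \simeq \A_p(G/N)$ via Quillen's fibre lemma (disconnectedness is invariant), so I may further assume $O_{p'}(G) = 1$. Under these reductions, disconnectedness also forces $O_p(G) = 1$: if $Q := O_p(G) \neq 1$ and $M < G$ is strongly $p$-embedded, then $Q$ lies in every Sylow $p$-subgroup, hence in $M \cap M^g$ for all $g$, so $M \cap M^g$ cannot be a $p'$-group for any $g \notin M$ — a contradiction.

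If $m_p(G) = 1$ we are in the first alternative. Otherwise $m_p(G) \geq 2$, and since $O_p(G) = O_{p'}(G) = 1$, Remark \ref{remarkFittingDirectProduct} yields $F^*(G) = E(G) = L_1 \cdots L_r$, a direct product of non-abelian simple components of order divisible by $p$. If $r \geq 2$, Proposition \ref{propJoin} gives $\A_p(F^*(G)) \simeq \A_p(L_1) * \cdots * \A_p(L_r)$, which is connected as a nontrivial join of non-empty posets, and one then propagates connectivity to $\A_p(G)$ using $C_G(F^*(G)) \leq F^*(G)$ together with the fact that each $E \in \A_p(G)$ either intersects $F^*(G)$ nontrivially or attaches to $\N(F^*(G))$ via Lemma \ref{lemmaLinksInflation} — contradicting disconnectedness of $\A_p(G)$. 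Hence $r = 1$, and we are in the almost-simple situation $L \leq G \leq \Aut(L)$ for a single simple $L$ with $p \mid |L|$.

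The remaining step, which is the main obstacle of the proof, is the classification of such almost-simple $G$ with $\A_p(G)$ disconnected. This is where the full weight of the CFSG enters: for Lie-type groups in defining characteristic $p$ one reduces to Lie rank one (where a Borel subgroup is strongly $p$-embedded), yielding item (1); for alternating groups one identifies $\AA_{2p}$ with $p \geq 5$ via the stabiliser of two disjoint $p$-cycles, yielding (2); for Lie-type groups in cross characteristic, the centraliser structure of a semisimple $p$-element contradicts strong $p$-embedding except in the small exceptional configurations listed; and the sporadic analysis pins down precisely the cases in (3)--(5). This case-by-case argument is carried out in \cite[(6.1)]{Asc93}, building on Bender's theorem and the CFSG; the converse direction is then verified group by group by exhibiting an explicit strongly $p$-embedded subgroup (Borel, two-point stabiliser, or a maximal $p$-local) in each listed family.
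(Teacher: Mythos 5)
This theorem is \emph{cited}, not proved, in the paper: it appears in Appendix \ref{sectionAppendix} with the reference \cite[(6.1)]{Asc93}, so there is no internal proof for you to reproduce or match. Your proposal, which outsources the classification step to Aschbacher's CFSG-based argument, is therefore the right thing to do for the hard part. However, the elementary reductions you carry out on the way contain a genuine error.

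You assert that for $N = O_{p'}(G) \neq 1$ the map $E \mapsto EN/N$ is a homotopy equivalence $\A_p(G) \simeq \A_p(G/N)$ by Quillen's fibre lemma, and deduce that disconnectedness is invariant under this quotient. Both halves of this are false. The fibre over $\bar{E}=EN/N$ is $\A_p(EN)$, and $EN$ can fail to have a nontrivial normal $p$-subgroup (when $E$ acts faithfully on $N$), so the fibre-lemma hypothesis of contractibility does not hold. Concretely, take $G = \AA_4$ and $p = 3$: then $N = O_{3'}(G)$ is the Klein four group, $\A_3(G)$ consists of four isolated points (disconnected), while $\A_3(G/N) = \A_3(C_3)$ is a single point (connected). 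So the map is not a homotopy equivalence and disconnectedness is \emph{not} preserved. The same phenomenon occurs whenever $m_p(G)=1$ and the Sylow $p$-subgroup acts faithfully on a nontrivial $O_{p'}(G)$; this is exactly why the theorem singles out the case ``$O_p(G)=1$ and $m_p(G)=1$'' as a separate alternative rather than folding it into the $\Omega_1(G)/O_{p'}(\Omega_1(G))$ analysis. To repair the argument you would need to treat the $m_p(G)=1$ case before passing to the quotient, and then justify the quotient step by the purely group-theoretic fact that, for $m_p(G)\geq 2$, $G$ has a strongly $p$-embedded subgroup if and only if $G/O_{p'}(G)$ does --- not by a homotopy-equivalence claim.

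A smaller remark: the propagation of connectivity from $\A_p(F^*(G))$ to $\A_p(G)$ when $r \geq 2$ needs more care than ``attaches via Lemma \ref{lemmaLinksInflation}''; you would have to show that $C_{F^*(G)}(E)$ has order divisible by $p$ for every $E$ with $E \cap F^*(G)=1$, which is true but requires an argument (fixed component versus regular orbit on components). In any case this, too, is part of the content of \cite[(6.1)]{Asc93} and there is no obligation to reprove it here.
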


In Table \ref{tableStronglypEmbedded} we summarize some properties on the almost simple groups listed in Theorem \ref{disconnectedCasesTheorem}.
For more details on these assertions, see \textsection7, \textsection9 and \textsection10 of \cite[Part I]{GL83}.

\begin{table}[h]
\centering
\begin{tabular}{|c|c|c|c|}
\hline
Group $G$ & $\Out(G)$ & $m_p(G)$ & $m_p(\Out(G))$ \\ 
\hline 
\multicolumn{4}{|c|}{$p$-rank $1$ almost simple groups}\\
\hline
$G$ & cyclic Sylow $p$-subgroups & $1$ & $\leq 1$\\
\hline 
\multicolumn{4}{|c|}{Lie type of Lie rank $1$ in characteristic $p$}\\
\hline
$L_2(p^a)$ & $C_{\gcd(2,p^a-1)}\rtimes C_a$ & $a$ & $m_p(C_a)\leq 1$\\
\hline
$U_3(p^a)$ & $C_{\gcd(3,p^a+1)}\rtimes C_{2a}$ & $\begin{cases}a & p = 2\\
2a & p\neq 2
\end{cases}$ & $m_p(C_{2a})\leq 1$\\
\hline
$\Sz(2^a)$, $a\geq 3$ odd & $C_a$ & $a$ & $0$\\
\hline
$\Ree(3^a)$, $a\geq 3$ odd & $C_a$ & $2a$ & $m_p(C_{a})\leq 1$\\
\hline 
\multicolumn{4}{|c|}{Alternating groups, $p\geq 5$}\\
\hline
$\AA_{2p}$ & $C_2$ & $2$ & $0$ \\
\hline 
\multicolumn{4}{|c|}{$p = 3$ exceptions}\\
\hline
$\Aut(L_2(2^3))$ & $1$ & $2$ & $0$ \\
\hline
$L_3(2^2)$ & $D_{12}$ & $2$ & $1$ \\
\hline
$M_{11}$ & $1$ & $2$ & $0$\\
\hline 
\multicolumn{4}{|c|}{$p = 5$ exceptions}\\
\hline
$\Aut(\Sz(2^5))$ & $1$ & $2$ & $0$\\
\hline
${}^2F_4(2)'$ & $C_2$ & $2$ & $0$ \\
\hline
$\McL$ & $C_2$ & $2$ & $0$\\
\hline
$\Fi_{22}$ & $C_2$ & $2$ & $0$\\
\hline
\multicolumn{4}{|c|}{$p = 11$ exception}\\
\hline
$J_4$ & $1$ & $1$ & $0$\\
\hline
\end{tabular} 
\caption{Properties of almost simple groups with a strongly $p$-embedded subgroup.}
\label{tableStronglypEmbedded}
\end{table}


\begin{thebibliography}{11111}
\bibitem[Alp90]{Alperin} J.L. Alperin. \textit{A Lie approach to finite groups}, Groups---Canberra 1989, Lecture Notes in Math., vol. 1456, Springer, Berlin, 1990, pp. 1-9.

\bibitem[Asc93]{Asc93} M. Aschbacher. \textit{Simple connectivity of $p$-group complexes}, Israel J. Math. \textbf{82} (1993), no. 1-3, 1-43.

\bibitem[Asc00]{AscFGT} M. Aschbacher. \textit{Finite group theory}, second ed., Cambridge Studies in Advanced Mathematics, vol. 10, Cambridge University Press, Cambridge, 2000, 274 pages.

\bibitem[AK90]{AK90} M. Aschbacher and P.B. Kleidman. \textit{On a conjecture of Quillen and a lemma of Robinson}, Arch Math. (Basel) \textbf{55} (1990), no. 3, 209-217.

\bibitem[AS93]{AS93} M. Aschbacher and S.D. Smith. \textit{On Quillen's conjecture for the $p$-groups complex}, Ann. of Math. (2) \textbf{137} (1993), no. 3, 473-529.

\bibitem[Bro75]{Brown} K.S. Brown. \textit{Euler characteristics of groups: the $p$-fractional part}, Invent. Math. \textbf{29} (1975), no. 1, 1-5.

\bibitem[DR18]{Diaz} A. D\'iaz Ramos. \textit{On Quillen's conjecture for $p$-solvable groups}, J. Algebra \textbf{513} (2018), 246-264.

\bibitem[GL83]{GL83} D. Gorenstein and R. Lyons. \textit{The local structure of finite groups of characteristic $2$ type}, Mem. Amer. Math. Soc. \textbf{42} (1983), no. 276, vii+731.

\bibitem[Gro02]{Gro1} J. Grodal. \textit{Higher limits via subgroup complexes}, Ann. of Math. (2) \textbf{155} (2002), no. 2, 405-457.

\bibitem[Gro16]{Gro2} J. Grodal. \textit{Endotrivial modules for finite groups via homotopy theory}, arXiv e-prints, page arXiv:1608.00499 (2016).

\bibitem[JM12]{Moller} M.W. Jacobsen and J.M. M{\o}ller. \textit{Euler characteristics and M\"{o}bius algebras of $p$-subgroup categories}, J. Pure Appl. Algebra \textbf{216} (2012), no. 12, 2665-2696.

\bibitem[MP18]{MP18} E.G. Minian and K.I. Piterman. \textit{The homotopy types of the posets of $p$-subgroups of a finite group}, Adv. Math. \textbf{328} (2018), 1217-1233.

\bibitem[MP19]{MP19} E.G. Minian and K.I. Piterman. \textit{The fundamental group of the $p$-subgroup complex}, J. London Math. Soc. (2) (2020). In press.

\bibitem[Pit19]{Pit19} K.I. Piterman. \textit{A stronger reformulation of Webb's conjecture in terms of finite topological spaces}, J. Algebra \textbf{527} (2019), 280-305.

\bibitem[PSV19]{PSV} K.I. Piterman, I. Sadofschi Costa and A. Viruel. \textit{Acyclic $2$-dimensional complexes and Quillen's conjecture}, Publicacions Matem\`atiques (2019). In press.

\bibitem[PW00]{PW} J. Pulkus and V. Welker. \textit{On the homotopy type of the $p$-subgroup complex for finite solvable groups}, J. Austral. Math. Soc. Ser. A \textbf{69} (2000), no. 2, 212–228.

\bibitem[Qui78]{Qui78} D. Quillen. \textit{Homotopy properties of the poset of nontrivial $p$-subgroups of a group}, Adv. in Math. \textbf{28} (1978), no. 2, 101-128.

\bibitem[Seg96]{Segev} Y. Segev. \textit{Quillen's conjecture and the kernel on components}. Comm. Algebra \textbf{24} (1996), no. 3, 955-962.

\bibitem[SW94]{SW} Y. Segev, P. Webb. \textit{Extensions of G-posets and Quillen's complex}, Journal of The Australian Mathematical Society \textbf{57} (1994), 60-75.


\bibitem[Smi11]{Smi11} S.D. Smith. \textit{Subgroup complexes}, Mathematical Surveys and Monographs, vol. 179, American Mathematical Society, Providence, RI, 2011.

\end{thebibliography}


\end{document}